\g@addto@macro\normalsize{%
  \setlength\abovedisplayskip{7pt}
  \setlength\belowdisplayskip{7pt}
  \setlength\abovedisplayshortskip{7pt}
  \setlength\belowdisplayshortskip{7pt}
}
\setlist{nolistsep} 	
\definecolor{Color1}{rgb}{0.0, 0.42, 0.47}
\definecolor{Color2}{rgb}{0.78, 0.11, 0.0}
\titlespacing*{\section}{0pt}{3.5ex plus 0ex minus 0ex}{1.5ex plus 0ex}
\titlespacing*{\subsection}{0pt}{3.5ex plus 0ex minus 0ex}{1.5ex plus 0ex}
\titlespacing*{\subsubsection}{0pt}{3.5ex plus 0ex minus 0ex}{1.5ex plus 0ex}
\renewenvironment{abstract}{
\begin{center}
{\bfseries \large\abstractname\vspace{\z@}}
\end{center}
\quotation
}
\newenvironment{acknowledgements}{
\begin{center}
{\bfseries \large Acknowledgments\vspace{\z@}}
\end{center}
}
\newtheoremstyle{plain}{3mm}{3mm}{\slshape}{}{\bfseries}{.}{.5em}{}
\newtheoremstyle{definition}{2mm}{2mm}{}{}{\bfseries}{.}{.5em}{}
\theoremstyle{plain}
\newtheorem{Theorem}{Theorem}
\newtheorem{Lemma}[Theorem]{Lemma}
\newtheorem{Proposition}[Theorem]{Proposition}
\newtheorem{Corollary}[Theorem]{Corollary}
\theoremstyle{definition}
\newtheorem{Definition}[Theorem]{Definition}
\newtheorem{Remark}[Theorem]{Remark}
\theoremstyle{plain} 
\newcounter{MainTheoremCounter}
\theoremstyle{plain}
\newtheorem*{namedthm}{\namedthmname}
\newcounter{namedthm}
\numberwithin{equation}{section}
\newcommand{\Erdos}{Erd\H{o}s}
\newcommand{\Turan}{Tur{\'a}n}
\newcommand{\N}{\mathbb{N}}
\newcommand{\Z}{\mathbb{Z}}
\newcommand{\R}{\mathbb{R}}
\newcommand{\Q}{\mathbb{Q}}
\newcommand{\define}[1]{{\itshape #1}}
\renewcommand{\epsilon}{\varepsilon}
\renewcommand{\leq}{\leqslant}
\renewcommand{\geq}{\geqslant}
\renewcommand{\setminus}{\backslash}
\renewcommand{\P}{\mathbb{P}}
\renewcommand{\subset}{\subseteq}
\renewcommand{\d}{~\mathrm{d}}
\newcommandx{\unsure}[2][1=]{\todo[linecolor=red,backgroundcolor=red!25,bordercolor=red,#1]{#2}}
\newcommandx{\change}[2][1=]{\todo[linecolor=blue,backgroundcolor=blue!25,bordercolor=blue,#1]{#2}}
\newcommandx{\info}[2][1=]{\todo[linecolor=OliveGreen,backgroundcolor=OliveGreen!25,bordercolor=OliveGreen,#1]{#2}}
\newcommandx{\improvement}[2][1=]{\todo[linecolor=Plum,backgroundcolor=Plum!25,bordercolor=Plum,#1]{#2}}
\newcommandx{\thiswillnotshow}[2][1=]{\todo[disable,#1]{#2}}
\author{By {\scshape Sai Sanjeev Balakrishnan}, {\scshape Félix Houde},\\ {\scshape Vahagn Hovhannisyan}, {\scshape Maryna Manskova} and {\scshape Yiqing Wang}}
\date{\small \today}
\title{\bfseries Arbitrarily long strings of consecutive primes in special sets}
\begin{document}

\maketitle

\begin{abstract}
    Let $F(x)$ be a function of the form $ \sum_{i=1}^r d_i x^{\rho_i}$ where $d_1,\ldots,d_r\in\R$, $0 \leq \rho_1 < \ldots < \rho_r,$ $\rho_r \not\in \Z,\rho_i \in \mathbb{R}$ for $ 1 \leq i \leq r$ and $d_r\not=0$. We prove that sets of the form $\{ n \in \N: \{ F(n) \} \in U \}$ for any non-empty open set $U \subset [0,1)$ contain arbitrarily long strings of consecutive primes.
\end{abstract}
\renewcommand{\thefootnote}{}
\footnote{2020 \emph{Mathematics Subject Classification}: Primary 11N05; Secondary 11N36.}
\footnote{\emph{Key words and phrases}: consecutive primes, Maynard sieve, equidistribution mod 1.}
\renewcommand{\thefootnote}{\arabic{footnote}}
\tableofcontents
\thispagestyle{empty}

\section{Introduction}

A natural question that arises in the study of the distribution of prime numbers is determining which sets $\mathcal{A} \subseteq \N$ contain arbitrarily long strings of consecutive primes $p_{k+1},p_{k+2},\dots,p_{k+m}$, where $p_n$ is the $n$th prime number in $\mathbb{N}$. When $\mathcal{A}$ is an arithmetic progression of the form $a\N+b$ with $\gcd(a,b)=1$, Shiu famously showed that $\mathcal{A}$ contains arbitrarily long strings of consecutive primes and gave a lower bound on the number of such strings in $(a\N+b) \cap [1,x)$ (see Theorem 1 and 2 of \cite{shiu2000strings}). 

The recent breakthroughs of Zhang \cite{zhang2014bounded} and Maynard (\cite{maynard2015gaps} and \cite{Maynard_clusters}) on small gaps between primes introduced new approaches to investigate these questions, that yielded a strengthening of Shiu's result in \cite[Theorem 3.3]{Maynard_clusters}. Using these methods, Shiu's result was further strengthened in \cite[Corollary 1.2]{Radomskii_2021}, where it was shown that there exist arbitrarily long strings of primes in $(a\N+b)$, such that the difference of the largest and the smallest prime in the string is bounded by a fixed number. Another interesting application of the method yielded the existence of arbitrarily long strings of consecutive primes having a fixed integer $g \neq -1$, not a square, as a primitive root modulo those primes (\cite[Theorem 4.1]{Pollack_2014}), conditional on the generalized Riemann hypothesis.

The existence of arbitrarily long strings of consecutive primes has also been investigated when $\mathcal{A}$ is a Beatty sequence satisfying certain conditions. For an irrational number $\alpha > 0$, the associated Beatty sequence is the set of integers
\begin{align*}
    \mathcal{A} = \{ \lfloor n \alpha \rfloor : n \in \N \},
\end{align*}
where $\lfloor x \rfloor$ is the largest integer less than or equal to $x$. Baker and Zhao showed that the sequence of primes in any Beatty sequence associated with an irrational $\alpha$ has bounded gaps by using a variant of Maynard's method (\cite[Theorem 2]{RogerC2016}). Similarly, in \cite[Theorem 1.1]{chua2015bounded}, the same result was shown when $\alpha$ is of finite type, and, in addition, the proof implies that Beatty sequences for such $\alpha$ contain arbitrarily long strings of consecutive primes (see the remark after the proof of \cite[Theorem 1.1]{chua2015bounded}).
Yet another interesting result along these lines is \cite[Corollary 1.2]{Monks_2013}, which shows that when $\alpha$ is of finite type, the consecutive primes can be chosen such that they all lie in a single arithmetic progression. Surprisingly, a proof that for all irrational $\alpha > 0$ the set $\{ \lfloor n \alpha \rfloor: n \in \N \}$ contains arbitrarily long strings of consecutive primes has not appeared explicitly in the literature. However, as we will discuss next, we believe an even more general statement can be derived by modifying the recent results of Shao and Teräväinen.

Observe that any Beatty sequence can be written as a Bohr set, i.e.,
\begin{align*}
    \{ \lfloor n \alpha \rfloor : n \in \N \} = \{ n \in \N : \{ \alpha^{-1}n \} \in (1-\alpha^{-1},1)  \},
\end{align*}
where we define $\{ y \} = y - \lfloor y \rfloor$ for a real number $y$. Therefore, a natural question is whether one can generalize results of the above kind to sets of the form
\begin{equation*}
    \mathcal{A} = \{ n \in \N : \{ F(n) \} \in U  \},
\end{equation*}
where $F$ belongs to a family of polynomials and $U$ is, say, an open subset of $[0,1)$.
Using a weighted variant of the Maynard sieve (see \cite[Theorem 6.2]{Matom_ki_2017}), Shao and Teräväinen showed that for every open set $U,$ the primes in the set $\mathcal{A}$ have bounded gaps whenever $F$ is a real polynomial with at least one irrational coefficient besides the constant term (see \cite[Theorem 2.3]{Shao2021Bombieri}). In this case, $\mathcal{A}$ belongs to the family of so-called Nil-Bohr sets. Their method shows that for every $m \geq 1$, there are infinitely many intervals of bounded length containing $m$ primes that lie inside the set $\mathcal{A}$, as noted in the footnote of \cite[page 46]{Shao2021Bombieri}. Although not mentioned explicitly in their paper, it seems that their method also shows that those primes can be taken to be consecutive. Indeed, under some additional hypotheses, Maynard's theorem \cite[Theorem 3.1]{Maynard_clusters} gives a lower bound on the number of strings of consecutive primes found in a set of integers, and we believe that this should also carry over to the weighted version \cite[Theorem 6.2]{Matom_ki_2017} used by Shao and Teräväinen.


Inspired by this, we prove a similar result for sets $\mathcal{A}$ of the form
\begin{equation*}
    \mathcal{A} = \{ n \in \N : \{ F(n) \} \in U  \},
\end{equation*}
where $U$ is an open set, and $F$ belongs to a family of polynomial-like functions that we refer to as \textit{polynomials with real exponents}.

\begin{Definition}
    A \define{polynomial with non-integer exponents}\footnote{In the literature one might find many other names for slightly different versions of these functions, \textit{e.g.} fractional polynomials, exponential polynomials, polynomials with real exponents, signomials, etc.} is a function of the form
    \begin{equation*}
        f(x) = \sum_{i=1}^r d_i x^{\rho_i}
    \end{equation*}
    where $r \geq 1$, $0 < \rho_1 < \ldots < \rho_r$, $\rho_i\not\in\mathbb Z, \rho_i \in \mathbb{R}$ and $d_i \in \R$ for $1 \leq i \leq r$. A \define{polynomial with real exponents} is a function $F(x)$ of the form $f(x)+P(x),$ where $P(x)$ is a real polynomial and $f(x)$ is a polynomial with non-integer exponents.
\end{Definition}

We now state our main result.
\begin{Theorem}\label{main_result}
    Let $F(x) = \sum_{i=1}^r d_i x^{\rho_i}$ with $0 \leq \rho_1 < \ldots < \rho_r$ be a polynomial with real exponents, with $\rho_r \not\in \Z$ and $d_r \neq 0$. Let $\mathcal{A} := \{ n \in \N : \{ F(n) \} \in U \}$ for some non-empty open set $U \subset [0,1)$. Then $\mathcal{A}$ contains arbitrarily long strings of consecutive primes. Moreover, if $p_1<p_2<p_3<\ldots$ is the increasing enumeration of the prime numbers, then, for every positive integer $m$, there exists a constant $K = K(F,m)$ such that there are infinitely many $n$ with $p_{n+1}, \ldots, p_{n+m} \in \mathcal{A}$ and $p_{n+m} - p_{n+1} \leq K$.
\end{Theorem}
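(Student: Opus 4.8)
\noindent\emph{Plan of proof.} The plan is to run the weighted Maynard--Tao sieve for the tuple problem, in the style of Shao and Ter\"av\"ainen \cite{shao2020bombieri}, letting $F$ enter only through a sieve weight. Fix $m$, and set $\ell=\max(\lfloor\rho_r\rfloor,\deg P)$, where $P$ is the polynomial part of $F$. The basic observation is the uniform Taylor expansion on a window $[n,n+H]$ ($H$ fixed): for all $0\leq t\leq H$,
\[
F(n+t)=\sum_{j=0}^{\ell}\frac{F^{(j)}(n)}{j!}\,t^{j}+O_{F,H}\bigl(n^{\{\rho_r\}-1}\bigr),
\]
so, with $c_j(n):=F^{(j)}(n)/j!$, the vector $\bigl(c_0(n),\dots,c_\ell(n)\bigr)\bmod 1$ is equidistributed in $\T^{\ell+1}$ with respect to an explicit measure $\mu$ whose support is a subtorus coset (a routine Weyl/van der Corput computation using $\rho_r\notin\Z$). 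Choose a level of distribution $\theta>0$ (constructed in the analytic step), let $k=k(m,\theta)$ be the parameter of the Maynard machinery, and pick an admissible $k$-tuple $\mathcal H=\{h_1<\dots<h_k\}$, taking $\mathcal H$ inside a suitable generalized-polynomial Bohr set whenever the polynomial part of $F$ makes this necessary, so that
\[
\kappa:=\mu\Bigl(\bigl\{\mathbf c\in\T^{\ell+1}:\textstyle\sum_{j=0}^{\ell}c_j h_i^{j}\in U\text{ for every }i\bigr\}\Bigr)>0 .
\]
This is always possible, since such Bohr sets have positive density, remain equidistributed along residue classes, and therefore contain admissible $k$-tuples for every $k$, just as in the Beatty-sequence case.

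Next, put $\beta(n):=\prod_{i=1}^{k}\mathbf 1_U\bigl(\{F(n+h_i)\}\bigr)$, fix $W=\prod_{p\leq D_0}p$ with $D_0\to\infty$ slowly and $\nu\bmod W$ with $(\nu+h_i,W)=1$ for all $i$, let $w(n)$ be the usual Maynard weight attached to $\mathcal H$ and supported on $n\equiv\nu\pmod W$, and consider $S_1=\sum_{x<n\leq 2x}\beta(n)w(n)^2$ and $S_2=\sum_{i=1}^{k}\sum_{x<n\leq 2x,\;n+h_i\text{ prime}}\beta(n)w(n)^2$. If $S_2>(m-1)S_1$ for infinitely many $x$, then for infinitely many $n$ we have $\beta(n)=1$ --- so \emph{all} $n+h_i$ lie in $\mathcal A$ --- and at least $m$ of the $n+h_i$ are prime; moreover the weighted sieve \cite[Theorem~6.2]{Matom_ki_2017} provides a lower bound for the number of such $n\leq x$. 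Expanding $w(n)^2$ into congruences to moduli $q\leq x^{\theta}$ coprime to $W$, and expanding $\beta$ by a Vaaler trigonometric approximation of $\mathbf 1_U$ of degree $(\log x)^{A}$, the required asymptotics for $S_1$ and $S_2$ reduce to two inputs: (i) the equidistribution of $(c_j(n))_{j\leq\ell}$ above, which is soft and produces the main terms with the positive factor $\kappa$; and (ii) for some $\theta,\delta>0$ depending only on $\rho_r$ and $\deg P$, the bound (with $e(t):=e^{2\pi i t}$)
\[
\sum_{\substack{p\leq x\\ p\equiv a\,(q)}}e\Bigl(\textstyle\sum_{i}\ell_i\,F(p+g_i)\Bigr)\;\ll\;x^{1-\delta},
\]
uniformly for $(a,q)=1$, $q\leq x^{\theta}$, integers $|\ell_i|\leq x^{\delta}$ not all zero, and fixed shifts $g_i$, \emph{except} when the frequency vector $(\ell_i)$ is ``resonant'', meaning that $\sum_i\ell_i F(\cdot+g_i)$ tends to a constant modulo $1$; a Taylor/Vandermonde computation shows that the resonant frequencies contribute only to the main term, and the Vaaler truncation error is $O\bigl(x(\log x)^{-A}\bigr)$. (The case $q=1$, $k=1$ of (ii) gives $\sum_{p\leq x}e(\ell F(p))=o(\pi(x))$ for every fixed $\ell\neq0$, so $\{F(p)\}_{p}$ is equidistributed in $[0,1)$ and $\mathcal A$ has density $|U|>0$.)

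Estimate (ii) is the heart of the matter, and the step I expect to be the main obstacle. Detecting the primality of $p=qt+a$ by a Vaughan or Heath--Brown identity reduces it to Type~I and Type~II sums in which the phase, as a function of the remaining smooth variable, comes from $t\mapsto\sum_i\ell_i F(qt+\dots+g_i)$; on the relevant ranges, after differentiating $O_F(1)$ times --- which kills the polynomial part and exposes the leading non-integer exponent of $f$ --- this phase has derivatives of a controlled, non-vanishing size, which is exactly the situation that ``non-resonant'' describes. The iterated van der Corput $r$th-derivative test, or Vinogradov's mean value theorem when $\rho_r$ or $\deg P$ is large, then delivers a power saving uniform in $q\leq x^{\theta}$ and in the remaining parameters, and summing trivially over $q$ preserves it provided $\theta<\delta/2$. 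The difficulty is that one must run this van der Corput / exponent-pair apparatus on phases whose relevant derivative sizes depend on both $q$ and the $\ell_i$, difference enough times to uncover the non-integer exponent, keep every error term under control uniformly in $q$ and $(\ell_i)$, and correctly identify which $(\ell_i)$ are resonant; that is where essentially all of the work lies. Everything else --- the sieve combinatorics \cite[Theorem~6.2]{Matom_ki_2017}, the Vaaler approximation, the Taylor expansion, the Weyl equidistribution, and the construction of an admissible tuple inside a Bohr set --- is standard or already in the literature.

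Finally, to upgrade ``at least $m$ primes among the $n+h_i$, all in $\mathcal A$'' into ``$m$ consecutive primes in $\mathcal A$ inside a bounded window'', impose on $\nu\bmod W$ the further conditions $\nu\equiv-j\pmod{\pi_j}$ for each $j\in\{1,\dots,H\}\setminus\mathcal H$, where the $\pi_j$ are distinct primes exceeding $H$ (there are only $O_{F,m}(1)$ of them and they are of size $O_{F,m}(1)$, so $\pi_j\mid W$ once $x$ is large); this is compatible with $(\nu+h_i,W)=1$ because $h_i\neq j$, and it forces every $n+j$ with $j\in\{1,\dots,H\}\setminus\mathcal H$ to be composite, exactly as in Shiu \cite{shiu2000strings} and in \cite[Theorem~3.3]{Maynard_clusters}. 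Hence for each of the infinitely many good $n$, the primes in $[n+1,n+H]$ are precisely the (at least $m$) primes among the $n+h_i$, they all lie in $\mathcal A$, and they span at most $H$. Taking any $m$ consecutive among them and setting $K=K(F,m):=H$ completes the proof.
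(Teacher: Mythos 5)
Your plan is sound and would prove the theorem, but it takes a genuinely different route from the paper's, so let me compare. The paper does not use the weighted sieve of \cite[Theorem 6.2]{Matom_ki_2017} at all: it applies Maynard's set-restricted \cite[Theorem 3.1]{Maynard_clusters} to an explicit set $\mathcal{B}=\{n:\{F(n+h_j)\}\in[0,\Delta),\,1\leq j\leq\ell\}$ with $\ell=\lfloor\rho_r\rfloor+1$, verifying its four hypotheses via Erd\H{o}s--Tur\'an--Koksma and the exponential-sum bounds; in particular the ``consecutive primes'' conclusion is imported wholesale from Maynard's theorem rather than re-derived, whereas you rebuild it by hand with the Shiu-type congruences $\nu\equiv-j\pmod{\pi_j}$ (which is exactly how Maynard proves that part of his theorem, so your derivation is legitimate and self-contained where the paper's is a citation). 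The second real difference is how the two arguments cope with the fact that only $\lfloor\rho_r\rfloor+1$ of the $k$ values $F(n+h_i)$ are ``independent'': you keep all $k$ coordinates, let the equidistribution measure live on a subtorus, and absorb the resonant frequency vectors $(\ell_i)$ with $\sum_i\ell_i g_i^j=0$ ($0\leq j\leq\ell$) into the main term; the paper instead chooses the shifts $h_{\ell+1},\dots,h_d$ in a specific progression (its Proposition 5.1) so that each $F(n+h_s)$ is an exact $\Z$-linear combination of $F(n+h_1),\dots,F(n+h_\ell)$ up to $o(1)$, shrinks $U$ to a small box in the first $\ell$ coordinates, and thereby never meets a resonant frequency. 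Your version buys flexibility in the choice of tuple (and your observation that the diagonal direction $c_0$ alone forces $\kappa>0$ means no Bohr-set gymnastics are actually needed); the paper's version buys a cleaner Fourier analysis, since every nonzero $\mathbf m$ then yields a phase with genuinely non-integer leading exponent. Finally, be aware that what you call the ``soft'' input (i) still requires power-saving cancellation uniformly in progressions $q\leq x^{\theta}$ (the paper's Section 3), though without primes it is indeed much easier than (ii). The estimate (ii) that you correctly single out as the heart of the matter, and leave as a sketch, is precisely what the paper's Section 4 carries out: a Vaughan-type decomposition (via \cite[Lemma 2.3]{bergelson2013uniform}) into Type I/II sums followed by iterated van der Corput differencing, uniform in $q\leq N^{\theta}$ and in frequencies of size $N^{\eta}$. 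So your sketch of the hard step is the right one, but as submitted it is a program rather than a proof of that step.
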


By passing to a subset of $\mathcal{A}$ if necessary, it follows from the theorem that one can take $U$ to be any subset of $[0,1)$ with a nonempty interior. The restriction $\rho_r\not\in\mathbb Z$ is technical and is explained in Section \ref{lastsection}.

\begin{acknowledgements}
This work was done as part of the 2023 edition of the Young Researchers in Mathematics Program. We would like to thank the Bernoulli Center for Fundamental Studies for funding and hosting the program. We are very grateful to Florian Richter for creating this opportunity and guiding us throughout the process. We would also like to thank Nadia Kaiser and Carole Weissenberger, who helped organize our stay in Lausanne. We also want to thank the anonymous reviewer for their careful reading of the paper and their useful comments on the presentation.

\end{acknowledgements}

\subsection{Some notation}
In what follows, we denote by $\N$ the set $\{1, 2, 3, \ldots \}$ of positive integers, and by $\P$ the set of all prime numbers. We use the letter $p$ in sums and products to indicate that they are over prime numbers. We let $(p_n)_{n \in \N}$ be the sequence of all prime numbers in increasing order. We let $\pi(N)$ be the number of prime numbers up to $N$, and, for $M \geq N$, we write $\pi(N,M) = \pi(M) - \pi(N)$. We write $\pi(N;a,q)$ for the number of prime numbers congruent to $a \bmod q$ up to $N$. Similarly, $\pi(N, M;a,q)$ denotes the number of primes between $N$ and $M$ that are congruent to $a \pmod q$. We write the greatest common divisor of some integers $a$ and $b$ as $(a,b)$. We write $\varphi(n)$ for Euler's totient function evaluated at $n$, and $\tau_j(n)$ for the number of ways to decompose $n$ as a product of $j$ positive integers. We also write $\Lambda(n)$ for the von Mangoldt function evaluated at $n$, that is
\begin{align*}
    \Lambda(n) = \begin{cases}
        \log p & \text{ if } n = p^k;\\
        0 & \text{ otherwise.}
    \end{cases}
\end{align*}

\noindent The indicator function of a set $S$ evaluated at $n$ is denoted by $\mathbf{1}_S(n)$. We shorten $e^{2\pi i x}$ as $e(x)$. We denote the $\ell$-dimensional Lebesgue measure of a measurable set $E \subset \R^{\ell}$ by $\lambda_{\ell}(E)$. We write $\langle \cdot , \cdot \rangle$ for the dot product of $\R^\ell$ and denote $\bar{0}\in\R^\ell$ the zero vector. For real-valued functions $F$ and $G$, we write $F(x) \ll G(x)$ or $F(X) = O(G(x))$ for $x \in S$ if there exists some constant $C>0$ such that $|F(x)| \leq C G(x)$ for all $x \in S$. We refer to the constant $C$ as the implied constant. If the constant $C$ depends on some parameters $\alpha, \beta, \ldots$, we write $F \ll_{\alpha, \beta, \ldots} G$ or $F = O_{\alpha, \beta, \ldots}(G)$. For $x \in \R$, we write $\lfloor x \rfloor$ and $\lceil x \rceil$ to denote the largest integer no larger than $x$ and the smallest integer no smaller than $x$ respectively. We write $\{ x \}$ for $x - \lfloor x \rfloor$. We also write $\lVert x \rVert$ for the distance between $x$ and the closest integer. We denote the cardinality of a set $I$ by $\# I$.

\section{Main ideas of the proof}\label{main_ideas}
The main ingredient of our proof of Theorem \ref{main_result} is a result of Maynard in \cite{Maynard_clusters} about bounded gaps between (consecutive) primes in sets satisfying some technical hypotheses. We first state a useful definition.
\begin{Definition}
    A family of linear polynomials $\{ L_1, \ldots, L_d \}$ with $L_i(n) = a_i n + b_i$, $a_i \in \N$, $b_i \in \N \cup \{ 0 \}$ is \define{admissible} if for every prime $p$, there exists some $n$ such that $\prod_{i=1}^d L_i(n) \not\equiv 0 \bmod{p}$. We say that set of integers $\{ h_1, \ldots, h_d \}$ is admissible if the family of linear polynomials $\{ n+ h_1, \ldots, n + h_d \}$ is admissible.
\end{Definition}

\begin{Theorem}[\cite{Maynard_clusters}, Theorem 3.1]\label{maynard}
    Let $\alpha > 0$ and $0 < \theta < 1$. Let $\mathcal{B}$ be a set of integers, $\mathcal{P}$ a subset of $\mathbb{P}$, and $\mathcal{L} = \{ L_1, \ldots, L_d\}$ an admissible family of $d$ linear polynomials, where $L_i(n) = a_i n + b_i$, $a_i \in \N$, $b_i \in \N \cup \{0\}$. Let $B$ and $N$ be positive integers. Assume that $B \leq N^\alpha$, $a_i, b_i \leq N^\alpha$ for $1 \leq i \leq d$, and $d \leq (\log N)^\alpha$. Then, there exists a constant $C$ depending only on $\alpha$ and $\theta$ such that the following holds when $d \geq C$:
    
    If there exists some $\delta > 1/\log d$ such that,
    \begin{enumerate}
        \item \label{H1} The set $\{a_i n + b_i : n \in \mathcal{B}, 1 \leq i \leq d\}$ contains a large enough proportion of primes:
        \[
         \frac{1}{d} \frac{\varphi(B)}{B} \sum_{L_i \in \mathcal{L}} \frac{\varphi(a_i)}{a_i} \sum_{\substack{N \leq n < 2N \\ a_i n + b_i \in \mathcal{P} }} \textbf{1}_{\mathcal{B}}(n) \geq \frac{\delta}{\log N} \sum_{N \leq n < 2N} \mathbf{1}_{\mathcal{B}}(n).
        \]
        \item \label{H2} $\mathcal{B}$ is well-distributed in arithmetic progressions:
        \[
        \sum_{\substack{q \leq N^{\theta} \\ (q,B) = 1}} \max_{0 \leq c \leq q-1} \left| \sum_{\substack{N \leq n < 2N \\ n\equiv c \bmod q}} \mathbf{1}_{\mathcal{B}}(n) - \frac{1}{q} \sum_{N \leq n < 2N} \mathbf{1}_{\mathcal{B}}(n) \right| \ll \frac{\sum_{N \leq n < 2N} \mathbf{1}_{\mathcal{B}}(n)}{(\log N)^{101d^2}}.
        \]
        \item  \label{H3} Primes represented by the linear forms in $\mathcal{L}$ are well-distributed in arithmetic progressions, that is, for any $L \in \mathcal{L}$, the following holds:
        \[
        \sum_{q \leq N^{\theta}} \max_{\substack{c: (L(c),q) = 1}} \left| \sum_{\substack{N \leq n < 2N \\ n\equiv c \bmod q \\ L(n) \in \mathcal{P}}} \mathbf{1}_{\mathcal{B}}(n) - \frac{1}{\varphi_L(q)} \sum_{\substack{N \leq n < 2N\\ L(n) \in \mathcal{P}} }\mathbf{1}_{\mathcal{B}}(n) \right| \ll \frac{\sum_{N \leq n < 2N, L(n) \in \mathcal{P}} \mathbf{1}_{\mathcal{B}}(n)}{(\log N)^{101d^2}}.
        \]
        where $\varphi_L(q) = \varphi(aq)/\varphi(a)$ for  $L(n) = an+b$.
        \item \label{H4} The set $\mathcal{B}$ is not too concentrated in any single arithmetic progression: for any $q \leq N^{\theta}$ and $0 \leq c \leq q-1$, the following holds:
        \[
        \sum_{\substack{N \leq n < 2N \\ n\equiv c \bmod q}} \mathbf{1}_{\mathcal{B}}(n) \ll \frac{1}{q} \sum_{N \leq n < 2N}\mathbf{1}_{\mathcal{B}}(n).
        \]
    \end{enumerate}
    Then,
    \[
    \#\{ n \in \mathcal{B} \cap [N, 2N) : \# \{ L_1(n), \ldots, L_d(n) \} \cap \mathcal{P} \geq C^{-1} \delta \log d \} \gg \frac{\sum_{N \leq n < 2N} 1_{\mathcal{B}}(n)}{(\log N)^d \exp(Cd)}.
    \]
    If, in addition, $\mathcal{P} = \P$, $k \leq (\log N)^{1/5}$, and $a_i = a \ll 1, b_i \leq (\log x) d^{-2}$, we have
    \begin{align*}
        &\#\{ n \in \mathcal{B} \cap [N, 2N) : \text{ at least } \lceil C^{-1} \delta \log d \rceil \text{ of }  \{ L_1(n), \ldots, L_d(n) \} \text{ are consecutive primes} \}\\
        &\qquad \gg \frac{\sum_{N \leq n < 2N} 1_{\mathcal{B}}(n)}{(\log N)^d \exp(Cd^5)}.\\
    \end{align*}
    In both estimates, the  implied constants do not depend on $N$ and $d$.
\end{Theorem}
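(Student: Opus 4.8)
Theorem~\ref{maynard} is quoted from \cite{Maynard_clusters}; for completeness we indicate the strategy of its proof, which also explains the shape of each hypothesis. The plan is to run the Maynard--Tao multidimensional sieve with the counting measure on $[N,2N)$ twisted by $\mathbf 1_{\mathcal B}$. First fix a smooth $F$ supported on the simplex $\mathcal R_d=\{t\in[0,1]^d:t_1+\dots+t_d\le1\}$, let $R$ be a suitable small power of $N$ (so that $R^{O(1)}\le N^{\theta}$), and pick $v_0$ modulo $B$ with $(L_i(v_0),B)=1$ for all $i$. With $\lambda_{d_1,\dots,d_d}$ the usual sieve weights attached to $F$, supported on squarefree $d_1\cdots d_d\le R$ coprime to $B$, form
\[
 w_n=\Bigl(\,\sum_{d_i\mid L_i(n)\ \forall i}\lambda_{d_1,\dots,d_d}\Bigr)^{2},
\]
and the two sums $S_1=\sum_{N\le n<2N,\,n\equiv v_0\,(B)}\mathbf 1_{\mathcal B}(n)w_n$ and $S_2=\sum_{i=1}^{d}\sum_{N\le n<2N,\,n\equiv v_0\,(B)}\mathbf 1_{\mathcal B}(n)\mathbf 1_{\mathcal P}(L_i(n))w_n$; the aim is to force $S_2-\rho S_1>0$ with $\rho$ as large as $\asymp\delta\log d$.

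Next I would evaluate the two sums. Expanding the square in $S_1$ and exchanging summation orders expresses it through inner sums $\sum_{N\le n<2N,\,n\equiv c\,(q)}\mathbf 1_{\mathcal B}(n)$ over moduli $q\le R^{2}\le N^{\theta}$ coprime to $B$; here hypothesis~\ref{H2} replaces each by its predicted value $\tfrac1q\sum_{N\le n<2N}\mathbf 1_{\mathcal B}(n)$, hypothesis~\ref{H4} controls the remaining diagonal, and one needs the saving $(\log N)^{-101d^{2}}$ to beat both the $\exp(O(d))d^{O(1)}$ tuples that appear and the $(\log N)^{-d}$-sized main terms --- which is exactly why that exponent is $101d^{2}$. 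This gives $S_1=(1+o(1))\,\kappa\,B_N\,(\log R)^{d}I_d(F)$, where $B_N:=\sum_{N\le n<2N}\mathbf 1_{\mathcal B}(n)$, $I_d(F)=\int_{\mathcal R_d}F^{2}$, and $\kappa>0$ is built from $\varphi(B)/B$ and a convergent product of local densities. Likewise each summand of $S_2$ reduces to sums of $\mathbf 1_{\mathcal B}(n)\mathbf 1_{\mathcal P}(L_i(n))$ over progressions to moduli $\le N^{\theta}$: hypothesis~\ref{H3} handles the progressions, and hypothesis~\ref{H1} supplies the lower bound on the base counts $\sum_{N\le n<2N,\,L_i(n)\in\mathcal P}\mathbf 1_{\mathcal B}(n)$, which yields $S_2\ge(1+o(1))\,\kappa\,\tfrac{\delta}{\log N}\,B_N\,(\log R)^{d+1}\sum_{i}J_d^{(i)}(F)$ with $J_d^{(i)}(F)=\int_{\mathcal R_{d-1}}\bigl(\int_0^{1-\sum_{j\ne i}t_j}F\,dt_i\bigr)^{2}$.

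Since $\log R\asymp\log N$, this gives $S_2/S_1\gg\sum_i J_d^{(i)}(F)/I_d(F)$, and I would then invoke Maynard's explicit choice of $F$, for which $\sum_i J_d^{(i)}(F)/I_d(F)\gg\log d$; hence $S_2-\rho S_1>0$ for $\rho=\lceil C^{-1}\delta\log d\rceil$ once $C$ is large, producing a good $n$. To upgrade this to the stated lower bound on the \emph{number} of good $n$, I would compute a second-moment bound $\sum_{N\le n<2N}\mathbf 1_{\mathcal B}(n)w_n^{2}\ll B_N(\log N)^{O(d)}\exp(O(d))$ (again from hypotheses~\ref{H2} and~\ref{H4}) and apply Cauchy--Schwarz to $S_2-\rho S_1\le\sum_{n\in A}d\,w_n\mathbf 1_{\mathcal B}(n)$, where $A$ is the set to be bounded; this gives $\#A\gg B_N(\log N)^{-d}\exp(-Cd)$, the first displayed conclusion.

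For the refined statement, the extra hypotheses are exactly what is needed to promote ``$\ge\rho$ of the $L_i(n)$ are prime'' to ``$\ge\rho$ are consecutive primes''. Since $a_i=a\ll1$ and $b_i\le(\log x)d^{-2}$, the values $L_1(n),\dots,L_d(n)$ lie in an interval of length $O(\log N/d^{2})$; the only way a prime among them can fail to be immediately followed by the next one is that some integer $m$ of that interval, equal to no $L_i(n)$, is itself prime. I would kill these ``gap integers'' by enlarging the modulus: adjoin to $B$ a product $Q$ of auxiliary primes, each slightly larger than $a\max_i b_i$ (so that admissibility is untouched), and refine $v_0$ so that every gap integer becomes divisible by one of them. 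Because there are only $O(\log N/d^{2})$ gap integers and $d\le(\log N)^{1/5}$, there is enough room among the primes below a small power of $\log N$ to do this while keeping $Q=N^{o(1)}$, so that the asymptotics of $S_1$ and $S_2$ go through unchanged. I expect the delicate point --- the main obstacle --- to be checking that restricting $\mathcal B$ to the progression modulo $BQ$ still satisfies hypotheses~\ref{H1}--\ref{H4} up to harmless constants, and that the auxiliary primes really can be found in the allowed ranges; the crude bookkeeping of all these choices is what degrades $\exp(Cd)$ to $\exp(Cd^{5})$, and this is precisely where the numerical constraints $k\le(\log N)^{1/5}$ and $b_i\le(\log x)d^{-2}$ enter.
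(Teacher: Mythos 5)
The paper does not prove this theorem: it is imported verbatim from Maynard's \emph{Dense clusters of primes in subsets} (Theorem 3.1) and used as a black box, so there is no in-paper argument to compare yours against. Your outline is a faithful summary of the strategy of Maynard's original proof --- the multidimensional sieve weights, the division of labour among Hypotheses \ref{H1}--\ref{H4} in evaluating $S_1$ and $S_2$, the bound $\sum_i J_d^{(i)}(F)/I_d(F)\gg\log d$ for Maynard's choice of $F$, and the Erd\H{o}s-style device of forcing every integer in the short interval that is not one of the $L_i(n)$ to be composite so that the primes found are consecutive. Be aware, though, that as written this is a sketch and not a proof: the precise asymptotics for $S_1$ and $S_2$, the counting step (Maynard bounds the number of good $n$ by controlling $w_n$ on average rather than by the exact second-moment inequality you state), and the verification that adjoining the auxiliary modulus $Q$ preserves admissibility and Hypotheses \ref{H1}--\ref{H4} are exactly where the real work of \cite{Maynard_clusters} lies, and none of it is carried out here.
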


When $B = 1$, $a_i = 1$ for all $i$, $0 < \theta < 1/2$, $\mathcal{B} = \N$ and $\mathcal{P} = \P$, Hypotheses 1-4 of Theorem \ref{maynard} are well known. Indeed, Hypothesis \ref{H1} is a consequence of the prime number theorem, Hypotheses \ref{H2} and \ref{H4} are obvious, and Hypothesis \ref{H3} follows from a form of the Bombieri-Vinogradov theorem.
\begin{Theorem}[Bombieri-Vinogradov]\label{Bombieri}
For all real numbers $0<\theta<\frac{1}{2}$ and $D>0$, and every integer $N \in \N$ large enough in terms of $\theta$ and $D$, we have
\begin{equation}\label{BVE0}
    \sum_{q\leq N^\theta}\max_{(a,q)=1}\left|\pi(N;a,q)-\frac{1}{\varphi(q)}\pi(N)\right|\ll_{D}\frac{N}{(\log N)^D}.
\end{equation}
\end{Theorem}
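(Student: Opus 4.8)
The statement is the classical Bombieri--Vinogradov theorem, so the plan is to reproduce its standard proof, which couples a combinatorial identity for the von Mangoldt function with the large sieve (see, e.g., Davenport's \emph{Multiplicative Number Theory}, Chapters~27--28, or Iwaniec--Kowalski, \S17). First I would pass from the prime-counting function to the von Mangoldt weighted sum $\psi(N;a,q)=\sum_{n\leq N,\ n\equiv a\,(q)}\Lambda(n)$ via partial summation; proper prime powers contribute $O(N^{1/2}\log N)$ in total and the transfer between $\pi$ and $\psi$ costs only a single factor $\log N$, which is harmless after enlarging $D$. Thus it suffices to establish
\[
\sum_{q\leq Q}\ \max_{(a,q)=1}\ \max_{y\leq N}\ \Bigl|\psi(y;a,q)-\frac{y}{\varphi(q)}\Bigr|\ \ll_{D}\ \frac{N}{(\log N)^{D}},\qquad Q:=N^{\theta}.
\]

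Next I would expand into Dirichlet characters and reduce to primitive ones. By orthogonality, $\psi(y;a,q)-\varphi(q)^{-1}\sum_{n\leq y,\ (n,q)=1}\Lambda(n)=\varphi(q)^{-1}\sum_{\chi\neq\chi_0}\bar{\chi}(a)\,\psi(y,\chi)$, with $\psi(y,\chi)=\sum_{n\leq y}\Lambda(n)\chi(n)$ and the defect from $(n,q)>1$ of size $O((\log N)^{2})$ per modulus. Every nonprincipal $\chi\bmod q$ is induced by a unique primitive character of some conductor $r\mid q$, $r>1$, and grouping the moduli $q\leq Q$ divisible by a fixed $r$ (using $\sum_{r\mid q\leq Q}\varphi(q)^{-1}\ll\varphi(r)^{-1}\log N$) turns the target, up to a negligible correction, into a bound of the shape
\[
\sum_{r\leq Q}\frac{1}{\varphi(r)}\sum\nolimits^{*}_{\chi\bmod r}\ \max_{y\leq N}|\psi(y,\chi)|\ \ll\ \frac{N}{(\log N)^{D}},
\]
where the $*$ restricts the inner sum to primitive characters.

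To estimate $\psi(y,\chi)$ I would apply Vaughan's identity (or Heath--Brown's), which expresses $\sum_{n\leq y}\Lambda(n)\chi(n)$, up to a negligible error, as a combination of ``Type~I'' sums carrying one long variable that can be summed as a character sum over an interval, and ``Type~II'' bilinear sums $\sum_{m}\sum_{k}b_{m}c_{k}\,\chi(mk)$ with $|b_{m}|\ll\log m$, $|c_{k}|\leq\tau(k)$ and both variables in a medium range. The Type~I sums are handled by executing the long character sum (partial summation and the P\'olya--Vinogradov inequality) and then summing over the short variable and over the primitive characters modulo $r\leq Q$; these stay under control as long as $Q$ sits a little below $N^{1/2}$. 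The Type~II sums are the heart of the matter: after a dyadic decomposition, Gallagher's device for removing the maximum over $y$ (at a cost of $(\log N)^{O(1)}$), and Cauchy--Schwarz, I would invoke the multiplicative large sieve inequality
\[
\sum_{r\leq Q}\frac{r}{\varphi(r)}\sum\nolimits^{*}_{\chi\bmod r}\ \Bigl|\sum_{M<n\leq 2M}\!\!a_{n}\chi(n)\Bigr|^{2}\ \ll\ (M+Q^{2})\sum_{M<n\leq 2M}|a_{n}|^{2},
\]
together with standard divisor-sum bounds for the mean squares of $b$ and $c$. Summing the dyadic blocks, the whole expression is $\ll_{D}N(\log N)^{-D}$ as soon as $Q=N^{\theta}$ with $\theta<\tfrac12$, that is, $Q\leq N^{1/2}(\log N)^{-A}$ for a sufficiently large $A=A(D)$; undoing the reductions then gives the theorem.

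The step I expect to be the genuine obstacle --- and the one where the admissible range $\theta<1/2$ is actually decided --- is the bilinear (Type~II) estimate: the interplay between the lengths of the two variables and the term $M+Q^{2}$ coming from the large sieve is precisely what limits $Q$ to essentially $N^{1/2}$, and one must take care to carry the maxima over $a$ (coprime to $q$) and over $y\leq N$ through this step uniformly, which is the role of Gallagher's maximal variant of the large sieve (or a truncated Perron contour). The remaining ingredients --- partial summation, the reduction to primitive characters, the Type~I estimate, and keeping track of the powers of $\log N$ --- are routine.
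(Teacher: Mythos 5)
Your proposal is correct, but it takes a much longer route than the paper. The paper's proof simply \emph{cites} the $\psi$-form of Bombieri--Vinogradov (Theorem~17.1 of Iwaniec--Kowalski) as a black box and then carries out only the transfer to $\pi(N;a,q)$: it strips out the proper prime powers (contributing $O(N^{1/2}(\log N)^2)$ per modulus, hence $O(N^{1/2+\theta}(\log N)^2)$ in total, which is negligible for $\theta<1/2$) and converts $\sum_{p\le N,\,p\equiv a\,(q)}\log p$ into $\pi(N;a,q)$ by partial summation, pulling the absolute values inside the integral so that the same maximal sum over $q\le N^\theta$ controls everything. Your opening paragraph is exactly this reduction, so the two arguments agree on the part the paper actually writes out; everything after that --- orthogonality and reduction to primitive characters, Vaughan's identity, P\'olya--Vinogradov for the Type~I sums, Gallagher's maximal large sieve for the Type~II sums --- is your re-derivation of the cited input. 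That re-derivation is the standard proof and your sketch of it is accurate, including the correct identification of the Type~II/large-sieve step as the source of the barrier $\theta<\tfrac12$; what it buys is self-containedness, at the cost of leaving the genuinely hard estimates at the level of a sketch, whereas the paper's choice keeps the argument short and fully rigorous modulo a standard reference. One cosmetic point if you flesh this out: you compare $\psi(y;a,q)$ to $y/\varphi(q)$ while the target compares to $\pi(N)/\varphi(q)$; the discrepancy $(\psi(y)-y)/\varphi(q)$ summed over $q\le N^\theta$ is $\ll N\exp(-c\sqrt{\log N})\log N$ by the prime number theorem, so it is harmless, but it should be said.
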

\begin{proof}
    This follows from \cite[Theorem 17.1]{iwaniec2021analytic} by partial summation.
\end{proof}
To prove Theorem \ref{main_result}, we show that for every $m \in \N$, we can find some $d > m$ and some integers $h_1 < h_2 < \ldots < h_d$ for which there exists a subset $\mathcal{B}\subset\{ n \in \N : \{F(n+h_1)\}, \ldots, \{F(n+h_d)\} \in U \}$ which satisfies the hypotheses of \cref{maynard} for the linear polynomials $L_i(n) = n + h_i$, $1 \leq i \leq d$. In particular, for every $N \in \N$ large enough, there exists some
\begin{equation*}\label{equ_set}
n \in [N,2N) \cap \mathcal{B}
\end{equation*}
for which $\{ n + h_1, \ldots, n + h_d \}$ contains at least $m$ consecutive primes. Notice that $n\in\mathcal{B}$ implies $n\in\{ n \in \N: \{F(n+h_1)\}, \ldots, \{F(n+h_d)\} \in U \}$ which in turn implies that these primes will necessarily lie in the set $\mathcal{A}$ from the statement of \cref{main_result}. 
We show that we can choose the integers $h_1, \ldots, h_d$ so that the aforementioned set $\mathcal{B}$ can be taken to be 
\begin{equation*}\label{equ_form_set}
    \mathcal{B} = \{ n \in \N : (\{F(n+h_1)\}, \ldots, \{F(n+h_\ell)\}) \in U_\ell \},
\end{equation*}
where $U_\ell$ is a semi-open box in $[0,1)^{\ell}$, and $\ell$ is a well chosen integer less than $d$. To verify the hypotheses of \cref{maynard}, we use the fact that for this choice of $\ell$, the sequence $(\{F(n+h_1)\}, \ldots, \{F(n+h_\ell)\})$ is well distributed in $[0,1)^\ell$. A quantitative measure of how well a finite sequence is distributed in $[0,1)^\ell$ is given by its discrepancy. 

\begin{Definition}
The discrepancy of a finite sequence $(\mathbf{x}_n)_{n=1}^N$ in $[0,1)^\ell$ is the quantity
    \[
    D\left((\mathbf{x}_n)_{n=1}^N\right) = \sup_{B \subset [0,1)^\ell} \left| \frac{1}{N} \sum_{n=1}^N \mathbf{1}_{B}(\mathbf{x}_n) - \lambda_\ell(B)\right|,
    \]
where the supremum is taken over semi-open boxes of the form $B = \prod_{i=1}^\ell [a_i,b_i)$, where $0\leq a_i < b_i < 1$ for each $i$.
\end{Definition}
The \Erdos-\Turan-Koksma inequality gives a bound on the discrepancy of a sequence in terms of exponential sums.
\begin{Theorem}[\Erdos-\Turan-Koksma inequality, \cite{drmota2006sequences}, Theorem 1.21] \label{discbd}
    For every $H\in \N$ and for every finite sequence $(\mathbf{x}_n)_{n = 1}^N$ in $[0,1)^\ell$, we have
    \[
    D\left((\mathbf{x}_n)_{n=1}^N\right) \ll_\ell  \left(\frac{1}{H+1}+\sum_{\substack{\mathbf{m} \in Z^{\ell} \setminus \{\bar{0}\} \\ \lVert \mathbf{m} \rVert_\infty \leq H}}\frac{1}{r(\mathbf{m})} \left|\frac{1}{N}\sum_{n=1}^N e(\langle \mathbf{x}_n, \mathbf{m} \rangle)\right| \right),
    \]
    where if $\ \mathbf{m} = (m_1, \ldots, m_\ell)$, we define $\lVert \mathbf{m} \rVert_\infty := \max_{1 \leq i \leq \ell} |m_i|$, and \\ $r(\mathbf{m}) := \prod_{i=1}^\ell \max\{|m_i|, 1\}$.
\end{Theorem}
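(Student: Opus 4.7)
The plan is to establish this classical inequality by the standard Fourier-analytic sandwich method going back to Koksma. The goal is to approximate the indicator function $\mathbf{1}_B$ of each semi-open box $B = \prod_{i=1}^\ell [a_i, b_i) \subset [0,1)^\ell$ pointwise from above and from below by trigonometric polynomials whose frequencies are confined to $\|\mathbf{m}\|_\infty \leq H$, and then average over the sequence $(\mathbf{x}_n)_{n=1}^N$ so that the discrepancy is transferred into a combination of exponential sums of exactly the form appearing in the conclusion.

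The first and most delicate ingredient is a one-dimensional Selberg--Beurling--Vaaler-type construction: for every half-open interval $[a,b)\subset [0,1)$ and every integer $H\geq 1$, one produces $1$-periodic trigonometric polynomials $F^\pm_{[a,b),H}$ of degree at most $H$ such that $F^-_{[a,b),H}\leq \mathbf{1}_{[a,b)}\leq F^+_{[a,b),H}$ pointwise, with $\int_0^1 F^\pm_{[a,b),H}(x)\d x = (b-a) \pm O(1/(H+1))$ and Fourier coefficients satisfying $|\widehat{F^\pm}(m)| \ll \min\bigl((b-a)+1/(H+1),\,1/|m|\bigr)$ for $0 < |m|\leq H$. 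I would then tensorise by setting $G^\pm_B(\mathbf{x}) := \prod_{i=1}^\ell F^\pm_{[a_i,b_i),H}(x_i)$ to obtain $\ell$-dimensional sandwiching trigonometric polynomials whose Fourier spectrum lies in $[-H,H]^\ell$. Multiplying Fourier coefficients coordinatewise gives $\hat{G}^\pm_B(\mathbf{0}) = \lambda_\ell(B) + O_\ell(1/(H+1))$ and $|\hat{G}^\pm_B(\mathbf{m})| \ll_\ell 1/r(\mathbf{m})$ for every nonzero $\mathbf{m}$ with $\|\mathbf{m}\|_\infty\leq H$, where a short case analysis handles the coordinates $m_i=0$ via the trivial bound $|\widehat{F^\pm}(0)|\leq (b-a)+1/(H+1)\leq 1$.

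Averaging the pointwise sandwich $G^-_B\leq \mathbf{1}_B\leq G^+_B$ along the sequence, and then expanding $G^\pm_B$ in Fourier series, yields
\[
\left|\frac{1}{N}\sum_{n=1}^N \mathbf{1}_B(\mathbf{x}_n) - \lambda_\ell(B)\right| \ll_\ell \frac{1}{H+1} + \sum_{\substack{\mathbf{m} \in \Z^\ell \setminus \{\mathbf{0}\} \\ \|\mathbf{m}\|_\infty \leq H}} \frac{1}{r(\mathbf{m})} \left|\frac{1}{N}\sum_{n=1}^N e(\langle \mathbf{x}_n,\mathbf{m}\rangle)\right|,
\]
uniformly in $B$; taking the supremum over boxes $B$ finishes the proof. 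The main obstacle is the one-dimensional construction: the decay estimate $|\widehat{F^\pm}(m)|\ll 1/|m|$, as opposed to something weaker, is precisely what forces the weights $1/r(\mathbf{m})$ after tensorisation and is what makes the inequality sharp enough for applications. Once this classical building block is granted, the remainder of the argument is routine bookkeeping, with the implied constant absorbing only the $\ell$-fold product structure.
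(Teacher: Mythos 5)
The paper does not actually prove this statement; it is quoted from Drmota--Tichy (Theorem 1.21), so your sketch has to be judged against the standard argument. Your overall strategy (Selberg--Beurling--Vaaler majorants and minorants, tensorisation, Fourier expansion, supremum over boxes) is indeed the classical route, and the majorant half is fine: since each one-dimensional majorant satisfies $F^+_{[a_i,b_i),H} \geq \mathbf{1}_{[a_i,b_i)} \geq 0$, the product $\prod_i F^+_{[a_i,b_i),H}(x_i)$ genuinely majorises $\mathbf{1}_B$, and your Fourier-coefficient bookkeeping for it is correct.

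The gap is in the minorant. A one-dimensional Selberg minorant $F^-$ necessarily takes negative values: it lies below $\mathbf{1}_{[a,b)}$, hence is $\leq 0$ off $[a,b]$, and being a nonconstant trigonometric polynomial it is strictly negative on a set of positive measure (when $b-a<1/(H+1)$ even its integral is negative). Consequently, for $\ell\geq 2$ the product $G^-_B(\mathbf{x})=\prod_i F^-_{[a_i,b_i),H}(x_i)$ is \emph{not} a pointwise minorant of $\mathbf{1}_B$: at a point where $x_1\notin[a_1,b_1]$, $x_2\notin[a_2,b_2]$ and both $F^-_1(x_1)$, $F^-_2(x_2)$ are strictly negative, the product is strictly positive while $\mathbf{1}_B$ vanishes. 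Since the paper applies the inequality with $\ell=\lfloor\rho_r\rfloor+1$, which is $\geq 2$ in general, this is not a removable edge case. The standard repair keeps your framework: set $\chi_j^\pm=F^\pm_{[a_j,b_j),H}$ and $\delta_j=\chi_j^+-\mathbf{1}_{[a_j,b_j)}\in[0,\chi_j^+-\chi_j^-]$, and expand $\prod_j\chi_j^+-\prod_j\mathbf{1}_{[a_j,b_j)}=\sum_{\emptyset\neq S}\prod_{j\in S}\delta_j\prod_{j\notin S}\mathbf{1}_{[a_j,b_j)}$. This is nonnegative and bounded above by the trigonometric polynomial $\sum_{\emptyset\neq S}\prod_{j\in S}(\chi_j^+-\chi_j^-)\prod_{j\notin S}\chi_j^+$, which has spectrum in $[-H,H]^\ell$, mean $O_\ell(1/(H+1))$, and Fourier coefficients $O_\ell(1/r(\mathbf{m}))$; averaging this two-sided control over $n$ yields the stated bound without ever needing a multiplicative minorant. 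With that substitution your proof goes through.
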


In Section \ref{discr_integers}, we study the distribution of sequences of the form \\ $(\{ F(n+h_1) \}, \ldots, \{ F(n+h_\ell) \})$ in $[0,1)^\ell$, where $F$ is as in the statement of \cref{main_result}, $\ell = \lfloor \rho_r \rfloor + 1$, and where $n$ runs over the integers in some dyadic interval $[N,2N)$ or over the integers in $[N,2N)$ that lie in an arithmetic progression. This allows us to verify Hypotheses \ref{H2} and \ref{H4} of \cref{maynard} in the proof of \cref{main_result}. Then, in Section \ref{discr_primes}, we develop the tools to verify the Hypotheses \ref{H1} and \ref{H3} by studying the distribution of the same sequence but over primes in $[N,2N)$, as well as the primes $[N,2N)$ restricted to an arithmetic progression. The restriction $\rho_r \not \in \Z$ to the polynomial with real exponents $F(x) = \sum_{i=1}^r d_i x^{\rho_i}$ ensures the existence of good upper bounds on exponential sums of the form
\begin{align*}
    \sum_{N \leq n < 2N} e(\langle (F(n+h_1), \ldots, F(n+h_\ell)), \mathbf{m} \rangle)
\end{align*}
for $\mathbf{m}$ a vector of $\ell$ integers, which is crucial for our argument. This is explained in more detail at the beginning of Section \ref{lastsection}, and especially in \cref{rem_notinz}. Finally, in Section \ref{lastsection}, we combine the results of the preceding sections to prove Theorem \ref{main_result}.\\
\\

\section{Uniform distribution of values of polynomials with real exponents in arithmetic progressions}\label{discr_integers}

In this section, we prove results that will help us verify Hypotheses \ref{H2} and \ref{H4} of Theorem \ref{maynard} (\cref{l13} and \cref{hypothesis4}). To do this, we study the distribution of sequences of the form $\overline{F}(n) = (\{ F(n+h_1) \}, \ldots, \{ F(n+h_\ell) \})$ in $[0,1)^\ell$ for $F$ as in the statement of \cref{main_result}. \cref{discbd} allows us to bound the discrepancy of such sequences by exponential sums over sequences of the form $\langle \mathbf{m}, \overline{F}(n) \rangle$ where $\mathbf{m}$ is a nonzero vector of integers. We first prove in \cref{prop_lin_comb} that $\langle \mathbf{m},\overline{F} \rangle$ can be approximated by polynomial-like functions with real exponents. Then, in \cref{lem_BKM+_reformulated}, we reprove a bound on exponential sums from \cite{Bergelson2014}, and we specialize it to our setting in \cref{be} and \cref{corbdd}.\\

\begin{Proposition}\label{prop_lin_comb}
    Let $\mathcal{H} = \{ h_1, \ldots, h_\ell \}$ be a set of real numbers, with $h_1 < h_2 < \ldots < h_\ell$, and let $F(x) = \sum_{i=1}^r d_i x^{\rho_i}$ with $0 \leq \rho_1 < \ldots < \rho_r, \rho_r \not \in \Z, d_r \neq 0$ be a polynomial with real exponents, such that $\ell = \lfloor \rho_r \rfloor + 1$. Define $\overline{F}(x) := (F(x+h_1), \ldots, F(x+h_\ell))$. Then, for any $\eta > 0$, any integer $N$ and any $\mathbf{m} = (m_1, \ldots, m_\ell) \in \Z^\ell \setminus \{\overline{0}\}$ with $\max\limits_{1 \leq j \leq \ell} |m_j| \leq N^{\eta}$, we can write
    \begin{align*}
        \langle \mathbf{m} ,\overline{F}(x) \rangle = F_1(x) + O_{F, \mathcal{H}}(x^{\{\rho_r\} + \eta - 1}),
    \end{align*}
    where $F_1$ is a function of the form $F_1(x) = \sum_{n = n_0}^{\ell - 1}\sum_{i = 1}^r b_{n,i}x^{\rho_i - n}$, with the coefficient of the largest power of $x$ in the expression being $b_{n_0, r} = \left( \sum_{j=1}^\ell m_j h_j^{n_0} \right) d_r \binom{\rho_r}{n_0}$ for some integer $0 \leq n_0 \leq \ell - 1$. Moreover, the coefficients $b_{n,j}$ are all uniformly bounded by
    \begin{align*}
        \ell^2 r \max_{\substack{0 \leq n \leq \ell - 1 \\ 1 \leq j \leq \ell \\ 1 \leq i \leq r}} \left(\left|h_j^n d_i \binom{\rho_i}{n}\right|\right) N^\eta,
    \end{align*}
    where 
    \begin{align*}
        \binom{\rho}{0} = 1, \qquad  \binom{\rho}{n} = \frac{\rho(\rho - 1) \ldots (\rho - n + 1)}{n!} \quad \text{ for } n \geq 1.
    \end{align*}
\end{Proposition}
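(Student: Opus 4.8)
The plan is to expand each term $F(x+h_j)$ using the generalized binomial series for the non-integer powers and the ordinary binomial theorem for the polynomial part, then collect terms by the exponent of $x$. First I would write, for each $1 \le i \le r$,
\[
(x+h_j)^{\rho_i} = x^{\rho_i}\bigl(1 + h_j/x\bigr)^{\rho_i} = x^{\rho_i}\sum_{n \ge 0} \binom{\rho_i}{n}\Bigl(\frac{h_j}{x}\Bigr)^{n} = \sum_{n \ge 0}\binom{\rho_i}{n} h_j^{n}\, x^{\rho_i - n},
\]
valid for $x$ large compared with $\max_j |h_j|$. The key observation is that once $\rho_i - n < \{\rho_r\} - 1 + \eta$ (equivalently $n$ is large enough), all such terms are absorbed into the error; and since $\rho_r \notin \Z$, we have $\rho_r - (\ell-1) = \rho_r - \lfloor \rho_r \rfloor = \{\rho_r\} \in (0,1)$, so the cutoff $n \le \ell - 1$ is exactly the right place to truncate — the first discarded term from the top exponent $\rho_r$ has exponent $\{\rho_r\} - 1 < 0$, and it is here that the hypothesis $\rho_r \notin \Z$ is used (an integer $\rho_r$ would make $\{\rho_r\}=0$ and the truncation would not leave a strictly decaying tail).

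Next I would take the inner product with $\mathbf{m}$: summing over $j$ with weights $m_j$,
\[
\langle \mathbf{m}, \overline{F}(x)\rangle = \sum_{j=1}^{\ell} m_j \sum_{i=1}^{r}\sum_{n \ge 0} d_i \binom{\rho_i}{n} h_j^{n}\, x^{\rho_i - n}
= \sum_{n=0}^{\infty}\sum_{i=1}^{r}\Bigl(\sum_{j=1}^{\ell} m_j h_j^{n}\Bigr) d_i \binom{\rho_i}{n}\, x^{\rho_i - n}.
\]
Setting $b_{n,i} = \bigl(\sum_{j=1}^{\ell} m_j h_j^{n}\bigr) d_i \binom{\rho_i}{n}$ and truncating the outer sum at $n = \ell - 1$ gives $F_1(x) = \sum_{n=0}^{\ell-1}\sum_{i=1}^{r} b_{n,i} x^{\rho_i - n}$. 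The leading exponent among the kept terms is $\rho_r - 0 = \rho_r$ if $\sum_j m_j \ne 0$, and otherwise it is $\rho_r - n_0$ where $n_0$ is the smallest $n$ with $\sum_j m_j h_j^{n} \ne 0$; such an $n_0 \le \ell - 1$ exists because the Vandermonde-type system forces $\sum_j m_j h_j^{n} = 0$ for all $0 \le n \le \ell - 1$ to imply $\mathbf{m} = \bar 0$ only when $\ell$ points and $\ell$ exponents are available — here I would instead argue directly that if $\sum_j m_j h_j^n = 0$ for $n = 0, \dots, \ell-1$ then, since the $h_j$ are distinct and there are $\ell$ of them, the nonzero vector $\mathbf{m}$ would be orthogonal to $\ell$ columns of an invertible $\ell \times \ell$ Vandermonde matrix, a contradiction; hence some $n_0 \le \ell - 1$ works and $b_{n_0, r} = \bigl(\sum_j m_j h_j^{n_0}\bigr) d_r \binom{\rho_r}{n_0} \ne 0$ since $d_r \ne 0$ and $\binom{\rho_r}{n_0} \ne 0$ (as $\rho_r \notin \{0, 1, \dots, \ell - 2\}$, which holds because $\rho_r \ge \ell - 1 > \ell - 2$ and $\rho_r \notin \Z$).

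For the error term I would bound the discarded tail $\sum_{j} m_j \sum_i d_i \sum_{n \ge \ell} \binom{\rho_i}{n} h_j^n x^{\rho_i - n}$: each such term has $x$-exponent at most $\rho_r - \ell = \{\rho_r\} - 1$, and summing the geometric-type tail (using $|\binom{\rho_i}{n}| h_j^n / x^n \le (C/x)^n$ eventually, or more carefully comparing with a convergent series for $x$ large) gives a total of $O_{F,\mathcal{H}}(x^{\{\rho_r\}-1})$ times the weight bound $\max_j |m_j| \le N^\eta$, yielding the claimed $O_{F,\mathcal{H}}(x^{\{\rho_r\}+\eta-1})$. The uniform bound on the $b_{n,j}$ follows immediately from $|b_{n,i}| \le \ell \max_j |m_j| \cdot |d_i \binom{\rho_i}{n} h_j^n| \le \ell \cdot N^\eta \cdot \max_{n,j,i}|h_j^n d_i \binom{\rho_i}{n}|$, which is even slightly better than the stated $\ell^2 r N^\eta$ factor; I would keep the paper's cruder bound for safety. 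The main obstacle is organizing the error-term estimate uniformly in $\mathbf{m}$ and $N$ — one must be careful that the implied constant depends only on $F$ and $\mathcal{H}$ (not on $\eta$ in a hidden way, and not on which $\rho_i$'s appear), which requires fixing a threshold $x_0 = x_0(F,\mathcal{H})$ beyond which the binomial tails converge geometrically and handling $x < x_0$ trivially by absorbing into the constant; everything else is bookkeeping on the binomial expansion.
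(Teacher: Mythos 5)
Your proposal is correct and takes essentially the same approach as the paper: binomially expand each $(x+h_j)^{\rho_i}$, truncate at $n=\ell-1$, collect coefficients, and use the invertibility of the Vandermonde matrix in the distinct $h_j$ to produce $n_0$ — the paper merely uses the finite Taylor expansion with remainder where you use the infinite series plus a tail estimate, which is an immaterial difference. (One parenthetical aside of yours is inaccurate but harmless: if $\rho_r$ were an integer the discarded tail would still decay, with exponent $-1$; the real role of $\rho_r\notin\Z$ here is to make $\binom{\rho_r}{n_0}\neq 0$, which you also correctly note.)
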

\begin{proof}
 By Taylor's theorem, we have that that for $|y| < 1$ and $\rho \in \R$,
    \begin{align*}
        (1+y)^\rho = \sum_{n=0}^{\ell - 1} \binom{\rho}{n} y^n  + O_{\rho}(y^\ell).
    \end{align*}
    It follows that for $j = 1, 2, \ldots, \ell$ and $x > h_\ell$, $x\geq 1$,
    \begin{align*}
        F(x + h_j) &= \sum_{i = 1}^r d_i (x + h_j)^{\rho_i}\\
        &= \sum_{i = 1}^r d_i x^{\rho_i} \left( \sum_{n=0}^{\ell - 1} \binom{\rho_i}{n} \left( \frac{h_j}{x} \right)^n  + O_{\rho_i}\left(\left( \frac{h_j}{x} \right)^\ell\right) \right)\\
        &= \sum_{i = 1}^r d_i   \sum_{n=0}^{\ell - 1} \binom{\rho_i}{n} h_j^n x^{\rho_i - n}  + O_{F, \mathcal{H}} \left(x^{\{ \rho_r \} - 1} \right).  \\
    \end{align*}
    For a fixed $\mathbf{m}=(m_1,...,m_\ell)\in \Z^\ell\backslash \{\bar{0}\}$, we can write
    \begin{align*}
        \langle \mathbf{m} ,\overline{F}(x) \rangle &= \sum_{j=1}^{\ell}m_jF(x+h_j)\\
        &=\sum_{j=1}^{\ell}m_j\sum_{i = 1}^r d_i   \sum_{n=0}^{\ell - 1} \binom{\rho_i}{n} h_j^n x^{\rho_i - n} + O_{F,\mathcal{H}} \left(x^{\{ \rho_r \} - 1} \sum_{j=1}^\ell|m_j| \right)\\
        &=    \sum_{n=0}^{\ell - 1}  \left( \sum_{j=1}^\ell m_j h_j^n \right) \sum_{i = 1}^r d_i\binom{\rho_i}{n} x^{\rho_i - n} +O_{F, \mathcal{H}}(x^{\{\rho_r\} + \eta - 1}).
    \end{align*}
    Define
    \begin{align*}
        F_1(x) &= \sum_{n=0}^{\ell - 1}  \left( \sum_{j=1}^\ell m_j h_j^n \right) \sum_{i = 1}^r d_i\binom{\rho_i}{n} x^{\rho_i - n} \\
        &= \sum_{n = 1}^{\ell - 1}\sum_{i = 1}^r b_{n,i}x^{\rho_i - n}.
    \end{align*}
    The term with the highest power of $x$ in this expression is 
    \begin{align*}
        b_{n_0,r} x^{\rho_r - n_0} = \left( \sum_{j=1}^\ell m_j h_j^{n_0} \right) d_r \binom{\rho_r}{n_0} x^{\rho_r - n_0},
    \end{align*}
    where $n_0$ is the smallest integer $0 \leq n_0 \leq \ell - 1$ such that $\sum_{j=1}^\ell m_j h_j^{n_0}$ is nonzero. This $n_0$ exists since $(m_1, \ldots, m_\ell)$ is a non-zero vector of integers and if we write
    $$\nu(h) = \begin{pmatrix}
        1\\
        h\\
        \vdots\\
        h^{\ell - 1}
    \end{pmatrix} \in \R^{\ell},$$
    then the vectors $\nu(h_1), \ldots, \nu(h_\ell)$ are all linearly independent. This follows, for example, from the fact that they are the columns of the Vandermonde matrix
    \begin{align*}
        V= \begin{pmatrix}
            1 & 1 & 1 & \cdots & 1\\
            h_1 & h_2 & h_3 & \cdots & h_\ell\\
            h_1^2 & h_2^2 & h_3^2 & \cdots & h_\ell^2\\
            \vdots & \vdots & \vdots & \ddots & \vdots\\
            h_1^{\ell - 1} & h_2^{\ell - 1} & h_3^{\ell - 1} & \cdots & h_\ell^{\ell - 1}\\
        \end{pmatrix},
    \end{align*}
    and $\det(V) = \prod\limits_{1 \leq i < j \leq \ell} (h_j - h_i) \neq 0$.\\
    
    Also, the absolute value of any of the coefficients of $F_1(x)$ is certainly bounded by the sum of the absolute values of all the coefficients, which in turn, by the triangle inequality, is bounded by
    \begin{align*}
        \sum_{n=0}^{\ell - 1}  \left( \sum_{j=1}^\ell |m_j| |h_j|^n \right) \sum_{i = 1}^r |d_i| \left|\binom{\rho_i}{n}\right|
        &\leq \ell^2 r \max_{\substack{0 \leq n \leq \ell - 1 \\ 1 \leq j \leq \ell \\ 1 \leq i \leq r}} \left(\left|h_j^n d_i \binom{\rho_i}{n}\right|\right) N^\eta.
    \end{align*}
    
\end{proof}

Next, we prove an upper bound on exponential sums of the form
\begin{align*}
    \sum_{X \leq x < 2X} e(f(x) + P(x)),
\end{align*}
where $P$ is a polynomial whose degree is bounded, and $f$ is a function whose derivatives satisfy certain growth conditions.

\begin{Lemma}\label{lem_BKM+_reformulated}
Let $k, q \in \N$, and set $K = 2^k$ and $Q = 2^q$. Let $0< c_1<c_2$. For every $X \in \N$, $X \geq K + 1$, every real number $g \in [1, X^{q+2}]$, every real-valued function $f(x)$ that is $(q + k + 2)$-times continuously differentiable on $[X, 2X]$ and satisfies $c_1gX^{-r}\leq \left| f^{(r)}(x)\right| \leq c_2gX^{-r}$ for $r = 1,\ldots q + k + 2$, and every real polynomial $P(x)$ of degree at most $k$, one has
    \[
    \left|\sum_{X \leq x < 2X} e(f(x) + P(x)) \right| \ll_{c_1,c_2,q,k}  X^{1- \frac{1}{K}} + X \left( \frac{(\log X)^k}{g} \right)^{1/K} + X \left(\frac{g}{X^{q+2}}\right)^{\frac{1}{4QK-2K}}.
    \]
\end{Lemma}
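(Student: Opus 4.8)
The plan is to prove \cref{lem_BKM+_reformulated} by iterating the classical Weyl--van der Corput differencing (process $A$) in the variable $x$ a total of $k$ times, followed by the van der Corput exponent pair machinery (process $B$, i.e. Poisson summation / the Kusmin--Landau or stationary-phase estimate) $q$ more times, which is the standard route to a $(1/K, \text{something})$ exponent pair but adapted to allow the polynomial perturbation $P(x)$ of degree $\leq k$. First I would record the elementary fact that if $|f^{(r)}(x)| \asymp g X^{-r}$ for $1 \leq r \leq q+k+2$, then for each fixed integer shift $h$ the second-difference function $f(x+h) - f(x)$ has derivatives of order $1 \leq r \leq q+k+1$ of size $\asymp |h| g X^{-r-1}$ on the overlap interval; crucially, differencing against the polynomial $P$ lowers its degree, so after one step $P(x+h)-P(x)$ has degree $\leq k-1$, and after $k$ steps the polynomial part disappears entirely (it becomes a constant, which does not affect the modulus of the exponential sum). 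This is exactly why the hypotheses demand $q+k+2$ derivatives: we burn $k$ of them on the differencing and keep $q+2$ for the final van der Corput estimate.

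Concretely, I would set up the induction. Write $S(X;f,P) = \sum_{X \leq x < 2X} e(f(x)+P(x))$. The Weyl--van der Corput inequality gives, for any $1 \leq H \leq X$,
\begin{align*}
    |S(X;f,P)|^2 \ll \frac{X^2}{H} + \frac{X}{H}\sum_{1 \leq |h| \leq H} \Bigl| \sum_{x \in I_h} e\bigl(f(x+h)-f(x) + P(x+h)-P(x)\bigr) \Bigr|,
\end{align*}
where $I_h$ is an interval of length $\leq X$. Now $P(x+h)-P(x)$ has degree $\leq k-1$, and $f_h(x) := f(x+h)-f(x)$ satisfies $|f_h^{(r)}(x)| \asymp_h |h| g X^{-r-1}$ for $1 \leq r \leq q+k+1$; rescaling, one treats the inner sum as a sum of the same shape with $g$ replaced by $\asymp |h| g / X$, $k$ replaced by $k-1$, and the dyadic length still $\asymp X$ (up to adjusting constants $c_1, c_2$). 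Iterating this $k$ times and carefully bookkeeping the accumulated powers of $\log$, the $X^2/H$ loss terms, and the exponents, one arrives after $k$ differencings at $X^{2^k}$-th power of $|S|$ being bounded by a main term plus $X^{2^k - 1}$ times a genuine van der Corput sum in a function $f^*$ with $|f^{*(r)}| \asymp g^* X^{-2r}$ (roughly; the precise scaling is what yields the $X^{q+2}$ in the hypothesis) and no polynomial part. At that stage I invoke the $q$-fold van der Corput process $B$ (or directly cite the exponent pair $(1/K, \ldots)$ obtained by $A^k B$) to bound the remaining sum, and unwind.

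I expect the main obstacle to be the bookkeeping of how the derivative sizes transform under each differencing step and matching it precisely to the three-term bound in the statement — in particular tracking where $X^{q+2}$, the exponent $\tfrac{1}{4QK-2K}$, and the $(\log X)^k$ come from. Each application of the Cauchy--Schwarz/van der Corput step squares the sum and introduces an $H$-average; choosing the $H$'s optimally at the end to balance the $X^{2^k}/H$ terms against the differenced contributions is routine but must be done carefully so the logarithmic losses only accumulate to $(\log X)^k$ and not worse. An alternative, cleaner presentation would be to deduce the lemma from an off-the-shelf statement in \cite{bergelson2013uniform} (which the text says is being reproved here) by verifying that our hypotheses on $f$ and the presence of $P$ fall within its scope after absorbing $P$ via the differencing reduction; I would likely structure the write-up as ``reduce to the polynomial-free case by $k$ applications of Weyl differencing, then apply the known van der Corput bound,'' isolating the derivative-size computations into a short preliminary claim so the main argument stays transparent.
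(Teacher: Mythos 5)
Your proposal is correct and follows essentially the same route as the paper: the paper invokes Lemma~2.1 of \cite{bergelson2013uniform}, which packages the $k$-fold Weyl--van der Corput differencing (reducing $P$ to the constant $a_k k! h_1\cdots h_k$ and converting $f$ into an integral average of $f^{(k)}$ with derivative bounds $\asymp g h_1\cdots h_k X^{-(k+r)}$), and then applies the $(q+2)$-th derivative test (Lemma~2.2 of the same reference, with exponent $1/(4Q-2)$) to the differenced function, exactly as you outline. Your bookkeeping of why $q+k+2$ derivatives are needed and where the three terms in the bound originate matches the paper's computation.
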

\begin{proof}
    This lemma is a reformulation of \cite[Lemma 2.5]{Bergelson2014}. We reprove it here for the convenience of the reader.
    Lemma~2.1 of \cite{Bergelson2014} with $H_i = \frac{X}{K}$ for $i = 1, \ldots, k$ and $S = \left|\sum_{X \leq x < 2X} e(f(x) + P(x)) \right|$ gives us 
    \begin{align*}
    \left( \frac{S}{X-1}\right)^K &\leq 8^{K-1} \left( \frac{1}{H_1^{K/2}} + \ldots +  \frac{1}{H_k} +  \frac{1}{H_1\ldots H_k(X-1)} \sum_{h_1=1}^{H_1} \ldots \sum_{h_k=1}^{H_k}\left| \sum_{x \in I(\mathbf{h})} e(f_1(x))\right| \right) \\
    &\ll_k \frac{1}{X} + \frac{1}{H_1\ldots H_k(X-1)} \sum_{h_1=1}^{H_1}\sum_{h_2=1}^{H_2} \ldots \sum_{h_k=1}^{H_k}\left| \sum_{x \in I(\mathbf{h})} e(f_1(x))\right| \label{ineq_S}.
    \end{align*}
    where $f_1(x) = h_1 \ldots h_k \left[ \int_0^1 \ldots \int_0^1 \frac{\partial^k}{\partial x^k}f(x + \langle\mathbf{h},\mathbf{t}\rangle) \d\mathbf{t} + a_kk!\right], \mathbf{h} = (h_1,\ldots h_k)$, $a_k$ is the coefficient of $x^k$ in $P(x)$ (if the degree of $P(x)$ is less than $k$, then $a_k = 0$), and $I(\mathbf{h}) = (X, 2X - 1 - h_1 - \ldots - h_k]$. Here, the variable $\mathbf{t}$ is a vector in $\left[0,1\right]^k$, and the measure is the usual Lebesgue measure.
    
     We have that $f_1(x)$ has $q+2$ continuous derivatives on $I(\mathbf{h})$, and
    \[
    f_1^{(r)}(x) =  h_1 \ldots h_k \left[ \int_0^1 \ldots \int_0^1 \frac{\partial^{k+r}}{\partial x^{k+r}}f(x + \langle\mathbf{h},\mathbf{t}\rangle) \d\mathbf{t} \right]
    \]
    for $r = 1 , \ldots, q + 2$. This, combined with the hypothesis that $c_1gX^{-r}\leq \left| f^{(r)}(x)\right| \leq c_2gX^{-r}$, gives us the inequalities
    \[
    c_1gh_1 \ldots h_k X^{-(k+r)} \leq \left|f_1^{(r)}(x)\right| \leq c_2gh_1 \ldots h_k X^{-(k+r)}
    \]
 for $r = 1, \ldots, q + 2$. Hence, the function $f_1(x)$ satisfies the conditions of Lemma 2.2 in \cite{Bergelson2014}, with $G = \frac{gh_1 \ldots h_k}{X^k}$ and $I = I(\mathbf{h})$. It follows that
    \[
    \left| \sum_{x \in I(\mathbf{h})} e(f_1(x))\right| \ll_{c_1,c_2,q}  G^{\frac{1}{4Q-2}}X^{1 - \frac{q+2}{4Q-2}} + \frac{X}{G}.
    \]
   Hence, we obtain
    \begin{align*}
        \left( \frac{S}{X-1} \right)^K &\ll_{c_1,c_2,q,k}  \frac{1}{X} + \frac{1}{H_1\ldots H_k(X-1)}\sum_{h_1=1}^{H_1}\sum_{h_2=1}^{H_2} \ldots \sum_{h_k=1}^{H_k} \left( G^{\frac{1}{4Q-2}}X^{1 - \frac{q+2}{4Q-2}} + \frac{X}{G}\right).
    \end{align*}
     Now, we have $G = \frac{gh_1 \ldots h_k}{X^k} \leq \frac{gH_1\ldots H_k}{X^k} = \frac{g}{(K)^k}$. With this upper bound, the first term in the innermost sum is independent of the $h_i$'s. To evaluate the second term, we have 
    \begin{align*}
    \sum_{h_1=1}^{H_1} \ldots \sum_{h_k=1}^{H_k} \frac{1}{G}
    = \frac{X^k}{g}\sum_{h_1=1}^{H_1} \ldots \sum_{h_k=1}^{H_k} \frac{1}{h_1\ldots h_k}
    \ll_k \frac{X^k}{g} \log H_1 \ldots \log H_k \leq \frac{X^k}{g} (\log X)^k.
    \end{align*}
     By using all of these bounds and substituting $H_i = \frac{X}{K},$ we get 
    \begin{align*}
    \left( \frac{S}{X-1} \right)^K &\ll_{c_1,c_2,q,k}  \frac{1}{X} + \left(\frac{g}{K^kX^{q+2}}\right)^{\frac{1}{4Q-2}} +\frac{K^k}{g} (\log X)^k\\
    &\ll_{c_1,c_2,q,k} \frac{1}{X} + \left(\frac{g}{X^{q+2}}\right)^{\frac{1}{4Q-2}} +\frac{(\log X)^k}{g}.
    \end{align*}
    Thus,
    \begin{align*}
    S \ll_{c_1,c_2,q,k}  X^{1- \frac{1}{K}} + X \left(\frac{g}{X^{q+2}}\right)^{\frac{1}{4QK-2K}} + X \left( \frac{(\log X)^k}{g} \right)^{1/K}.
    \end{align*}
\end{proof}
For the rest of this paper, we fix $\eta = \min \bigl\{\frac{1 - \{\rho_r\}}{2}, \rho_r - \rho_{r-1} \bigr\}/2$. We now use \cref{prop_lin_comb} together with \cref{lem_BKM+_reformulated} to get an upper bound on the exponential sums that appear when we apply the \Erdos-\Turan-Koksma inequality (\cref{discbd}) later in the section.
    \begin{Proposition} \label{be}
    Let $k,r\in\N$ and fix a polynomial with non-integer exponents $f(x)=\sum_{j=1}^r d_jx^{\rho_j}$ with $d_r\neq 0$ and $0 < \rho_1 < \rho_2 < \ldots < \rho_r$. Let $\ell = \lfloor \rho_r \rfloor + 1$, and let $\mathcal{H} = \{ h_1, \ldots, h_\ell \}$ be a set of integers with $h_1 < h_2 < \ldots < h_\ell$. Define $\bar f(x) := (f(x+h_1), \ldots, f(x+h_\ell))$. Then, there exists some $\alpha>0$ depending only on $k$ and $\rho_r$ such that uniformly over all real polynomials $P$ of degree at most $k$, all $N\in\N$ large enough in terms of $f, \mathcal{H}$ and $k$, and all $\mathbf{m} = (m_1, \ldots, m_\ell) \in \Z^\ell \setminus\{\bar{0}\}$ with $\max\limits_{1 \leq j \leq \ell} |m_j| \leq N^{\eta}$, 
    \begin{equation}
    \label{eqn_uni_dis_of_gen_poly_in_arith_prog_1}
    \left|\sum_{N \leq n < 2N}e(\langle \mathbf{m}, \bar f(n)\rangle + P(n)) \right| \ll_{f,\mathcal{H}, k} N^{1-\alpha}.
    \end{equation}
\end{Proposition}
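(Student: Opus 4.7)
The plan is to reduce, via Proposition \ref{prop_lin_comb}, to an exponential sum whose phase is an explicit fractional polynomial $F_1$ plus the given $P$, and then bound that sum with Lemma \ref{lem_BKM+_reformulated}. Concretely, Proposition \ref{prop_lin_comb} yields
\[
\langle \mathbf{m}, \bar f(n)\rangle \;=\; F_1(n) + R(n), \qquad R(n) = O_{f,\mathcal H}\bigl(n^{\{\rho_r\}+\eta-1}\bigr),
\]
where $F_1(x)=\sum_{n=n_0}^{\ell-1}\sum_{i=1}^r b_{n,i}\,x^{\rho_i-n}$ has leading term $b_{n_0,r}\,x^{\rho_r-n_0}$ with $b_{n_0,r}=\bigl(\sum_{j=1}^\ell m_j h_j^{n_0}\bigr) d_r\binom{\rho_r}{n_0}\neq 0$. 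Since $\eta\leq (1-\{\rho_r\})/2$, the exponent $\{\rho_r\}+\eta-1$ is strictly negative, so $|e(R(n))-1|\ll |R(n)|$ contributes at most $O(N^{\{\rho_r\}+\eta})=O(N^{1-\alpha_1})$ with $\alpha_1:=1-\{\rho_r\}-\eta>0$ to the exponential sum. It thus suffices to bound $\sum_{N\leq n<2N}e(F_1(n)+P(n))$, to which I would apply Lemma \ref{lem_BKM+_reformulated} with its ``$f$'' being $F_1$, its ``$P$'' being the given $P$, the given $k$, an integer $q$ chosen so that $q+2>\rho_r+1$, and size parameter
\[
g \;:=\; |b_{n_0,r}|\, N^{\rho_r-n_0}.
\]
Because $\sum_j m_j h_j^{n_0}$ is a nonzero integer (hence of absolute value at least $1$), $g\gg_f N^{\{\rho_r\}}$; together with $|m_j|\leq N^\eta$ one also has $g\ll_{f,\mathcal H} N^{\rho_r+\eta}$, so $g\in[1,N^{q+2}]$ for $N$ large.

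The main technical step will be verifying the two-sided derivative bound $c_1 g x^{-s}\leq |F_1^{(s)}(x)|\leq c_2 g x^{-s}$ for every $x\in[N,2N]$ and every $s=1,\ldots,q+k+2$, with $c_1,c_2$ depending only on $f,\mathcal H,k,q$ and \emph{not} on $\mathbf m$. The leading $(n_0,r)$-term differentiates to $b_{n_0,r}\prod_{j=0}^{s-1}(\rho_r-n_0-j)\,x^{\rho_r-n_0-s}$, which never vanishes because $\rho_r\notin\Z$ keeps each factor nonzero. The careful choice $\eta\leq(\rho_r-\rho_{r-1})/2$ forces every other exponent $\rho_i-n$ appearing in $F_1$ to satisfy $\rho_i-n\leq\rho_r-n_0-2\eta$: either $i<r$, whence $\rho_i\leq\rho_{r-1}\leq\rho_r-2\eta$, or $n>n_0$, whence the exponent drops by at least $1\geq 2\eta$. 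Combined with the uniform bounds $|b_{n_0,r}|\geq c_0(f)>0$ and $|b_{n,i}|\ll_{f,\mathcal H} N^\eta$ from Proposition \ref{prop_lin_comb}, each subdominant contribution to $F_1^{(s)}(x)$ on $[N,2N]$ is at most $O(N^{-\eta})$ times the leading one, giving the desired asymptotic for $N$ large.

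With this derivative control in place, Lemma \ref{lem_BKM+_reformulated} outputs
\[
\Bigl|\sum_{N\leq n<2N}e(F_1(n)+P(n))\Bigr| \;\ll_{f,\mathcal H,k}\; N^{1-1/K} + N\Bigl(\frac{(\log N)^k}{g}\Bigr)^{1/K} + N\Bigl(\frac{g}{N^{q+2}}\Bigr)^{\!1/(4QK-2K)}.
\]
Substituting $g\gg N^{\{\rho_r\}}$ into the middle term makes it $\ll N^{1-\{\rho_r\}/K+o(1)}$, while substituting $g\ll N^{\rho_r+\eta}$ into the third (together with $q+2>\rho_r+\eta$) makes it $\ll N^{1-\delta/(4QK-2K)}$ for some $\delta>0$; each therefore saves a power of $N$, producing the desired $\alpha>0$ depending only on $k$ and $\rho_r$. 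The main obstacle throughout is the derivative-size verification in the middle paragraph: the precise definition of $\eta$ is exactly what lets the leading term of $F_1$ dominate every other term uniformly in $\mathbf m$, enabling the clean application of Lemma \ref{lem_BKM+_reformulated}.
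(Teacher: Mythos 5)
Your proposal is correct and follows essentially the same route as the paper: split off the error $R$ from Proposition \ref{prop_lin_comb} via $|e(R(n))-1|\ll|R(n)|$, then apply Lemma \ref{lem_BKM+_reformulated} to $F_1+P$ with $g=|b_{n_0,r}|N^{\rho_r-n_0}$ and $q+2>\rho_r+1$ (the paper takes $q=\ell-1$), using exactly the same role of $\eta$ to make the leading term of $F_1^{(s)}$ dominate uniformly in $\mathbf m$ and $\rho_r\notin\Z$ to keep the derivative factors nonzero. The verification of the two-sided derivative bounds and the final substitution of $1\ll|b|\ll N^{\eta}$ into the three terms of the lemma match the paper's argument.
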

\begin{proof}
By Proposition \ref{prop_lin_comb}, we can write
\begin{align*}
    \langle \mathbf{m}, \bar f(x)\rangle = f_1(x) + R(x),
\end{align*}
where $f_1$ is of the form $\sum_{n = n_0}^{\ell - 1}\sum_{i = 1}^r b_{n,i}x^{\rho_i - n}$, with $b := b_{n_0,r} =  \left( \sum_{j=1}^\ell m_j h_j^{n_0} \right) d_r \binom{\rho_r}{n_0} \neq 0$,  and 
$$ R(x) \ll_{f,\mathcal{H}} x^{\{\rho_r\} - 1 + \eta}. $$
Notice that our choice of $\eta$ ensures that $\{\rho_r\} - 1 + \eta < 0$. By the triangle inequality, we have that for any polynomial $P$, 
\begin{align*}
    \left|\sum_{N \leq n < 2N}e(\langle \mathbf{m}, \bar f(n)\rangle + P(n)) \right| &\leq  \left|\sum_{N \leq n < 2N}e(f_1(n) + P(n)) \right| \\
    &\qquad+ \left|\sum_{N \leq n < 2N}e(f_1(n) + P(n) + R(n)) - \sum_{N \leq n < 2N}e(f_1(n) + P(n)) \right|\\
    &= \left|\sum_{N \leq n < 2N}e(f_1(n) + P(n)) \right|\\
    &\qquad+ \left|\sum_{N \leq n < 2N} e(f_1(n) + P(n))\left(e(R(n)) -1 \right) \right| .
\end{align*}
Call the first term $S_1$ and the second term $S_2$. We have that for any $N \geq 1$, 
\begin{align*}
    S_2 &\leq \sum_{N \leq n < 2N}\left| e(f_1(n) + P(n)) \right| \left| e(R(n)) -1 \right|\\
    &= \sum_{N \leq n < 2N} \left| e(R(n)) -1 \right|\\
    &= \sum_{N \leq n < 2N}\left| \sum_{k=1}^\infty \frac{(2\pi i)^k}{k!}(R(n))^k  \right|\\
    &\ll_{f,\mathcal{H}} \sum_{k=1}^\infty \frac{(2\pi)^k}{k!} \sum_{N \leq n < 2N} n^{\{\rho_r\} - 1 + \eta}\\
    &\ll_{f,\mathcal{H}} N^{\{\rho_r\} + \eta}\\
    &\ll_{f,\mathcal{H}} N^{\frac{1+ \{\rho_r\}}{2}}.\\
\end{align*}

To get an upper bound on $S_1$, we use Lemma \ref{lem_BKM+_reformulated}. The function $f_1$ can be written as
\begin{align*}
    f_1(x) = b x^{\rho_r - n_0} + \sum_{i = 1}^{r'} b_{i} x^{\rho_i'},
\end{align*}
where $\rho_1' < \rho_2' < \ldots < \rho_{r'}' < \rho_r - n_0$, $|b|,|b_i| \ll_{f, \mathcal{H}} N^\eta$, and $r' \leq \ell r$. This function is smooth on $(0,\infty)$, and for every $s \in \N \cup \{0 \}$, we have that
\begin{align*}
    f_1^{(s)}(x) = b \mathrm{P}^{\rho_r-n_0}_{s}x^{\rho_r - n_0 - s} + \sum_{i = 1}^{r'} b_{i} \mathrm{P}^{\rho^\prime_i}_{s} x^{\rho_i' - s},
\end{align*}
where $\mathrm{P}_{n}^{\rho} = \rho(\rho - 1)\ldots (\rho-n+1)$ for $\rho \in \R$ and $n \in \Z^+$. Using this fact together with the uniform bound for the $b_i$'s from the conclusion of \cref{prop_lin_comb}, we get that for each $0 \leq s \leq \ell + k + 1$,
\begin{align*}
    \left| \sum_{i = 1}^{r'} b_{i} \mathrm{P}^{\rho^\prime_i}_{s}x^{\rho_i' - s} \right| 
    &\ll_{f,\mathcal{H},k} x^{\max\{\rho_{r - 1} - n_0, \rho_r - n_0 - 1\} + \eta - s}.
\end{align*}
Since $\eta < \min \bigl\{\frac{1 - \{\rho_r\}}{2}, \rho_r - \rho_{r-1} \bigr\},$ there exists some integer $N_0 = N_0(f,\mathcal{H}, \eta)$ such that for any $N_0\leq N \leq x \leq 2N$ and any $0 \leq s \leq \ell + k + 1$, we have
\begin{align*}
    c_1 |b| N^{\rho_r - n_0} N^{-s} \leq |f_1^{(s)}(x)| \leq c_2  |b| N^{\rho_r - n_0} N^{-s},
\end{align*}
where 
\begin{align*}
    c_1 = \frac{1}{2} \min_{\substack{0 \leq n \leq \ell - 1 \\ 0 \leq t \leq \ell + k + 1}} \left| \mathrm{P}^{\rho_r - n}_{t} \right|,\qquad
    c_2 = 2^{\rho_r + 1} \max_{\substack{0 \leq n \leq \ell - 1 \\ 0 \leq t \leq \ell + k + 1}} \left| \mathrm{P}^{\rho_r - n}_{t} \right|.
\end{align*}
Notice that since $\rho_r \not \in \Z$, we have that $c_1, c_2 > 0$. Also, notice that $c_1$ and $c_2$ depend only on $f$ and $k$. Therefore, by Lemma \ref{lem_BKM+_reformulated} with $k$ as the degree of the polynomial $P(x)$ and $q = \ell - 1$, there exists some constant $C' > 0$ that depends only on $f$ and $k$ such that for every polynomial $P$ of degree at most $k$ and every $N \geq N_0$,
\begin{equation}\label{ineq}
    S_1 \ll_{f, \mathcal{H},k} \left( N^{1-\frac{1}{2^k}} + N\left( \frac{(\log N)^k}{bN^{\{\rho_r\}}}\right)^{1/2^k} + N \left( \frac{|b|N^{\rho_r}}{N^{\ell+1}}\right)^{\frac{1}{2^{k+\ell + 1} - 2^k}}\right).
\end{equation}
Since $b = \left( \sum_{j=1}^\ell m_j h_j^{n_0} \right) d_r \binom{\rho_r}{n_0} \neq 0$ for some $n_0 \in \{0 , \ldots, \ell - 1\}$ where $\mathbf{m} = (m_1, \ldots, m_\ell) \in \Z^{\ell} \setminus\{\overline{0}\}$ and $\max\limits_{1 \leq j \leq \ell} |m_j| \leq N^{\eta}$, we have
\begin{align*}
    1 \ll_{f,\mathcal{H}} |b| = \left( \sum_{j=1}^\ell m_j h_j^n \right) d_r \binom{\rho_r}{n} \ll_{f,\mathcal{H}} N^\eta < N.
\end{align*}
Also, for any $\epsilon > 0$, $\left(\log N \right)^{k/K} \ll_{k,\varepsilon} N^{\epsilon}$. Combining all of this, we get 
\[
S_1 \ll_{f, \mathcal{H}, k, \varepsilon}  \left( N^{1-\frac{1}{2^k}} + N^{1 - \left(\frac{\{\rho_r\}}{2^k} - \varepsilon \right)}+ N^{1 - \frac{1- \{\rho_r\}}{2^{k + \ell + 1} - 2^{k+1}}} \right).
\]
Choosing $\epsilon > 0$ small enough and combining this with the estimate on $S_2$ yields the desired conclusion for some $\alpha>0$ depending only on $f$ and $k$.\\
\end{proof}
The uniformity over all polynomials of a bounded degree in \cref{be} allows us to restrict the sum on the left side of \eqref{eqn_uni_dis_of_gen_poly_in_arith_prog_1} to a certain congruence class without changing the upper bound.

\begin{Corollary} \label{corbdd}
Under the hypotheses of Proposition \ref{be}, uniformly over all real polynomials $P$ of degree at most $k$, all $N\in\N$ large enough depending on $f$, $\mathcal{H}$ and $k$, all $\mathbf{m} = (m_1, \ldots, m_\ell) \in \Z^\ell\setminus\{\bar{0}\}$ with $\max\limits_{1 \leq j \leq \ell} |m_j| \leq N^{\eta}$, all $q\in\N$, and all $a\in\{0,1,\ldots,q-1\}$, we have
    \begin{equation*}
    \left| \sum_{\substack{N\leq n < 2N\\ n\equiv a\bmod q}}e(\langle \mathbf{m}, \bar f(n)\rangle + P(n)) \right| \ll_{f, \mathcal{H}, k} N^{1-\alpha},
    \end{equation*}
    where $\alpha$ and the implied constant are the same as in Proposition \ref{be}.
\end{Corollary}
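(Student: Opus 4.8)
The plan is to reduce the sum restricted to the congruence class $n \equiv a \bmod q$ to a full sum over a dyadic interval, so that \cref{be} applies directly. Write $n = qk + a$; as $n$ ranges over integers in $[N, 2N)$ with $n \equiv a \bmod q$, the variable $k$ ranges over integers in an interval $[N', 2N')$ roughly of the form $[(N-a)/q, (2N-a)/q)$. The key observation is that substituting $n = qk+a$ turns $\langle \mathbf{m}, \bar f(n)\rangle + P(n)$ into $\langle \mathbf{m}, \bar g(k)\rangle + \widetilde{P}(k)$, where $\bar g(k) := (f(qk + a + h_1), \ldots, f(qk+a+h_\ell))$ and $\widetilde P(k) := P(qk+a)$ is again a real polynomial of degree at most $k$.

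The subtlety is that $\bar g$ is \emph{not} quite of the form $\bar f$ with a new shift set: the "base point" is $qk$ rather than $k$, and the shifts $a + h_i$ need not be integers scaled correctly. However, this does not matter, because what \cref{be} — and behind it \cref{prop_lin_comb} and \cref{lem_BKM+_reformulated} — actually uses about $\bar f$ is only (i) that $\langle \mathbf{m}, \bar f(x)\rangle$ agrees, up to an error that is a negative power of $x$, with a function $f_1(x) = \sum_{n=n_0}^{\ell-1}\sum_i b_{n,i} x^{\rho_i - n}$ whose leading coefficient is nonzero and whose coefficients are $O(N^\eta)$, and (ii) the resulting derivative bounds $c_1 |b| N^{\rho_r - n_0 - s} \le |f_1^{(s)}| \le c_2 |b| N^{\rho_r - n_0 - s}$. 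The cleanest way to present the corollary, then, is to rerun the computation of \cref{prop_lin_comb} for $f(qx + a + h_j) = \sum_i d_i (qx + (a+h_j))^{\rho_i} = \sum_i d_i q^{\rho_i} x^{\rho_i}(1 + (a+h_j)/(qx))^{\rho_i}$, Taylor-expanding as before; one obtains an expression $\langle \mathbf m, \bar g(x)\rangle = g_1(x) + O((qx)^{\{\rho_r\} - 1 + \eta})$ with $g_1(x) = \sum_{n=n_0}^{\ell-1}\sum_i \widetilde b_{n,i} x^{\rho_i - n}$, whose leading coefficient is $\bigl(\sum_j m_j (a+h_j)^{n_0}\bigr) d_r q^{\rho_r}\binom{\rho_r}{n_0}$ — and crucially $n_0$ can still be chosen so this is nonzero, by the same Vandermonde argument applied to the points $a + h_1, \ldots, a+h_\ell$, which are distinct because the $h_j$ are. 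The coefficient bounds pick up a harmless factor $q^{\rho_r} \le N^{\rho_r \theta}$-type term; since we only need the bound for $q \le N^\theta$ (the only regime in which this corollary is used), this stays within the allowed $O(N^{1-\alpha})$.

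Given this, I would then feed $g_1$ and $\widetilde P$ into \cref{lem_BKM+_reformulated} exactly as in the proof of \cref{be}, with $q_{\mathrm{Lemma}} = \ell - 1$ and $k$ the degree of $P$, obtaining $|\sum_{N' \le k < 2N'} e(\langle \mathbf m, \bar g(k)\rangle + \widetilde P(k))| \ll_{f,\mathcal H,k} (N')^{1-\alpha'} \ll N^{1-\alpha}$ for a possibly smaller $\alpha > 0$ depending only on $f$ and $k$ (absorbing the $q^{\rho_r}$ factors costs only an arbitrarily small power of $N$). If $N' \ll 1$, i.e. $q$ is comparable to $N$, the sum has $O(1)$ terms and the bound is trivial. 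I expect the main obstacle to be purely bookkeeping: checking that the error term $O((qx)^{\{\rho_r\}-1+\eta})$ and the inflated coefficient sizes $O(q^{\rho_r} N^\eta)$ do not overwhelm the gain, which works precisely because $\{\rho_r\} - 1 + \eta < 0$ with room to spare and because we may restrict to $q \le N^\theta$ with $\theta$ small; no new analytic input beyond \cref{prop_lin_comb}, \cref{lem_BKM+_reformulated} and \cref{be} is needed. Alternatively — and this is probably how the authors actually do it — one notes that \cref{be} was stated \emph{uniformly over all real polynomials $P$ of degree at most $k$}, so one can detect the congruence condition $n \equiv a \bmod q$ by additive characters only when $q$ is fixed and small; but the substitution argument above is more robust and self-contained, so that is the route I would take.
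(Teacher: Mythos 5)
The route you should have taken is the one you mention and then discard in your last sentence, and your reason for discarding it is based on a misconception. The paper's proof is exactly the orthogonality trick: write
\[
\mathbf{1}_{q\Z+a}(n)=\frac{1}{q}\sum_{0 \leq t \leq q-1} e\!\left( \frac{(n-a)t}{q}\right),
\]
insert this into the restricted sum, and bound each of the $q$ resulting sums $\bigl|\sum_{N\leq n<2N}e(\langle\mathbf{m},\bar f(n)\rangle+P(n)+tn/q)\bigr|$ by \cref{be}, since $P(n)+tn/q$ is again a real polynomial of degree at most $k$ (recall $k\geq 1$). There is no restriction to ``$q$ fixed and small'': \cref{be} is uniform over \emph{all} real polynomials of degree at most $k$, with no bound on their coefficients, so the rational coefficient $t/q$ causes no trouble; and the prefactor $1/q$ means that averaging $q$ bounds of size $O(N^{1-\alpha})$ still gives $O(N^{1-\alpha})$ for every $q\in\N$, with literally the same $\alpha$ and implied constant. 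This is a three-line proof.

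Your substitution route $n=qk+a$, by contrast, does not prove the corollary as stated. First, it only covers $q\leq N^{\theta}$ (needed to keep the inflated coefficients $\asymp q^{\rho_r}(a+h_j)^{n}$ and the Taylor error under control) together with $q\asymp N$; the intermediate range $N^{\theta}<q\ll N$ is left open. This is repairable — for $q>N^{\theta}$ the trivial bound $N/q+1\ll N^{1-\theta}$ already suffices after shrinking the final exponent to $\min\{\alpha',\theta\}$ — but you did not say so, and in any case the resulting exponent is no longer ``the same $\alpha$ as in \cref{be}'', which the corollary explicitly asserts (and which matters later, since $\theta$ in \cref{l13} is chosen in terms of that exponent). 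Second, the claim that the corollary ``is only used for $q\leq N^{\theta}$'' is importing information from downstream applications into the proof of a statement that is quantified over all $q\in\N$. The bookkeeping you sketch (Vandermonde applied to the shifted nodes $a+h_j$, the lower bound $|\sum_j m_j(a+h_j)^{n_0}|\geq 1$, the size check $g\leq X^{q+2}$ in \cref{lem_BKM+_reformulated}) is essentially sound for small $q$, so the approach is salvageable, but it is strictly harder and strictly weaker than the character-sum argument you rejected.
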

\begin{proof}

Recall that
\[
\mathbf{1}_{q\Z+a}(n)=\frac{1}{q}\sum_{0 \leq t \leq q-1} e\left( \frac{(n-a)t}{q}\right).
\]
Therefore, we have
\begin{align*}
\left| \sum_{\substack{N\leq n < 2N \\ n\equiv a\bmod q}} e(\langle \mathbf{m}, \bar f(n)\rangle+P(n)) \right|
&=
\left| \sum_{N\leq n < 2N } e(\langle \mathbf{m}, \bar f(n)\rangle+P(n)) \mathbf{1}_{q\Z+a}(n) \right|
\\
&=
\left| \sum_{N\leq n < 2N } e(\langle \mathbf{m}, \bar f(n)\rangle+P(n)) \left(\frac{1}{q}\sum_{0 \leq t \leq q-1} e\left( \frac{(n-a)t}{q}\right)\right) \right|
\\
&\leq \frac{1}{q}\sum_{0 \leq t \leq q-1}
\left| \sum_{N\leq n < 2N } e\left(\langle \mathbf{m}, \bar f(n)\rangle+P(n)+\frac{(n-a)t}{q}\right) \right|
\\
&= \frac{1}{q}\sum_{0 \leq t \leq q-1}\left| \sum_{N\leq n < 2N } e\left(\langle \mathbf{m}, \bar f(n)\rangle+P(n)+\frac{tn}{q}\right) \right|.
\end{align*}
Hence, the result follows from \cref{be}.\\
\end{proof}

Now, we prove some key results that will allow us to prove Hypotheses $\ref{H2}$ and \ref{H4} of \cref{maynard} in Section \ref{lastsection}. In fact, the following result is stronger than what is needed for Hypothesis \ref{H2}.

\begin{Proposition}\label{l13}
Let $P(x)$ be a real polynomial of degree at most $k \geq 1$ and let $f(x) = \sum_{j=1}^r d_j x^{\rho_j}$ be a polynomial with non-integer exponents with $0 < \rho_1 < \rho_2 < \ldots < \rho_r$ and $d_r \neq 0$. Define $\ell =\lfloor \rho_r\rfloor+1$. Then, there exist some constants $0 < \beta, \theta < 1$, depending only on $f(x)$ and $k$ such that the following holds:

Let $\mathcal{H} = \{h_1, \ldots h_{\ell}\}$ be a set of integers with $h_1 < \ldots < h_\ell$, and let $U_\ell = \prod_{i=1}^\ell [u_i, v_i)$ with $0 \leq u_i < v_i \leq 1$. Then, for any integer $N \in \N$ such that $h_\ell \leq N$, if we write
\[
\mathcal{B}=\{n\in\N:(\{f(n+h_1)+P(n + h_1)\}, \ldots, \{f(n+h_\ell)+P(n + h_\ell)\})\in U_\ell \},
\]
 then,
    \begin{equation}
    \label{eqn_disc_of_gen_poly_in_arith_prog_1}
        \sum_{q\leq N^{\theta}} \max_{0 \leq c \leq q-1} \left|\sum_{\substack{N\leq n < 2N \\ n \equiv c \bmod q}}\mathbf{1}_{\mathcal{B}}(n)-\frac{1}{q} \sum_{N\leq n < 2N}\mathbf{1}_{\mathcal{B}}(n)\right| \ll_{f,\mathcal{H}, k, U_\ell} \frac{1}{N^\beta}\sum_{N\leq n < 2N}\mathbf{1}_{\mathcal{B}}(n).
    \end{equation}
\end{Proposition}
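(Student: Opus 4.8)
### Proof Strategy for Proposition \ref{l13}

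The plan is to reduce the claimed discrepancy bound to the exponential sum estimate of \cref{corbdd} via the \Erdos-\Turan-Koksma inequality (\cref{discbd}), after first handling the subtlety that $\mathcal{B}$ is defined by a condition on the \emph{vector} $(\{f(n+h_1)+P(n+h_1)\},\ldots,\{f(n+h_\ell)+P(n+h_\ell)\})$ lying in a box $U$. First I would fix notation: write $\overline{G}(n) := (f(n+h_1)+P(n+h_1),\ldots,f(n+h_\ell)+P(n+h_\ell)) \bmod \Z^\ell \in [0,1)^\ell$, so that $\mathbf{1}_{\mathcal{B}}(n) = \mathbf{1}_U(\overline{G}(n))$. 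For a fixed modulus $q$ and residue $c$, the inner quantity is
\[
\Big|\sum_{\substack{N\leq n<2N\\ n\equiv c\bmod q}}\mathbf{1}_U(\overline{G}(n)) - \tfrac1q\sum_{N\leq n<2N}\mathbf{1}_U(\overline{G}(n))\Big|,
\]
which I would bound by $\big(\#\{N\leq n<2N: n\equiv c\bmod q\}\big)\cdot D\big((\overline G(n))_{n\equiv c}\big) + \big(\tfrac1q \cdot N\big)\cdot D\big((\overline G(n))_{N\leq n<2N}\big) + O(1)$ — i.e.\ approximate \emph{each} of the two counting sums by $(\text{length})\cdot\lambda_\ell(U)$ using the respective discrepancies, and observe the two main terms $\approx (N/q)\lambda_\ell(U)$ cancel. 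So it suffices to show that both $D\big((\overline G(n))_{N\leq n<2N}\big)$ and $\max_{c}D\big((\overline G(n))_{N\leq n<2N,\ n\equiv c\bmod q}\big)$, summed/maximized appropriately over $q\leq N^\theta$, contribute $\ll N^{-\beta}$ relative to $N$; since $\sum_{N\leq n<2N}\mathbf{1}_{\mathcal B}(n)$ could be small, I must be careful — but the cleanest route is to prove an \emph{absolute} bound, namely that the left side of \eqref{eqn_disc_of_gen_poly_in_arith_prog_1} is $\ll_{f,\mathcal H,k,U} N^{1-\beta'}$ for some $\beta'>0$, and \emph{separately} that $\sum_{N\leq n<2N}\mathbf 1_{\mathcal B}(n) \gg_{f,\mathcal H,U} N^{1-\beta'/2}$ (using that the full-range discrepancy is $o(1)$ and $\lambda_\ell(U)>0$, so the count is $\geq \tfrac12 \lambda_\ell(U) N$ for $N$ large), and then divide.

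The heart is the discrepancy estimate for the restricted sequence. Fix $q\leq N^\theta$ and $c\in\{0,\ldots,q-1\}$. Apply \cref{discbd} with parameter $H := \lceil N^{\eta}\rceil$ (so that the leading error term $1/(H+1)$ is $\ll N^{-\eta}$), giving
\[
D\big((\overline G(n))_{n\equiv c}\big) \ll_\ell N^{-\eta} + \sum_{\substack{\mathbf m\in\Z^\ell\setminus\{\bar 0\}\\ \|\mathbf m\|_\infty\leq H}} \frac{1}{r(\mathbf m)}\Big|\frac{1}{M_c}\sum_{\substack{N\leq n<2N\\ n\equiv c\bmod q}} e(\langle \mathbf m,\overline G(n)\rangle)\Big|,
\]
where $M_c = \#\{N\leq n<2N: n\equiv c\bmod q\}$. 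Now $\langle\mathbf m,\overline G(n)\rangle = \langle\mathbf m,\bar f(n)\rangle + \widetilde P(n)$ where $\widetilde P(n) = \sum_j m_j P(n+h_j)$ is a real polynomial of degree $\leq k$ (its degree is bounded independent of $\mathbf m$, which is exactly why the \emph{uniformity over all polynomials $P$ of degree $\leq k$} in \cref{corbdd} is indispensable). Since $\|\mathbf m\|_\infty\leq H\leq N^{2\eta}\cdot(\text{const})$ — here I may need $H\approx N^\eta$ exactly so the hypothesis $\max_j|m_j|\leq N^\eta$ of \cref{corbdd} is met; if not I take $H=\lfloor N^\eta\rfloor$ and accept a $1/(H+1)\ll N^{-\eta}$ term — \cref{corbdd} gives $|\sum_{n\equiv a\bmod q} e(\langle\mathbf m,\bar f(n)\rangle+\widetilde P(n))|\ll_{f,\mathcal H,k} N^{1-\alpha}$ \emph{uniformly in $\mathbf m$, $q$, $a$}. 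The number of lattice points $\mathbf m$ with $\|\mathbf m\|_\infty\leq H$ is $\ll H^\ell \ll N^{\ell\eta}$, and $\sum_{\mathbf m} 1/r(\mathbf m) \ll (\log H)^\ell \ll (\log N)^\ell$. So, provided $M_c \gg N/q \gg N^{1-\theta}$ (true for $q\leq N^\theta$, up to the integer-part fudge $M_c \geq \lfloor N/q\rfloor \geq N^{1-\theta}-1$), we get
\[
D\big((\overline G(n))_{n\equiv c}\big) \ll_{f,\mathcal H,k,\ell} N^{-\eta} + \frac{q}{N}\cdot (\log N)^\ell \cdot N^{1-\alpha} \ll N^{-\eta} + N^{\theta-\alpha}(\log N)^\ell.
\]
Summing the contribution $M_c\cdot D \ll (N/q)\cdot\big(N^{-\eta}+N^{\theta-\alpha+o(1)}\big)$ over $q\leq N^\theta$ (there are $\leq N^\theta$ values of $q$, each contributing $\leq N\cdot(N^{-\eta}+N^{\theta-\alpha+o(1)})$ after the max over $c$ and the fact $\sum_q N/q \ll N\log N$), the whole left side of \eqref{eqn_disc_of_gen_poly_in_arith_prog_1} is $\ll_{f,\mathcal H,k} N\log N\cdot(N^{-\eta}+N^{\theta-\alpha+o(1)})$. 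Choosing $\theta < \alpha/2$ (and recalling $\alpha=\alpha(k,\rho_r)>0$ from \cref{be}), this is $\ll N^{1-\beta'}$ for some $\beta'>0$ depending only on $f$ and $k$.

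Finally, combining with the lower bound $\sum_{N\leq n<2N}\mathbf 1_{\mathcal B}(n)\geq \tfrac12\lambda_\ell(U)N$ for $N$ large (itself a consequence of the full-range discrepancy being $\ll N^{-\beta'}\to 0$), we conclude the left side of \eqref{eqn_disc_of_gen_poly_in_arith_prog_1} is $\ll_{f,\mathcal H,k,U} N^{-\beta'}\cdot N \ll_{f,\mathcal H,k,U} N^{-\beta}\sum_{N\leq n<2N}\mathbf 1_{\mathcal B}(n)$ with, say, $\beta = \beta'/2$, for $N$ large (and for small $N$ the bound is vacuous after adjusting the implied constant). The main obstacle I anticipate is bookkeeping: carefully tracking the three competing error exponents ($-\eta$ from the Koksma truncation, $\theta-\alpha$ from the exponential sum over an arithmetic progression, and the polynomial-count $N^{\ell\eta}$ absorbed into $(\log N)^\ell$ via the $1/r(\mathbf m)$ weights) and verifying that a single choice of $\theta$ (small, in terms of $\alpha$ and $\eta$) makes all of them, times the loss $q/M_c \leq N^\theta/N^{1-\theta}\cdot N = N^{2\theta}$ from summing over residues, still beat $N$. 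The other place requiring care is ensuring the box $U$ being a genuine product box (not a general measurable set) is used only through \cref{discbd}, whose supremum is over exactly such boxes — so no extra work is needed there.
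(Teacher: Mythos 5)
Your proposal is correct and follows essentially the same route as the paper: apply the \Erdos--\Turan--Koksma inequality with $H=\lfloor N^\eta\rfloor$ to both the full sequence and its restriction to each residue class, invoke \cref{be} and \cref{corbdd} (absorbing $\langle\mathbf m,\overline P\rangle$ as a degree-$\leq k$ polynomial), sum over $q\leq N^\theta$ with $\theta$ small compared to $\alpha$, and convert the absolute bound $N^{1-\beta'}$ into a relative one via $\sum_{N\leq n<2N}\mathbf 1_{\mathcal B}(n)\gg_U N$. The bookkeeping details you flag are handled in the paper exactly as you anticipate.
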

\begin{proof}

Let $\mathbf{x}_n = (\{f(n+h_1) + P(n+h_1)\}, \ldots \{f(n + h_{\ell}) + P(n + h_{\ell})\})$. By $\ell$-dimensional \Erdos-Turán-Koksma inequality (Theorem \ref{discbd}), we have
\begin{align*}
D\left((\bar{f}(n)+\bar{P}(n))_{n \in [N,2N)}\right) \ll_\ell \left( \frac{1}{H + 1} + \sum_{0 < ||\mathbf{m}||_{\infty} \leqslant H} \frac{1}{r(\mathbf{m})}\left| \frac{1}{N} \sum_{N \leqslant n \leqslant 2N} e\left( \langle \mathbf{m},\mathbf{x}_n\rangle\right)\right|\right)
\end{align*}
 for any integer $H$, where $J$ is the set of all semi open rectangles of the form $\prod_{i=1}^{\ell}[a_i, b_i)$. We take $H = \lfloor N^{\eta} \rfloor$. Notice that we have the following:
\begin{align*}
    \left|\sum_{N \leqslant n \leqslant 2N} e\left(\langle \mathbf{m},\mathbf{x}_n\rangle \right)\right| 
    &= \left|\sum_{N \leqslant n \leqslant 2N} e\left(\langle \mathbf{m},\bar f(n) + \overline{P}(n) \rangle \right)\right| \\
    &= \left|\sum_{N \leqslant n \leqslant 2N} e\left(\langle(\mathbf{m},\bar f(n)\rangle+\langle \mathbf{m},\overline{P}(n)\rangle\right)\right|,
\end{align*}
where $\bar f(n) = (f(n + h_1), \ldots f(n + h_\ell))$ and $\overline{P}(n) = (P(n + h_1), \ldots P(n + h_\ell))$.
Since $\langle \mathbf{m},\overline{P}(n)\rangle$ is again a polynomial of degree at most $k$, by using \cref{be} and the fact that $0 < ||\mathbf{m}||_{\infty} \leqslant H \leqslant N^{\eta}$, we get that 
\begin{align*}
    D\left((\bar{f}(n)+\bar{P}(n))_{n \in [N,2N)}\right)
    &\ll_{f,\mathcal{H}, k} \left( \frac{1}{H + 1} + \sum_{0 < ||\mathbf{m}||_{\infty} \leqslant H} \frac{1}{r(\mathbf{m})}\frac{1}{N} N^{1 - \beta''}\right) \\
    &=\left( \frac{1}{H + 1} + N^{-\beta''}\sum_{0 < ||\mathbf{m}||_{\infty} \leqslant H}\frac{1}{r(\mathbf{m})}\right)
\end{align*}
for some $0 < \beta'' < 1/2$, depending only on $f(x)$ and the degree of $P(x)$.

Now, it remains to bound the sum $\sum\limits_{\substack{0 < ||\mathbf{m}||_{\infty} \leqslant H}}\frac{1}{r(\mathbf{m})}$. We have that
    \begin{align*}
    \sum_{0 < ||\mathbf{m}||_{\infty} \leqslant H}\frac{1}{r(\mathbf{m})} 
    &= \sum_{\substack{L \subset \{1,\ldots \ell\} \\ L \neq \varnothing}} \sum_{\substack{0 < |m_j| \leq H \\ j \in L}} \prod_{j \in L}\frac{1}{|m_j|}\\ 
    &=\sum_{\substack{L \subset \{1,\ldots \ell\} \\ L \neq \varnothing}} \prod_{j \in L} \sum_{0 < |m_j| \leqslant H} \frac{1}{|m_j|}\\
    &\ll \sum_{\substack{L \subset \{1,\ldots \ell\} \\ L \neq \varnothing}} \left( \log H \right)^{|L|},
    \end{align*}
    so, we have
    \[
    \sum_{0 < ||\mathbf{m}||_{\infty} \leqslant H}\frac{1}{r(\mathbf{m})} \ll_\ell \left( \log H \right)^{\ell} \ll_{f, k} N^{\beta''/2}.
\]
Putting everything together, we get \[
    D\left((\bar{f}(n)+\bar{P}(n))_{n \in [N,2N)}\right) \ll_{f,\mathcal{H},k} N^{-\beta'}
\]
for $\beta' = \min \{\eta, \beta''/2\}$. In particular, by the definition of $\mathcal{B}$ and since $U_\ell = \prod_{i=1}^\ell[u_i, v_i)$, it follows from the definition of the discrepancy that
\[
\left| \frac{1}{N}\sum_{N\leq n < 2N} \mathbf{1}_{\mathcal{B}}(n)-\lambda_\ell(U_\ell)\right| \ll_{f,\mathcal{H}, k} N^{-\beta'}, \]
i.e.
\begin{equation}\label{equ_no_congruence}
\sum_{N\leq n < 2N} \mathbf{1}_{\mathcal{B}}(n) = N\lambda_\ell(U_\ell) + O_{f,\mathcal{H}, k}(N^{1-\beta'}).
\end{equation}
Fix some $0 < \theta < 1$ to be determined later. Let $q,c$ be integers with $0 \leq c < q \leq N^{\theta}$, and define $\mathbf{y}_n = (\{f(qn+c+h_1) + P(qn+c+h_1)\}, \ldots \{f(qn+c+h_{\ell}) + P(qn+c+h_{\ell})\}) $. Note that we have $$\sum\limits_{\substack{N\leq n < 2N \\ n \equiv c \bmod q}} \mathbf{1}_{\mathcal{B}}(n) = \sum\limits_{\substack{N \leq nq+c < 2N}} \textbf{1}_{U_\ell}(\mathbf{y}_n) = \sum\limits_{n \in I} \textbf{1}_{\mathcal{B'}}(n),$$
where
$\mathcal{B'} = \{ n \in \N : \mathbf{y}_n \in U_\ell \}$ and
$$ I = \left\{\left \lceil \frac{N-c}{q} \right \rceil, \left \lceil \frac{N-c}{q} \right \rceil + 1, \ldots,\left \lceil \frac{2N-c}{q} \right \rceil  -1 \right\}.$$
Hence, by applying the Erdös-Turán-Koksma inequality, but this time to the sequence $(\mathbf{y}_n)_{n \in I}$, we get that for any $H\in \N$,
\begin{align*}
    D\left(\left(\bar{f}(n)+\bar{P}(n)\right)_{\substack{{n \in [N,2N)} \\ n \equiv c \bmod{q}}}\right) &\ll_\ell  \left( \frac{1}{H + 1} + \sum_{0 < ||\mathbf{m}||_{\infty} \leqslant H} \frac{1}{r(\mathbf{m})}\left| \frac{1}{\# I} \sum_{n \in I} e\left(\langle \mathbf{m},\mathbf{y}_n\rangle\right)\right|\right) \\
    &=  \left( \frac{1}{H + 1} + \sum_{0 < ||\mathbf{m}||_{\infty} \leqslant H} \frac{1}{r(\mathbf{m})}\left| \frac{1}{\# I} \sum_{\substack{N\leq n < 2N \\ n \equiv c \bmod q}}e\left(\langle \mathbf{m},\mathbf{x}_n\rangle\right)\right|\right).
\end{align*}
Again, by taking $H = \lfloor N^\eta \rfloor$, using \cref{corbdd} and following the exact same treatment as above, we get that
\begin{equation}\label{equ_congruence}
\sum\limits_{\substack{N\leq n  < 2N \\ n \equiv c \bmod q}} \mathbf{1}_{\mathcal{B}}(n) = \sum_{N\leq n < 2N} \mathbf{1}_{\mathcal{B'}}(n) = \#I \lambda_\ell(U_\ell) + O_{f, \mathcal{H}, k}(N^{1-\beta'})
\end{equation}
for $\beta'>0.$
Using that and the fact that $\#I - \frac{N}{q} = O(1),$ we get 
\[
\left|\sum_{\substack{N\leq n < 2N \\ n \equiv c \bmod q}}\textbf{1}_{\mathcal{B}}(n)-\frac{1}{q} \sum_{N\leq n < 2N}\textbf{1}_{\mathcal{B}}(n)\right| \ll_{f,\mathcal{H}, k, U_\ell} N^{1-\beta'}. 
\]
Hence, we get 
$$
\sum_{q\leq N^{\theta}} \max_{0 \leq c \leq q-1} \left|\sum_{\substack{N\leq n < 2N \\ n \equiv c \bmod q}}\textbf{1}_{\mathcal{B}}(n)-\frac{1}{q} \sum_{N\leq n < 2N}\textbf{1}_{\mathcal{B}}(n)\right| \ll_{f, \mathcal{H}, k, U_\ell} N^{1-\beta' + \theta}.
$$
Taking $\theta = \beta = \frac{\beta'}{2}$ yields
$$
\sum_{q\leq N^{\theta}} \max_{0 \leq c \leq q-1} \left|\sum_{\substack{N\leq n < 2N \\ n \equiv c \bmod q}}\textbf{1}_{\mathcal{B}}(n)-\frac{1}{q} \sum_{N\leq n < 2N}\textbf{1}_{\mathcal{B}}(n)\right| \ll_{f,\mathcal{H},k, U_\ell} N^{1 - \beta} \\
\ll_{f,\mathcal{H},k,U_\ell} \frac{1}{N^{\beta}} \sum_{N\leq n < 2N} \mathbf{1}_{\mathcal{B}}(n).
$$
\end{proof}

Finally, the following result will help us prove Hypothesis \ref{H4} of \cref{maynard} in Section \ref{lastsection}.
\begin{Corollary}{\label{hypothesis4}}
Let the notation and hypotheses be as in \cref{l13}. Then, we have 
    \[
        \sum_{\substack{N \leq n < 2N \\ n\equiv c \bmod q}} \mathbf{1}_{\mathcal{B}}(n) \ll_{f, \mathcal{H}, k, U_\ell} \frac{1}{q} \sum_{N \leq n < 2N}\mathbf{1}_{\mathcal{B}}(n)
    \]
for all $q<N^\theta.$ 
\end{Corollary}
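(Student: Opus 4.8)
The plan is to read the required bound directly off the two asymptotic estimates that were already established \emph{inside} the proof of \cref{l13}. Recall that there, with $\beta' = \min\{\eta,\beta''/2\}>0$ and $\theta = \beta'/2$, one shows both
\[
\sum_{N\le n<2N}\mathbf{1}_{\mathcal{B}}(n) = N\lambda_\ell(U) + O_{f,\mathcal{H},k}\!\left(N^{1-\beta'}\right)
\]
and, for every integer $q$ with $0\le c<q\le N^{\theta}$,
\[
\sum_{\substack{N\le n<2N\\ n\equiv c\bmod q}}\mathbf{1}_{\mathcal{B}}(n) = \#I\,\lambda_\ell(U) + O_{f,\mathcal{H},k}\!\left(N^{1-\beta'}\right), \qquad \#I = \frac{N}{q}+O(1).
\]
The first thing I would note is that since $U = \prod_{i=1}^{\ell}[u_i,v_i)$ with $u_i<v_i$, we have $\kappa:=\lambda_\ell(U) = \prod_{i=1}^{\ell}(v_i-u_i) > 0$; the implied constants are allowed to depend on $U$, hence on $\kappa$.

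Next I would combine the second estimate with $\#I = N/q + O(1)$ and $\lambda_\ell(U)\le 1$ to get that the left-hand side of the corollary equals $\tfrac{N}{q}\kappa + O_{f,\mathcal{H},k}(N^{1-\beta'})$. Since $q\le N^{\theta}=N^{\beta'/2}$, we have $N^{1-\beta'}\le \tfrac{N}{q}\,N^{-\beta'/2}$, so the error term is at most $\tfrac12\tfrac{N}{q}\kappa$ in absolute value once $N\ge N_0$ for a suitable $N_0=N_0(f,\mathcal{H},k,U)$; hence the left-hand side is $\le 2\tfrac{N}{q}\kappa$ for such $N$. On the other hand, the first estimate gives $\sum_{N\le n<2N}\mathbf{1}_{\mathcal{B}}(n)\ge \tfrac12 N\kappa$ for $N\ge N_0$ (enlarging $N_0$ if needed), so $\tfrac{N}{q}\kappa\le \tfrac{2}{q}\sum_{N\le n<2N}\mathbf{1}_{\mathcal{B}}(n)$. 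Chaining these inequalities yields $\sum_{\substack{N\le n<2N,\ n\equiv c\bmod q}}\mathbf{1}_{\mathcal{B}}(n)\le \tfrac{4}{q}\sum_{N\le n<2N}\mathbf{1}_{\mathcal{B}}(n)$ for all $N\ge N_0$, all $q<N^{\theta}$, and all $0\le c<q$, which is exactly the claimed bound.

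Finally I would dispose of the remaining range $h_\ell\le N<N_0$: there are only finitely many such $N$, and for each of them the constraint $q<N^{\theta}$ forces $q$ into a bounded set, so the trivial inequality $\sum_{\substack{N\le n<2N,\ n\equiv c\bmod q}}\mathbf{1}_{\mathcal{B}}(n)\le \sum_{N\le n<2N}\mathbf{1}_{\mathcal{B}}(n)$ already gives the desired estimate after absorbing the bounded factor $q$ into the implied constant (and if $\sum_{N\le n<2N}\mathbf{1}_{\mathcal{B}}(n)=0$, the inequality is the trivial $0\le 0$). There is no genuine obstacle here; the only point needing a little care is that one must actually exploit $\lambda_\ell(U)>0$ so that the main term dominates the error, together with the precise relation $\theta=\beta'/2$ between the modulus range and the discrepancy savings — both of which are already available from \cref{l13}.
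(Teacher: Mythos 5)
Your proposal is correct and follows exactly the paper's own route: the proof of Corollary \ref{hypothesis4} in the paper consists precisely of quoting the two asymptotics $\sum_{N\le n<2N}\mathbf{1}_{\mathcal{B}}(n)=\lambda_\ell(U)N+O(N^{1-\beta'})$ and $\sum_{n\equiv c\bmod q}\mathbf{1}_{\mathcal{B}}(n)=\lambda_\ell(U)\tfrac{N}{q}+O(N^{1-\beta'})$ from the proof of \cref{l13} and concluding. You merely make explicit the details the paper elides (positivity of $\lambda_\ell(U)$, the role of $\theta=\beta'/2$ in absorbing the error, and the finitely many small $N$), all of which check out.
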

\begin{proof}
    By \eqref{equ_no_congruence} and \eqref{equ_congruence}, we have
    \begin{align*}
        \sum_{N\leq n < 2N} \mathbf{1}_{\mathcal{B}}(n) = \lambda_\ell(U_\ell)N + O_{f,\mathcal{H}, k}(N^{1-\beta'})
    \end{align*}
    and
    \begin{align*}
        \sum\limits_{\substack{N\leq n  < 2N \\ n \equiv c \bmod q}} \mathbf{1}_{\mathcal{B}}(n) =  \lambda_\ell(U_\ell) \frac{N}{q} + O_{f, \mathcal{H}, k}(N^{1-\beta'}),
    \end{align*}
    and therefore
    \begin{align*}
        \sum_{\substack{N \leq n < 2N \\ n\equiv c \bmod q}} \mathbf{1}_{\mathcal{B}}(n) \ll_{f, \mathcal{H}, k, U_\ell} \frac{1}{q} \sum_{N \leq n < 2N}\mathbf{1}_{\mathcal{B}}(n).
    \end{align*}
    
\end{proof}

\section{Uniform distribution of values of polynomials with real exponents in arithmetic progressions along primes}\label{discr_primes}
In this section, we prove results that allow us to verify Hypotheses \ref{H1} and \ref{H3} of \cref{maynard} in Section \ref{lastsection}. To do this, we study the distribution of sequences of the form $(\{ F(p+h_1) \}, \ldots, \{ F(p+h_\ell) \})$ in $[0,1)^\ell$, where $p$ runs over the primes in an interval, or the primes in an interval lying in some arithmetic progression. \cref{expsumsoverprimes} and \cref{a(q)} are analogues of \cref{be} and \cref{corbdd} for exponential sums over the primes. Using these results, we prove \cref{Hypothesis 3}, which will help us verify Hypothesis \ref{H3}, and \cref{Hypothesis 1}, which will help us verify Hypothesis \ref{H1}. But first, we prove an easy lemma that is useful in the proof of \cref{expsumsoverprimes}.

\begin{Lemma}\label{supinterval}
    Let $\alpha > 0$, $W \geq 0$. Let $f : \N \rightarrow \R$ be such that $|f(n)| \leq 1$ for all $n \leq 2W$. If
    $$\left| \sum\limits_{N\leq n< 2N}f(n) \right|\leq C N^{\alpha} $$
    uniformly for $N \geq W$, then we have that
$$\sup\limits_U\sup\limits_V\left| \sum\limits_{\substack{N\leq n < 2N \\ U\leq n<V}}f(n) \right|\ll_\alpha C N^{\alpha} + W, $$ 
where the supremums are over all the positive real numbers.
\end{Lemma}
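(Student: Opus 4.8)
The plan is to reduce the sum over the interval $[U,V)\cap[N,2N)$ to a telescoping difference of "prefix sums" of the form $\sum_{N\le n<W'}f(n)$, and then to dyadically decompose each such prefix sum so that the hypothesis applies on each dyadic block. First I would dispose of trivial cases: if the interval $[U,V)\cap[N,2N)$ is empty the bound is trivial, and we may assume $N\le U<V\le 2N$ after intersecting, so it suffices to bound $\left|\sum_{U\le n<V}f(n)\right|$ where $N\le U<V\le 2N$ and $N\ge W$. Writing $S(M):=\sum_{N\le n<M}f(n)$ for $M\in[N,2N]$, we have $\sum_{U\le n<V}f(n)=S(V)-S(U)$, so it is enough to show $|S(M)|\ll_\alpha CN^\alpha+W$ uniformly for $M\in[N,2N]$.

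Next I would bound $S(M)$. The idea is to split $[N,M)$ into the range $[N,N+W)$, which contributes at most $W$ in absolute value since $|f(n)|\le 1$ there (this uses $N\ge W$, so all these indices are $\le 2W$ and the hypothesis $|f(n)|\le 1$ applies — actually all $n<2N\le$ anything is fine as long as we only invoke $|f(n)|\le 1$ for $n\le 2W$, which holds because $n<N+W\le 2N$; here one should be a little careful and simply note the trivial bound holds because the hypothesis $|\sum_{N\le n<2N}f(n)|\le CN^\alpha$ already forces control, or just use that $W$ terms each at most $1$ in absolute value contribute at most $W$), together with the range $[N+W,M)$. On $[N+W,M)\subseteq[W,2N)$ I would apply a standard dyadic decomposition: cover $[N+W,M)$ by $O(\log N)$... no — better, cover it by boundary-aligned dyadic blocks $[2^j,2^{j+1})$ intersected with $[N+W,M)$. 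Since $N+W\ge N$ and $M\le 2N$, the interval $[N+W,M)$ is contained in $[N,2N)$, which meets at most two dyadic blocks $[2^j,2^{j+1})$; on the portion of such a block that lies in $[N+W,M)$ I would again write the sum as a difference of two sums of the shape $\sum_{2^j\le n<\cdot}f(n)=\sum_{2^j\le n<2N'}f(n)$ for appropriate $N'$, each controlled by the hypothesis (with $N$ there replaced by the relevant power of two, which is $\ge W$) by a bound of the form $C(2^j)^\alpha\le C(2N)^\alpha\ll_\alpha CN^\alpha$. Summing the $O(1)$ contributions gives $|S(M)|\ll_\alpha CN^\alpha+W$, as desired.

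The main obstacle — really the only subtlety — is bookkeeping around the endpoints: the hypothesis only gives control of sums $\sum_{N'\le n<2N'}f(n)$ over \emph{full} dyadic blocks with lower endpoint $\ge W$, so one must be careful that after the decomposition every piece is expressed as a difference of two such full-block (or prefix-of-full-block, handled by a further difference) sums with all relevant lower endpoints $\ge W$, and that the number of pieces is $O(1)$ rather than $O(\log N)$ (which is why restricting to $[N,2N)$ at the outset is important — otherwise one picks up a spurious logarithmic factor). Once the reduction to prefix sums $S(M)$ is in place and one observes $[N,2N)$ overlaps only boundedly many dyadic intervals, the estimate follows by the triangle inequality with no real computation.
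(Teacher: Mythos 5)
There is a genuine gap at the heart of your decomposition. The hypothesis controls only sums over intervals of the form $[N',2N')$ with $N'\geq W$, i.e.\ intervals whose right endpoint is exactly twice the left endpoint. After you intersect with a boundary-aligned dyadic block $[2^j,2^{j+1})$, the piece you must bound is a prefix $\sum_{2^j\leq n<c}f(n)$ with $2^j<c<2^{j+1}$ in general, and this is \emph{not} of the form $\sum_{N'\leq n<2N'}f(n)$ for any $N'$: taking $N'=2^j$ forces the right endpoint to be $2^{j+1}$, while taking $2N'=c$ forces the left endpoint to be $c/2\neq 2^j$. Your proposed ``further difference'' only converts the prefix $[2^j,c)$ into the full block $[2^j,2^{j+1})$ minus the suffix $[c,2^{j+1})$, and that suffix is just as uncontrolled as the prefix, so nothing is gained. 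The correct mechanism --- the one the paper uses --- is to telescope \emph{downward from the variable endpoint}: write $\sum_{1\leq n<c}f(n)=\sum_{j=0}^{k-1}\sum_{c/2^{j+1}\leq n<c/2^j}f(n)+\sum_{1\leq n<c/2^k}f(n)$, where $k$ is minimal with $c/2^k\leq 2W$. Each block $[c/2^{j+1},c/2^j)$ is exactly of the hypothesis's shape with $N'=c/2^{j+1}\geq W$ and contributes at most $C(c/2^{j+1})^\alpha$; there are $O(\log(c/W))$ such blocks, but their bounds form a geometric series summing to $O_\alpha(Cc^\alpha)$, so the logarithmic count you were at pains to avoid is harmless. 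Your insistence on $O(1)$ pieces is what pushed you toward a decomposition that cannot be matched to the hypothesis.

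A second, smaller error: you bound the contribution of $[N,N+W)$ by $W$ on the grounds that $|f(n)|\leq 1$ there, but the hypothesis only guarantees $|f(n)|\leq 1$ for $n\leq 2W$, and the integers in $[N,N+W)$ can be far larger than $2W$ (e.g.\ $N=100W$). In the paper's argument the trivial bound is invoked only on the terminal segment $[1,c/2^k)$ with $c/2^k\leq 2W$, where all the integers involved really are at most $2W$ and the bound is justified.
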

\begin{proof}
   Fix any $U, V \in \R$. Without loss of generality, we can assume that $N \leq U < V \leq 2N$. By using the triangle inequality, we get that
    \begin{equation*}
        \left| \sum\limits_{\substack{N\leq n < 2N \\ U\leq n<V}}f(n) \right|=\left| \sum\limits_{\substack{U\leq n<V}}f(n) \right|\leq \left| \sum\limits_{1\leq n<U}f(n) \right|+\left| \sum\limits_{1\leq n<V}f(n) \right|.
    \end{equation*}
    Let $k \geq 0$ be the smallest integer such that $U/2^k \leq 2W$. We have
    \begin{align*}
        \left| \sum\limits_{1\leq n<U}f(n) \right| &\leq \sum_{j=0}^{k-1} \left| \sum\limits_{U/2^{j+1} \leq n< U/2^j}f(n) \right| + \left| \sum\limits_{1 \leq n< U/2^k}f(n) \right|\\
        &\leq C U^\alpha \sum_{j=0}^{k-1} \frac{1}{(2^\alpha)^{j+1}}  + W\\
        &\ll_{\alpha} CN^\alpha + W.
    \end{align*}
    Similarly, $\left| \sum\limits_{1\leq n<V}f(n) \right| \ll_{\alpha} C N^\alpha + W$, so
    \begin{align*}
        \left| \sum\limits_{\substack{N\leq n < 2N \\ U\leq n<V}}f(n) \right| \ll_{\alpha} C N^\alpha + W.
    \end{align*}
\end{proof}

Next, we use the von Mangoldt function and partial summation to compare the sums
\[
    \left| \sum_{N\leq p <2N}e(\langle \mathbf{m}, \bar f(p)\rangle + P(p)) \right|\qquad \text{and} \qquad\left|\sum_{N \leq n < 2N}e(\langle \mathbf{m}, \bar f(n)\rangle + P(n)) \right|.
\]
This allows us to deduce the proof of \cref{expsumsoverprimes} using \cref{lem_BKM+_reformulated} and some properties of the von Mangoldt function (see \cite[Lemma 2.3]{Bergelson2014}). 

\begin{Proposition}\label{expsumsoverprimes}
    Let $k,r\in\N$ and fix some polynomial with non-integer exponents $f(x) = \sum_{j=1}^r d_j x^{\rho_j}$ with $0 < \rho_1 < \rho_2 < \ldots < \rho_r$ and $d_r \neq 0$. Let $\ell = \lfloor \rho_r \rfloor + 1$ and let $\eta$ be the same number as in Section \ref{discr_integers}. Let $\mathcal{H} = \{ h_1, \ldots, h_\ell \}$ be a set of integers with $h_1 < \ldots < h_\ell$, and define $\bar f(n) = (f(n + h_1), \ldots f(n + h_{\ell}))$. There exists a real number $\zeta>0$ depending only on $f$ and $k$ such that uniformly over all real polynomials $P$ of degree at most $k$, all $N\in\N$ and all vectors $\mathbf{m} = (m_1 \ldots m_{\ell}) \in \Z^{\ell} \setminus \{\bar{0}\}$ with $\max\limits_{1 \leq j \leq \ell}|m_j| \leq N^{\eta}$, 
    \begin{equation}\label{eqn_uni_dis_of_gen_poly_in_arith_prog_primes_1}
    \left| \sum_{N\leq p <2N}e(\langle \mathbf{m}, \bar f(p)\rangle + P(p)) \right| \ll_{f,\mathcal{H},k} N^{1-\zeta}.
    \end{equation}
\end{Proposition}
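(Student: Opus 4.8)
The plan is to deduce \cref{expsumsoverprimes} from \cref{be} and \cref{lem_BKM+_reformulated} by dissecting the von Mangoldt function, treating the prime sum as the ``prime version'' of the integer sum in \cref{be}. The first step is to remove the error term coming from \cref{prop_lin_comb}: we write $\langle\mathbf{m},\bar f(x)\rangle=f_1(x)+R(x)$ with $R(x)\ll_{f,\mathcal{H}}x^{\{\rho_r\}+\eta-1}$, where the exponent is negative by our choice of $\eta$; exactly as in the bound on $S_2$ in the proof of \cref{be}, the factor $e(R(n))-1$ is $\ll n^{\{\rho_r\}+\eta-1}$, so its total contribution to $\sum_{N\leq p<2N}e(\langle\mathbf{m},\bar f(p)\rangle+P(p))$ is $\ll_{f,\mathcal{H}}N^{\{\rho_r\}+\eta}=O(N^{1-\zeta})$ for a suitable $\zeta>0$. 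Hence it suffices to bound $\sum_{N\leq p<2N}e(f_1(p)+P(p))$. Writing $1=\log p/\log p$, discarding the contribution of prime powers $p^j$ with $j\geq2$ (which is $O(\sqrt N)$), and applying partial summation, this reduces up to an admissible error to proving
\[
\sup_{N\leq V\leq 2N}\Bigl|\sum_{N\leq n<V}\Lambda(n)e(f_1(n)+P(n))\Bigr|\ll_{f,\mathcal{H},k}N^{1-\zeta_0}
\]
for some $\zeta_0>0$.

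The second step is a Vaughan-type decomposition of $\Lambda$ --- the role of \cite[Lemma 2.3]{bergelson2013uniform} --- which expresses the inner sum as a combination of polylogarithmically many \emph{Type I} sums $\sum_{d\sim D}a_d\sum_{m\in I_d}e(f_1(dm)+P(dm))$ and \emph{Type II} sums $\sum_{d\sim D}\sum_{m\sim M}b_dc_m\,e(f_1(dm)+P(dm))$, with $|a_d|,|b_d|,|c_m|\ll_\varepsilon N^\varepsilon$, each $I_d$ a subinterval of $[N/d,2N/d)$, $DM\asymp N$, the divisor restricted to $D\leq N^\sigma$ in the Type I sums and to a band $N^{\sigma_1}\leq D\leq N^{\sigma_2}$ in the Type II sums, where the exponents $\sigma,\sigma_1,\sigma_2$ are at our disposal through the parameters in Vaughan's identity. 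The key structural input (contained in the proof of \cref{be}) is that the smooth function $f_1(y)=by^{\rho_r-n_0}+\sum_i b_iy^{\rho_i'}$, with $b\asymp_{f,\mathcal{H}}\bigl|\sum_j m_jh_j^{n_0}\bigr|\neq0$, has $|f_1^{(s)}(y)|$ pinched between positive multiples of $|b|y^{\rho_r-n_0-s}$ on each dyadic range for $0\leq s\leq\ell+k+1$ once $N$ is large (this is where $\rho_r\notin\Z$ is used, to make these multiples positive).

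For the Type I sums, the chain rule gives that $m\mapsto f_1(dm)$ has $s$-th derivative $\asymp|b|N^{\rho_r-n_0}(N/d)^{-s}$ on $m\sim N/d$, so \cref{lem_BKM+_reformulated} applies with $X=N/d$ and $g=g_0:=|b|N^{\rho_r-n_0}$, after choosing the parameter $q$ there large enough (depending on $\sigma$, $\ell$, $k$) that $g_0\leq X^{q+2}$ holds with room to spare for all $d\leq N^\sigma$; the subinterval $I_d$ is then handled by reducing to a dyadic range via \cref{supinterval} applied to $m\mapsto e(f_1(dm)+P(dm))$. Summing the resulting power-saving bound against $\sum_{d\sim D}|a_d|\ll_\varepsilon DN^\varepsilon$ yields a power saving. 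For the Type II sums, I would apply Cauchy--Schwarz in $d$, expand the square, dispose of the diagonal and of the near-diagonal terms (where \cref{lem_BKM+_reformulated} does not yet produce cancellation) by trivial estimates, and reduce the remainder to bounding $\sum_{d\sim D}e(f_1(dm_1)-f_1(dm_2)+Q(d))$ for $m_1\neq m_2$, where $Q$ is a real polynomial of degree $\leq k$; the leading term of $d\mapsto f_1(dm_1)-f_1(dm_2)$ equals $d_r\binom{\rho_r}{n_0}\bigl(\sum_j m_jh_j^{n_0}\bigr)\bigl(m_1^{\rho_r-n_0}-m_2^{\rho_r-n_0}\bigr)d^{\rho_r-n_0}$, whose coefficient is nonzero precisely because $x\mapsto x^{\rho_r-n_0}$ is strictly monotone on $(0,\infty)$ and $n_0\neq\rho_r$, so \cref{lem_BKM+_reformulated} again applies. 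Collecting the two families of estimates and optimizing $\sigma,\sigma_1,\sigma_2$ and the auxiliary parameters $q$ so that every contribution beats $N^{1-\zeta_0}$ then gives the claim, with $\zeta$ depending only on $f$ and $k$.

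The step I expect to be the main obstacle is this final balancing act. Unlike in the classical Weyl-sum setting, \cref{lem_BKM+_reformulated} only yields a genuine power saving in a Type I sum $\sum_m e(f_1(dm)+P(dm))$ when $d$ is not too large relative to $N/d$ (otherwise $g_0>X^{q+2}$, or the gain $(g_0/X^{q+2})^{1/(4QK-2K)}$ becomes useless), which forces the Type I range to be short and the Type II band wide; one must then verify that the Type II estimate, together with the trivial treatment of the diagonal and near-diagonal terms, still beats the trivial bound across that whole band, all while tracking that every implied constant depends only on $f$, $\mathcal{H}$ and $k$, and not on $\mathbf{m}$, $P$, or $N$. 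This is exactly the kind of bookkeeping carried out in \cite{bergelson2013uniform}, and adapting it to our family of functions is the technical heart of the argument.
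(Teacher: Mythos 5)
Your proposal is correct and follows essentially the same route as the paper: reduce to a weighted sum over $\Lambda$ by partial summation, decompose via the Vaughan/Heath-Brown-type identity of \cite[Lemma 2.3]{bergelson2013uniform} into Type I and Type II sums, treat Type I with \cref{lem_BKM+_reformulated} applied to $m\mapsto f_1(dm)$ (plus \cref{supinterval} for subintervals), and treat Type II by Cauchy--Schwarz, using that the leading coefficient of $d\mapsto f_1(dm_1)-f_1(dm_2)$ is nonzero for $m_1\neq m_2$. The only cosmetic difference is that you strip off the error term $R$ before the decomposition rather than inside each bilinear sum, and the parameter balancing you flag as the main obstacle is carried out in the paper with the explicit choices $U\asymp N^{1/5}$, $V\asymp N^{1/3}$, $Z\asymp N^{2/5}$.
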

\begin{proof}
We follow the proof of \cite[Proposition 2.1]{Bergelson2014}. Let $S$ denote the left side of \eqref{eqn_uni_dis_of_gen_poly_in_arith_prog_primes_1}. By using partial summation, we have
   \begin{align*}
       S & =\left| \sum\limits_{N\leq p < 2N}e(\langle \textbf{m}, \bar f(p)\rangle + P(p)) \right| \\
       &= \left| \sum\limits_{N\leq p < 2N}\frac{\Lambda(p)}{\log p}e(\langle \textbf{m}, \bar f(p)\rangle + P(p)) \right| 
       \\
       & = \left| \sum\limits_{N\leq n < 2N}\frac{\Lambda(n)}{\log n}e(\langle \textbf{m}, \bar f(p)\rangle + P(n)) - \sum\limits_{N\leq p^k < 2N, k \geq 2}\frac{\Lambda(p)}{\log p}e(\langle \textbf{m}, \bar f(p)\rangle + P(p))\right| 
       \\
       &\leq\left| \sum\limits_{N\leq n < 2N} \frac{\Lambda(n)}{\log n}e(\langle \textbf{m}, \bar f(n)\rangle + P(n))\right| + O(N^{1/2}\log N)
       \\
       &\ll \frac{1}{\log N}\left| \sum\limits_{N\leq n < N_1}\Lambda(n)e(\langle \textbf{m}, \bar f(n)\rangle + P(n))\right| + O(N^{1/2} \log N)
   \end{align*}
  for the integer $N_1\in [N,2N]$ such that
  $$\left| \sum\limits_{N\leq n < N_1}\Lambda(n)e(\langle \textbf{m}, \bar f(n)\rangle + P(n))\right|$$
  is maximal. By Lemma \ref{supinterval} with $W = 0$, to get an upper bound as in \eqref{eqn_uni_dis_of_gen_poly_in_arith_prog_primes_1}, it is sufficient to find such an upper bound for the sum 
   \begin{align*}
       S_1=\sum\limits_{N \leq n < 2N}\Lambda(n)e(\langle \textbf{m}, \bar f(n)\rangle + P(n))
   \end{align*} 
   uniformly for $N \in \N$. To do this, we apply \cite[Lemma 2.3]{Bergelson2014}, with $F(x) = e(\langle \textbf{m}, \bar f(x)\rangle + P(x))\textbf{1}_{N \leq x < 2N}$, $|F(x)| \leq 1 = F_0 $, $U = \frac{1}{4}N^{1/5}$, $V = 4N^{1/3}$, and $Z$ a number in $\frac{1}{2} + \N$ at a minimal distance from $\frac{1}{4}N^{2/5}$. For all $N$ large enough (say $N \geq N_0$), we have $3 \leq U < V < Z < 2N$, $Z \geq 4U^2$, $2N \geq 64 Z^2 U$, $V^3 = 64N$, so the conditions of the lemma are satisfied. Then, by the conclusion of the lemma, we have
   \begin{equation*}
       \left| S_1\right|\ll 1+L \left(\log N\right)^8+K\log N
   \end{equation*}
for 
\begin{align*}
&K =\max_M \sum_{t=1}^{\infty}\tau_3(t) \left|\sum_{\substack{N \leq nt < 2N \\ Z<n\leq M}}e(\langle \textbf{m}, \bar f(nt)\rangle + P(nt)) \right|,\\
&L=\sup_b \sum_{t=1}^{\infty} \tau_4(t)\left|\sum_{\substack{ N \leq nt < 2N \\ U<n<V}}b(n) e(\langle \textbf{m}, \bar f(nt)\rangle + P(nt))\right|,\\
\end{align*}
where the supremum is taken over all arithmetic functions $b(n)$ satisfying $|b(n)|\leq \tau_3(n)$. Fix $\epsilon > 0$. Note that in the expression for $K$, the innermost sum is zero when $t \geq 2N/Z$, so we have
\begin{align*}
    K&=\max_{M} \sum_{t < \frac{2N}{Z}} \tau_3(t) \left|\sum_{\substack{\frac{N}{t} \leq n < \frac{2N}{t}\\Z<n\leq M}}e(\langle \textbf{m}, \bar f(nt)\rangle + P(nt)) \right|\\
    &\ll_\varepsilon N^\varepsilon \sum_{t < \frac{2N}{Z}} \max_{M} \left|\sum_{\substack{\frac{N}{t} \leq n < \frac{2N}{t}\\Z<n\leq M}}e(\langle \textbf{m}, \bar f(nt)\rangle + P(nt)) \right|,
\end{align*}
where we have used the trivial upper bound $\tau_3(t) \ll_\varepsilon t^\varepsilon \ll N^\varepsilon$.\\

As in the proof of \cref{be}, we can write $\langle \textbf{m}, \bar f(x)\rangle = f_1(x) + R(x)$, where $R(x) \ll_{f,\mathcal{H}} x^{\{\rho_r\} - 1 + \eta}$ and $f_1$ is of the form 
\begin{align*}
    f_1(x) = b x^{\rho_r - n_0} + \sum_{i = 1}^{r'} b_{i} x^{\rho_i'},
\end{align*}
where $n_0 \in \{0,\ldots, \ell - 1 \}$, $\rho_1' < \rho_2' < \ldots < \rho_{r'}' < \rho_r - n_0 - \eta$, $b=\left( \sum_{j=1}^\ell m_j h_j^{n_0} \right) d_r \binom{\rho_r}{n_0}$, $|b|,|b_i| \ll_{f, \mathcal{H}} N^\eta$ and $r' \leq \ell r$. By a variant of the argument in \cref{be}, we have
\begin{align*}
    \left|\sum_{\substack{\frac{N}{t} \leq n < \frac{2N}{t}\\Z<n\leq M}}e(\langle \textbf{m}, \bar f(nt)\rangle + P(nt)) \right| \ll_{f, \mathcal{H}} \left|\sum_{\substack{\frac{N}{t} \leq n < \frac{2N}{t}\\ Z < n \leq M}}e(f_1(nt) + P(nt))  \right| + \frac{N^{\frac{1 + \{ \rho_r \}}{2}}}{t}
\end{align*}
for any $M$. To get an upper bound on the sum on the right-hand side, we start by bounding
\begin{align*}
    S_2 := \left|\sum_{\frac{N}{t} \leq n < \frac{2N}{t}}e(\langle \textbf{m}, \bar f(nt)\rangle + P(nt))  \right|.
\end{align*}
As in the proof of \cref{be}, for some constants $0 < c_1 < c_2$ depending only on $f$ and $k$, one has
\begin{align}
\label{eqn_lfq_1}
    c_1 |b| N^{\rho_r - n_0}\left(\frac{N}{t}\right)^{-s} \leq \left|\frac{d^{s}}{d x^s} f_1(tx) \right| \leq c_2  |b| N^{\rho_r - n_0}\left(\frac{N}{t}\right)^{-s}
\end{align}
for any integers $0 \leq s \leq 5(\ell+1) + k + 2$, $1 \leq t < N^{4/5}$ and any real number $N/t \leq x \leq 2N/t$, as long as $N$ is large enough in terms of $f$ and $\mathcal{H}$. Therefore, by applying \cref{lem_BKM+_reformulated} to the function $x \mapsto f_1(xt)$ and with $q = 5(\ell + 1)$, we get that uniformly over the integers $1 \leq t \leq N^{4/5}$,
\begin{align*}
    \left|\sum_{\frac{N}{t} \leq n < \frac{2N}{t}}e(f_1(nt) + P(nt))  \right| &\ll_{f,\mathcal{H},k} \left(\frac{N}{t}\right)^{1-\frac{1}{2^k}} + \frac{N}{t} \left( \frac{(\log(N/t))^k}{|b|N^{\rho_r - n_0}} \right)^{\frac{1}{2^k}}\\
    &\qquad + \frac{N}{t} \left( \frac{|b|N^{\rho_r - n_0}}{(N/t)^{5(\ell + 1) + 2}}  \right)^{\frac{1}{2^{k + 5(\ell+1) + 2} - 2^{k + 1}}}\\
    &\ll_{f,\mathcal{H},k,\varepsilon} \left(\frac{N}{t}\right)^{1-\frac{1}{2^k}} + \frac{N^{1-\frac{\{\rho_r\}}{2^k} + \frac{\varepsilon}{2}}}{t} + \frac{N^{1 - \frac{1 - \{\rho_r\}}{2^{k + 5(\ell+1) + 2} - 2^{k + 1}}}}{t}\\
    &\ll_{f,\mathcal{H},k,\varepsilon} \left( \frac{N}{t} \right)^{1 - \alpha'},
\end{align*}
where $\alpha' = \min \left\{ \frac{1}{2^k}, \frac{\{\rho_r\}}{2^k} - \varepsilon,  \frac{1 - \{\rho_r\}}{2^{k + 5(\ell+1) + 2} - 2^{k + 1}} - \varepsilon \right\}$. For any fixed $t$ large enough in terms of $f$ and $\mathcal{H}$, this estimate holds as long as $N/t \geq N^{1/5}$, so, by applying \cref{supinterval}  with $W = N^{1/5}$, we get
\begin{align*}
    \max_M \left|\sum_{\substack{\frac{N}{t} \leq n < \frac{2N}{t}\\Z<n\leq M}}e(\langle \textbf{m}, \bar f(nt)\rangle + P(nt)) \right| \ll_{f,\mathcal{H},k,\varepsilon} \left( \frac{N}{t} \right)^{1 - \alpha'} + N^{1/5}
\end{align*}
and so
\begin{align*}
    K \ll_{f,\mathcal{H},k,\varepsilon} N^{1-\zeta_1},
\end{align*}
where
\begin{align*}
    \zeta_1 = \min \left\{ \frac{3}{5}\alpha', \frac{1}{5} \right\} - \varepsilon.\\
\end{align*}

Next, we find an upper bound for $L$. Let 
\begin{align*}
    S_3 &= \sum_{t=1}^\infty \tau_4(t) \left| \sum_{\substack{N\leq nt<2N\\U<n<V}}b(n)e(\langle \textbf{m}, \bar f(nt)\rangle+P(nt)) \right|
\end{align*}
for a fixed arithmetic function $b(n)$ satifsying $|b(n)| \leq \tau_3(n)$.
Again, since $\tau_4(n) \ll_\varepsilon n^\varepsilon$, we have
\begin{align*}
    S_3 &\ll_\varepsilon N^{\epsilon} \sum_{t =1}^{\infty} \left| \sum_{\substack{U<n<V\\ \frac{N}{t} \leq n < \frac{2N}{t}}}b(n)e(\langle \textbf{m}, \bar f(nt)\rangle+P(nt)) \right|.
\end{align*}
The innermost sum is zero when $t \leq N/V$ or $t \geq 2N/U$, so, in fact, we have
\begin{align*}
    S_3 \ll_\varepsilon N^{\epsilon} \sum_{\frac{N}{V} < t < \frac{2N}{U}} \left| \sum_{\substack{U<n<V \\ \frac{N}{t} \leq n < \frac{2N}{t}}}b(n)e(\langle \textbf{m}, \bar f(nt)\rangle+P(nt)) \right|.
\end{align*}
By replacing the lower bound $N/V$ with some integer between $N/(2V)$ and $N/V$, the sum over $t$ can be decomposed into $\ll \log N$ sums over $t$ in dyadic intervals $[X,2X)$. Let 
\begin{align*}
S_4 = \sum_{X \leq t < 2X} \left| \sum_{\substack{U<n<V \\ \frac{N}{t} \leq n < \frac{2N}{t}}}b(n)e(\langle \textbf{m}, \bar f(nt)\rangle+P(nt)) \right|
\end{align*}
be one of those sums. Then, by the Cauchy-Schwarz inequality, we have
\begin{align*}
    |S_4|^2& = \left(\sum_{X \leqslant t < 2X} 1 \cdot \left| \sum_{\substack{U<n<V \\ \frac{N}{t} \leq n < \frac{2N}{t}}}b(n)e(\langle \textbf{m}, \bar f(nt)\rangle+P(nt)) \right|\right)^2 
    \\ 
   & \leqslant \left(\sum_{X \leqslant t< 2X} 1^2 \right) \left(\sum_{X \leqslant t < 2X}\left|  \sum_{\substack{U<n<V \\ \frac{N}{t} \leq n < \frac{2N}{t}}}b(n)e(\langle \textbf{m}, \bar f(nt)\rangle+P(nt)) \right|^2 \right) \\
   & \leqslant X \sum_{X \leqslant t < 2X}\left|  \sum_{\substack{U<n<V \\ \frac{N}{t} \leq n < \frac{2N}{t}}}b(n)e(\langle \textbf{m}, \bar f(nt)\rangle+P(nt)) \right|^2 \\
   &= X \sum_{X \leqslant t < 2X} \left(\sum_{\substack{U<n<V \\ \frac{N}{t} \leq n < \frac{2N}{t}}}b(n)e(\langle \textbf{m}, \bar f(nt)\rangle+P(nt)) \right) 
   \\
   &\qquad \qquad \qquad \qquad \qquad \qquad\times\left(\sum_{\substack{U<n<V \\ \frac{N}{t} \leq n < \frac{2N}{t}}}\overline{b(n)}e(-\langle \textbf{m}, \bar f(nt)\rangle-P(nt)) \right) \\
   & \ll X \sum_{X \leqslant t < 2X} \sum_{\substack{U<n<V \\ \frac{N}{t} \leq n < \frac{2N}{t}}}|b(n)|^2
   \\
   &
   + X \left| \sum_{X \leqslant t < 2X} \sum_{\substack{A<n_1,n_2<B \\ n_1 \neq n_2}}b(n_1)\overline{b(n_2)}e(\langle \textbf{m}, \bar f(n_1t)-\bar f(n_2t)\rangle+P(n_1t)-P(n_2t))\right|,
\end{align*}
where $A=\max\left\{U, \lceil N/t \rceil - 1\right\}$ and $B=\min\left\{V, 2N/t\right\}$. By changing the order of summation and using the bound $|b(n)|\leq \tau_3(n)\ll_\varepsilon n^{\varepsilon}\ll_{\varepsilon} N^{\varepsilon}$ for $n \leq 2N$, we get 
\begin{equation}\label{equ_S4}
    |S_4|^2\ll_\varepsilon N^{\epsilon}X\left(N+\sum\limits_{\substack{A'\leq n_1,n_2<B' \\ n_1 \neq n_2}}\left|\sum\limits_{\substack{X\leq t<2X \\ \frac{N}{n_1} \leq  t <  \frac{2N}{n_2}}} e(\langle \textbf{m}, \bar f(n_1t)-\bar f(n_2t)\rangle+P(n_1t)-P(n_2t))\right|\right),
\end{equation}
with $A' = \max \{ U, \lceil N/(2X) \rceil - 1\}$ and $B' = \min\left\{V, 2N/X\right\}$.
For fixed $n_1$ and $n_2 \neq n_1$ in that range, we have that $Q(t)=P(n_1t)-P(n_2t)$ is a polynomial of degree at most $k = \deg P(x)$. Also, we can write 
\begin{align*}
    \langle\textbf{m}, \bar f(n_1t)-\bar f(n_2t)\rangle = f_1(n_1t) - f_1(n_2t) + R(n_1t) - R(n_2t),
\end{align*}
where $f_1$ and $R$ are the same functions as above. As before,
\begin{align*}
    &\left|\sum\limits_{\substack{X\leq t<2X \\ \frac{N}{n_1} \leq  t <  \frac{2N}{n_2}}} e(\langle \textbf{m}, \bar f(n_1t)-\bar f(n_2t)\rangle+P(n_1t)-P(n_2t))\right|\\
    &\qquad\ll_{f,\mathcal{H}} \left|\sum\limits_{\substack{X\leq t<2X \\ \frac{N}{n_1} \leq  t <  \frac{2N}{n_2} }} e(f_1(n_1t) - f_1(n_2t)+P(n_1t)-P(n_2t))\right| + n_2^{\frac{\{\rho_r\} - 1}{2}}X^{\frac{1+\{\rho_r\}}{2}}.
\end{align*}
Now, there are some constants $c_1', c_2'$ depending only on $f$ and $k$ such that 

\begin{align*}
     \left|\frac{\d^{s}}{\d t^s} (f_1(n_1t) - f_1(n_2t)) \right| 
     &\geq c_1 |b(n_1^{\rho_r - n_0} - n_2^{\rho_r - n_0})| X^{\rho_r - n_0} X^{-s}\\
     &\geq c_1'  \left|b\frac{(n_1 - n_2)}{n_1}\right| N^{\rho_r - n_0}X^{-s},\\
\end{align*}

\begin{align*}
    \left|\frac{\d^{s}}{\d t^s} (f_1(n_1t) - f_1(n_2t)) \right| 
    &\leq c_2  |b(n_1^{\rho_r - n_0} - n_2^{\rho_r - n_0})| X^{\rho_r - n_0}X^{-s}\\
    &\leq c_2'  \left|b\frac{(n_1 - n_2)}{n_1}\right| N^{\rho_r - n_0}X^{-s}\\
\end{align*}
for all $0 \leq s \leq 3(\ell+1)+k+2$ as long as $N$ is large enough in terms of $f$ and $\mathcal{H}$, so by applying \cref{lem_BKM+_reformulated} to the function $t \mapsto f_1(n_1t) - f_1(n_2t)$ and with $q = 3(\ell+1)$, we get

\begin{align*}
   & \left|\sum\limits_{X\leq t<2X} e(f_1(n_1t) - f_1(n_2t)+P(n_1t)-P(n_2t))\right|  \\ 
&\quad
  \ll_{f,\mathcal{H},k}  X^{1-\frac{1}{2^{k}}}+ X\left( \frac{n_1(\log X)^k}{|b(n_1 - n_2)| N^{\rho_r - n_0}}\right)^{1/2^k} \\
    & \qquad + X \left( \frac{|b(n_1 - n_2)| N^{\rho_r - n_0}}{n_1X^{3(\ell+1)+2}}\right)^{\frac{1}{2^{k+3\ell+5} - 2^{k+1}}}\\
    &\quad\ll_{f,\mathcal{H},k,\varepsilon} X^{1-\frac{1}{2^{k}}} + X^{1+\epsilon}\left( \frac{n_1}{|n_1 - n_2| N^{\rho_r - n_0}}\right)^{1/2^k} + X N^{ - \frac{1-\{\rho_r\}}{2^{k+3\ell+5} - 2^{k+1}}}.
\end{align*}
Here we used the fact that $X\geq N/2V \gg N^{2/3}\geq N^{1/3}$ and $1 \ll_{f, \mathcal{H}} b\ll_{f,\mathcal{H}} N^\eta < N.$ In fact, the result holds for $X \geq N^{1/3}$, so by applying \cref{supinterval} with $W = N^{1/3}$, we deduce that
\begin{align*}
    &\left|\sum\limits_{\substack{X\leq t<2X \\ \frac{N}{n_1} \leq  t <  \frac{2N}{n_2}}} e(f_1(n_1t) - f_1(n_2t)+P(n_1t)-P(n_2t))\right|\\
    &\qquad\ll_{f,\mathcal{H},k,\epsilon}X^{1-\frac{1}{2^{k}}} + X^{1+\epsilon}\left( \frac{n_1}{|n_1 - n_2| N^{\rho_r - n_0}}\right)^{1/2^k} + XN^{ - \frac{1-\{\rho_r\}}{2^{k+3\ell+5} - 2^{k+1}}}+N^{\frac{1}{3}}.
\end{align*}
Notice that
\begin{align*}
    \sum\limits_{\substack{A'\leq n_1,n_2<B' \\ n_1 \neq n_2}}\left( \frac{n_1}{|n_1 - n_2|}\right)^{1/2^k} &\ll_k \left(\sum\limits_{A'\leq n_1<B'}n_1^{\frac{1}{2^k}}\right)\left(\sum\limits_{1\leq n_3\leq B'}n_3^{-\frac{1}{2^k}}\right)\\
    &\ll_k (B')^2,
\end{align*}
so by inserting these upper bounds into \eqref{equ_S4}, we get

\begin{align*}
    |S_4|^2 &\ll_{f,\mathcal{H}, k , \varepsilon} N^{2\varepsilon} (XB')^2 N^{-\frac{\{\rho_r\}}{2^k}}+N^{\epsilon}(B'X)^2X^{-\frac{1}{2^{k}}}\\
    &\qquad + (B'X)^2 N^{\varepsilon} N^{ - \frac{1-\{\rho_r\}}{2^{k+3\ell+5} - 2^{k+1}}} + XB'N^{\frac{1+\{\rho_r\}}{2}+\epsilon}+XN^{1+\epsilon} + X (B')^2N^{\frac{1}{3} + \varepsilon}\\
    &\ll_{f,k,\mathcal{H}, \epsilon} N^{2-\frac{\{\rho_r\}}{2^k}+2\epsilon}+N^{2 - \frac{1}{3\cdot2^{k-1}} +\epsilon}+N^{2 - \frac{1- \{\rho_r\}}{2^{k+3\ell+5} - 2^{k+1}}+\epsilon}+N^{\frac{3+\{\rho_r\}}{2}+\epsilon}+N^{\frac{5}{3}+\epsilon}\\
     &\ll_{f,k,\mathcal{H},\epsilon} N^{\beta}
\end{align*}
where 
\begin{align*}   
\beta = \max\left\{ 2 - \frac{\{\rho_r\}}{2^{k}} + 2\varepsilon, 2-\frac{1 - \{ \rho_r \}}{2^{k+3\ell+5} - 2^{k+1}} + \epsilon,\frac{3+\{ \rho_r \}}{2} + \varepsilon, \frac{5}{3} + \varepsilon\right\}.
\end{align*}

Here, we used the fact that $B'X\ll N$, $X\gg \frac{N}{V}\gg N^{\frac{2}{3}}$ and $X \ll \frac{N}{U} \ll N^{4/5}$. Note that $\beta < 2$ if $\varepsilon$ is small enough.
Now, summing over all subintervals, we get 
\begin{equation*}
    L\ll_{f,\mathcal{H},k,\epsilon}N^{1-\zeta_2}
\end{equation*}
where $\zeta_2=1-\frac{\beta}{2}-\epsilon.$ For $\epsilon > 0$ small enough and $\zeta:=\min\{\zeta_1,\zeta_2\}$ we get the desired result.\\
\end{proof}

Again, as in \cref{corbdd}, we get the same bound if we restrict the sum to primes in some arithmetic progression.

\begin{Corollary}\label{a(q)}
Under the hypotheses of \cref{expsumsoverprimes}, uniformly over all real polynomials $P$ of degree at most $k$, all $N\in\N$, all $\mathbf{m} = (m_1, \ldots, m_\ell) \in \Z^{k} \setminus\{\bar 0\}$ with $\max\limits_{1 \leq j \leq \ell} |m_j| \leq N^{\eta}$, all $q\in\N$, and all $a\in\{0,1,\ldots,q-1\}$ one has
    \begin{equation}
    \left| \sum_{\substack{N\leq p < 2N\\ p\equiv a\bmod q}}e(\langle \mathbf{m}, \bar f(p)\rangle + P(p)) \right| \ll_{f,\mathcal{H},k} N^{1-\zeta },
    \end{equation}
    where $\zeta$ and the implied constant are the same as in Proposition \ref{expsumsoverprimes}.
\end{Corollary}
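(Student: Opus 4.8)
The plan is to mimic the proof of \cref{corbdd} verbatim, exploiting the uniformity over all real polynomials of degree at most $k$ that is built into \cref{expsumsoverprimes}. First I would expand the indicator of the residue class using additive characters,
\[
\mathbf{1}_{q\Z + a}(n) = \frac{1}{q}\sum_{0 \le t \le q-1} e\!\left(\frac{(n-a)t}{q}\right),
\]
so that
\[
\sum_{\substack{N\le p < 2N\\ p\equiv a\bmod q}} e(\langle \mathbf{m},\bar f(p)\rangle + P(p)) = \frac{1}{q}\sum_{0\le t\le q-1} e\!\left(\frac{-at}{q}\right) \sum_{N\le p<2N} e\!\left(\langle \mathbf{m},\bar f(p)\rangle + P(p) + \frac{tp}{q}\right).
\]
Since $\bigl|e(-at/q)\bigr| = 1$, the triangle inequality bounds the left-hand side by $\frac{1}{q}\sum_{0\le t \le q-1}\bigl|\sum_{N\le p<2N} e(\langle\mathbf{m},\bar f(p)\rangle + P_t(p))\bigr|$, where $P_t(x) := P(x) + \tfrac{t}{q}\,x$.

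Next I would observe that, because $k\ge 1$ in the hypotheses of \cref{expsumsoverprimes}, each $P_t$ is again a real polynomial of degree at most $k$. Hence \cref{expsumsoverprimes} applies to each inner sum with the very same $\zeta$ and the very same implied constant (which depends only on $f$, $\mathcal{H}$, $k$ and in particular not on $t$, $q$, or $a$), giving
\[
\Bigl|\sum_{N\le p<2N} e(\langle\mathbf{m},\bar f(p)\rangle + P_t(p))\Bigr| \ll_{f,\mathcal{H},k} N^{1-\zeta}
\]
uniformly in $t$ and $q$. Summing over the $q$ values of $t$ and dividing by $q$ leaves this bound unchanged, since $\tfrac{1}{q}\sum_{0\le t\le q-1}1 = 1$, which yields the claim.

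There is essentially no obstacle here: the only point requiring care is to check that adjoining the linear term $\tfrac{t}{q}x$ does not raise the degree above $k$ — which is exactly why the hypothesis $k\ge 1$ of \cref{expsumsoverprimes} is needed — and to note that the final estimate is genuinely independent of $q$ and $a$. The whole argument is word-for-word the proof of \cref{corbdd} with \cref{be} replaced by \cref{expsumsoverprimes}.
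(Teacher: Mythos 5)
Your proposal is correct and is essentially identical to the paper's own proof: both expand $\mathbf{1}_{q\Z+a}$ via additive characters, absorb the resulting linear phase $tp/q$ into the polynomial (which stays of degree at most $k$ since $k\ge 1$), and invoke Proposition \ref{expsumsoverprimes} uniformly in $t$. No gaps.
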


\begin{proof}
As in the proof of Corollary \ref{corbdd}, we use the fact that
\[
\mathbf{1}_{q\Z+a}(n)=\frac{1}{q}\sum_{0 \leq t \leq q-1} e\left( \frac{(n-a)t}{q}\right).
\]
Therefore, we have
\begin{align*}
\left| \sum_{\substack{N\leq p < 2N \\ p\equiv a\bmod q}} e(\langle \mathbf{m}, \bar f(p)\rangle+P(p)) \right|
&=
\left| \sum_{N\leq p < 2N } e(\langle \mathbf{m}, \bar f(p)\rangle+P(p)) 1_{q\Z+a}(p) \right|
\\
&=
\left| \sum_{N\leq p < 2N } e(\langle \mathbf{m}, \bar f(p)\rangle+P(p)) \left(\frac{1}{q}\sum_{0 \leq t \leq q-1} e\left( \frac{(p-a)t}{q}\right)\right) \right|
\\
&\leq \frac{1}{q}\sum_{0 \leq t \leq q-1}
\left| \sum_{N\leq p < 2N } e\left(\langle \mathbf{m}, \bar f(p)\rangle+P(p)+\frac{(p-a)t}{q}\right) \right|
\\
&= \frac{1}{q}\sum_{0 \leq t \leq q-1}\left| \sum_{N\leq p < 2N } e\left(\langle \mathbf{m}, \bar f(p)\rangle+P(p)+\frac{tp}{q}\right) \right|.
\end{align*}
The claim now follows from \cref{expsumsoverprimes}.\\
\end{proof}

Now, we prove a version of the Bombieri-Vinogradov theorem (\cref{Bombieri}) for the sets of our interest (cf. \cite[Theorem 2]{shubin2020fractional}). We start by following the steps of the proof of \cref{l13} (\cref{expsumsoverprimes} and \cref{a(q)} play a role analogous to \cref{be} and \cref{corbdd}).

\begin{Lemma}\label{Hypothesis 3}
    Let $k,r \in \N$, and fix some polynomial with non-integer exponents $f(x) = \sum_{j=1}^r d_j x^{\rho_j}$ with $0<\rho_1 < \rho_2 < \ldots  < \rho_r$ and $d_r \neq 0$. Then, there exists some $\theta > 0$ depending only on $f(x)$ and $k$ such that the following holds:
    
    Let $h_1 < h_2 < \ldots < h_\ell$ be integers. Let $P$ be a real polynomial of degree at most k. Let $U_\ell = \prod_{i=1}^\ell [u_i, v_i)$ with $0 \leq u_i < v_i < 1$ for all $i$, and define
    \[\mathcal{B}:=\{n\in\N: (\{f(n+h_1)+P(n+h_1)\}, \ldots, \{f(n+h_\ell)+P(n+h_\ell)\})\in U_\ell\}.\]
    Fix a $D > 0$. Then, for all $N$ large enough depending on all that precedes, we have
    \begin{equation}\label{eqn_to_prove}
    \sum_{q<N^\theta} \max_{(q,a)=1}\left|\sum_{\substack{N\le p < 2N\\p\equiv c\bmod q }}\textbf{1}_{\mathcal{B}}(p)-\frac{1}{\varphi(q)}\sum_{\substack{N\le p< 2N }}\textbf{1}_{\mathcal{B}}(p)\right| \ll_{f,\mathcal{H},D,k,U_\ell} \frac{1}{(\log N)^D} \sum\limits_{\substack{N\le p< 2N }}\textbf{1}_{\mathcal{B}}(p).
    \end{equation}
    
\end{Lemma}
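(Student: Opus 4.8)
The plan is to mimic the proof of the classical Bombieri--Vinogradov theorem (\cref{Bombieri}) but with the indicator $\mathbf 1_{\mathcal B}$ inserted, exploiting that $\mathbf 1_{\mathcal B}(n)$ can be replaced, via the \Erdos--\Turan--Koksma inequality, by a short exponential-polynomial combination whose frequencies are controlled by \cref{expsumsoverprimes} and \cref{a(q)}. Concretely, write $\mathbf{x}_n = (\{f(n+h_1)+P(n+h_1)\}, \ldots, \{f(n+h_\ell)+P(n+h_\ell)\})$, fix $H = \lfloor N^\eta\rfloor$, and approximate $\mathbf 1_U(\mathbf x_n)$ by a trigonometric polynomial. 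The cleanest way to keep error terms under control uniformly in $q$ is to sandwich: for any $\varepsilon_0 > 0$ choose box-approximants $U^- \subset U \subset U^+$ with smooth (or at least Lipschitz) indicator-type weights $\chi^{\pm}$ agreeing with $\mathbf 1_{U^\pm}$ outside an $\varepsilon_0$-collar, whose Fourier coefficients $\widehat{\chi^{\pm}}(\mathbf m)$ decay like $r(\mathbf m)^{-1}$ up to $\|\mathbf m\|_\infty \le H$ with a tail of size $O_\ell(\varepsilon_0 + 1/H)$; this is the standard Vinogradov/Vaaler construction and lets us write
\[
\mathbf 1_{\mathcal B}(n) = \sum_{\|\mathbf m\|_\infty \le H} \widehat{\chi}(\mathbf m)\, e(\langle \mathbf m, \mathbf x_n\rangle) + O\big(\text{collar indicator}\big),
\]
where the collar term, summed over $N \le p < 2N$, contributes $O(\varepsilon_0\, \pi(2N))$ plus lower-order terms (the collar is itself a finite union of boxes of the shape handled by \cref{Hypothesis 3}'s hypotheses, so one can bound its contribution to both the $p\equiv c$ sum and the main term by the same exponential-sum machinery, up to a factor $\varepsilon_0$).

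Next I would split each term $\sum_{N\le p<2N,\ p\equiv c\bmod q}\mathbf 1_{\mathcal B}(p) - \frac{1}{\varphi(q)}\sum_{N\le p<2N}\mathbf 1_{\mathcal B}(p)$ according to the Fourier expansion. The $\mathbf m = \bar 0$ term is exactly $\big(\widehat\chi(\bar 0)\big)\big(\pi(N,2N;c,q) - \frac{1}{\varphi(q)}\pi(N,2N)\big)$, and summing its absolute value over $q < N^\theta$ and maximizing over $c$ with $(c,q)=1$ is handled directly by the Bombieri--Vinogradov theorem (\cref{Bombieri}, applied on the dyadic window $[N,2N)$), giving $\ll_D N/(\log N)^{2D}$, say. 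For each fixed nonzero $\mathbf m$ with $\|\mathbf m\|_\infty \le H = N^\eta$, the corresponding contribution is
\[
\widehat\chi(\mathbf m)\Bigg(\sum_{\substack{N\le p<2N\\ p\equiv c\bmod q}} e(\langle \mathbf m, \bar f(p)\rangle + \langle \mathbf m, \overline P(p)\rangle) - \frac{1}{\varphi(q)}\sum_{N\le p<2N} e(\langle\mathbf m,\bar f(p)\rangle+\langle\mathbf m,\overline P(p)\rangle)\Bigg),
\]
and since $\langle \mathbf m, \overline P\rangle$ is again a real polynomial of degree $\le k$, \emph{both} inner sums are $\ll_{f,\mathcal H,k} N^{1-\zeta}$ by \cref{a(q)} and \cref{expsumsoverprimes}; crucially these bounds are uniform in $q$, $c$ and $\mathbf m$. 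Summing over $q < N^\theta$ (trivially in $q$, costing a factor $N^\theta$), over $0\le c<q$ (the max costs nothing), and over $\mathbf m$ (the weight $\sum_{\|\mathbf m\|_\infty\le H}|\widehat\chi(\mathbf m)| \ll \sum_{\|\mathbf m\|_\infty \le H} r(\mathbf m)^{-1} \ll_\ell (\log H)^\ell \ll_{f,k} N^{\varepsilon_1}$) yields a total bound of $\ll_{f,\mathcal H,k} N^{1-\zeta+\theta+\varepsilon_1}$ from the nonzero frequencies. Choosing $\theta$ and $\varepsilon_1$ small enough in terms of $\zeta$ makes this $\ll N^{1-\zeta/2}$, which is $\ll_{D} N/(\log N)^D$ for every $D$; choosing $\varepsilon_0$ a small negative power of $\log N$ (still fine, since the collar construction only needs $\varepsilon_0 H \ge 1$) makes the collar contribution $\ll \pi(2N)/(\log N)^D$ as well, i.e. $\ll N/(\log N)^{D+1}$. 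Finally, I would invoke the proof of \cref{l13}/\cref{hypothesis4} once more to note $\sum_{N\le p<2N}\mathbf 1_{\mathcal B}(p) \asymp \lambda_\ell(U)\,\pi(N,2N) \asymp_{f,\mathcal H,k,U} N/\log N$ (this needs the case $P$ with the prime-restricted sum, which follows by the same discrepancy-over-primes argument using \cref{expsumsoverprimes}), so that the $N/(\log N)^{D+1}$ bound can be rewritten as $\ll (\log N)^{-D}\sum_{N\le p<2N}\mathbf 1_{\mathcal B}(p)$, as required. One then reads off a single admissible $\theta = \theta(f,k) > 0$ (the minimum of the $\theta$ from Bombieri--Vinogradov, which can be any number in $(0,1/2)$, and the small constant forced by $\zeta$).

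The main obstacle I anticipate is making the Fourier approximation of $\mathbf 1_U$ play nicely with the \emph{uniformity in $q$}: the collar error $\varepsilon_0$ must be taken shrinking (a power of $1/\log N$), which forces the Fourier cutoff to be at least $1/\varepsilon_0$, and one has to check that $H = N^\eta$ comfortably exceeds this — it does, since $\eta$ is a fixed positive constant — and that the collar, being a bounded union of boxes, still has its contribution to the $q$-sum controlled by \cref{a(q)} rather than estimated trivially (a trivial bound of $\varepsilon_0 N$ per box, summed over $q < N^\theta$, would only give $\varepsilon_0 N^{1+\theta}$, which is too weak; instead one applies the same exponential-sum expansion to the collar boxes themselves, picking up $\varepsilon_0 \cdot N^{1-\zeta+\theta+\varepsilon_1}$ plus $\varepsilon_0 \cdot (\text{main term})$, both acceptable). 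A secondary point of care is that \cref{expsumsoverprimes} and \cref{a(q)} are stated with the sum over $p \in [N,2N)$ rather than with an extra $(p,q)=1$ condition; but the condition $(c,q)=1$ together with $p \equiv c \bmod q$ already forces $(p,q)=1$ for $p > q$, so restricting to such $c$ causes no difficulty, and the $1/\varphi(q)$ versus $1/q$ discrepancy is exactly absorbed into the $\mathbf m = \bar 0$ Bombieri--Vinogradov term. With these bookkeeping points handled, the estimate \eqref{eqn_to_prove} follows.
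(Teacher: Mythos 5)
Your proposal is correct and follows essentially the same route as the paper: both reduce the zero Fourier mode to the classical Bombieri--Vinogradov theorem, control the nonzero modes uniformly in $q$, $c$ and $\mathbf m$ via \cref{expsumsoverprimes} and \cref{a(q)}, sum trivially over $q<N^\theta$, and choose $\theta$ small relative to $\zeta$. The only cosmetic difference is that the paper packages the Fourier approximation of $\mathbf 1_U$ through the \Erdos--\Turan--Koksma inequality applied modulus by modulus (obtaining $\sum_{p\equiv c}\mathbf 1_{\mathcal B}(p)=\lambda_\ell(U)\pi(N,2N;c,q)+O(N^{1-\gamma})$ and then differencing), whereas you hand-roll a Vaaler-type expansion and carry the collar term explicitly --- the underlying mechanism and all quantitative inputs are identical.
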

\begin{proof}
Define
    \[ \bar f (x) :=  (f(x +h_1), \ldots, f(x +h_\ell))\]
    and
    \[ \bar P (x) :=  (P(x + h_1), \ldots, P(x  + h_\ell)).\]
By the \Erdos-\Turan-Koksma inequality (\cref{discbd}), the discrepancy mod 1 of 
$(\bar{f}(p)+\bar{P}(p))_{p \in [N,2N)}$ verifies
\begin{align*}
     D\left((\bar{f}(p)+\bar{P}(p))_{p \in [N,2N)\cap \mathbb{P}}\right) \ll_\ell \frac{1}{H} + \sum_{\substack{\mathbf{m} \in \Z^k \setminus \{\bar{0}\} \\ \lVert \mathbf{m} \rVert_\infty \leq H}} \frac{1}{r(\mathbf{m})} \frac{1}{\pi(N,2N)}\left| \sum_{N \leq p < 2N} e(\langle \mathbf{m}, \bar{f}(p)+\bar{P}(p) \rangle) \right| 
\end{align*}
for any $H \in \N.$ Now, observe that
\begin{align*}
    \left| \sum_{N \leq p < 2N} e(\langle \mathbf{m}, \bar{f}(p)+\bar{P}(p) \rangle) \right|& =  \left| \sum_{N \leq p < 2N} e(\langle \mathbf{m}, \bar f(p)\rangle  + \langle \mathbf{m}, \bar P(p)\rangle) \right|\\
    &\ll_{f,\mathcal{H},k} N^{1 - \zeta}
\end{align*}
  by Proposition \ref{expsumsoverprimes}. Moreover, by the prime number theorem, $\frac{1}{\pi(N, 2N)} \ll \frac{\log N}{N}$ and, as in the proof of Lemma \ref{l13}, we also have 
\[
    \sum_{\substack{m \in \Z^k \setminus \{\bar{0}\} \\ \lVert m \rVert_\infty \leq H}}\frac{1}{r(m)} \ll (\log H)^\ell.
    \]
Thus,
\[
D\left((\bar{f}(p)+\bar{P}(p))_{p \in [N,2N) \cap \mathbb{P}}\right)\ll_{f,\mathcal{H},k} \frac{1}{H}+(\log H)^\ell\frac {N^{ - \zeta}}{\log N}.
\]
Taking $H=\lfloor N^\eta \rfloor,$ we get
\[
D\left((\bar{f}(p)+\bar{P}(p))_{p \in [N,2N)\cap \mathbb{P}}\right)\ll_{f,\mathcal{H},k,\gamma} N^{-\gamma}
\]
for all $0<\gamma<\min\{\eta,\zeta\}.$ For concreteness, let $\gamma = \frac{\min\{\eta,\zeta\}}{2}$.
It follows that 
\begin{align*}
\left|\frac{1}{\pi(N, 2N)}\sum_{\substack{N\le p < 2N }}\textbf{1}_{\mathcal{B}}(p)-\lambda_\ell(U_\ell)\right|\leq D\left((\bar{f}(p)+\bar{P}(p))_{p \in [N,2N) \cap \mathbb{P}}\right)\ll_{f,\mathcal{H},k} N^{-\gamma},
\end{align*}
which implies that
\[\sum_{\substack{N\le p < 2N}}\textbf{1}_{\mathcal{B}}(p)=\pi(N, 2N)\lambda_\ell(U_\ell)+O_{f,\mathcal{H},k}(N^{1-\gamma}).\]
Now, consider the sequence $\left(\bar{f}(p)+\bar{P}(p)\right)_{\substack{{p \in [N,2N)\cap \mathbb{P}} \\ p \equiv c\bmod{q}}}$ for integers $0 \leq a < q$. Again, as in the proof of \cref{l13}, using \cref{a(q)}, we get
\[D\left(\left(\bar{f}(p)+\bar{P}(p)\right)_{\substack{{p \in [N,2N)\cap \mathbb{P}} \\ p \equiv c \bmod{q}}}\right)\ll_{f,\mathcal{H},k} N^{-\gamma}.\]
Therefore, we have
\[
\sum_{\substack{N\le p < 2N\\ p \equiv c \bmod q }}\textbf{1}_{\mathcal{B}}(p)= \pi(N , 2N ;c,q)\lambda_\ell(U_\ell)+O_{f,\mathcal{H},k}(N^{1-\gamma}).
\]
Now, the left side of \eqref{eqn_to_prove} is of the form
\begin{equation*}\label{Sum4}
S:=\sum_{q<N^\theta}\max_{(q,c)=1}\left| \lambda_\ell(U_\ell) \pi(N,2N;c,q)-\lambda_\ell(U_\ell)\frac{\pi(N,2N)}{\varphi(q)}+O_{f,\mathcal{H},k}(N^{1-\gamma})\right|
\end{equation*}
The triangle inequality together with \cref{Bombieri} yield
\begin{align*}   
 S &\leq \lambda_\ell(U_\ell)\sum_{q<N^\theta}\max_{(q,c)=1}\left|\pi(2N;c,q)-\frac{\pi(2N)}{\varphi(q)}\right|\\
 &\quad +  \lambda_\ell(U_\ell)\sum_{q<N^\theta}\max_{(q,c)=1}\left|\pi(N;c,q) - \frac{\pi(N)}{\varphi(q)}\right|+O_{f,\mathcal{H},k}(N^{1-\gamma+\theta})\\
 &\ll_{D,U_\ell}\frac{N}{(\log N)^{D+1}}+O_{f,\mathcal{H},k}(N^{1-\gamma+\theta})
\end{align*}
for all $N$ large enough. But since
\[ \frac{N}{(\log N)^{D+1}}\ll\frac{\pi(N,2N)}{(\log N)^D}\ll_{f,\mathcal{H},k,U_\ell}\frac{1}{(\log N)^D} \sum_{\substack{N \le p < 2N }}\textbf{1}_{\mathcal{B}}(p),\]
by choosing, say, $\theta = \frac{\gamma}{2}$, we get the desired result.\\
\end{proof}

Finally, we show a result that will help us obtain Hypothesis \ref{H1} in Section \ref{lastsection}. It will follow from the estimates
\[\sum_{ \substack{N \leq n < 2N \\ n \in \mathbb{P}}} \mathbf{1}_{\mathcal{B}}(n) = \lambda_\ell(U_\ell)  \pi(N, 2N) + O_{f, \mathcal{H},k}(N^{1-\gamma})\]
and
\begin{equation*}
    \sum_{N\leq n < 2N} \mathbf{1}_{\mathcal{B}}(n) = \lambda_\ell(U_\ell)N + O_{f,\mathcal{H}, k}(N^{1-\beta'})
\end{equation*}
obtained in the proofs of \cref{Hypothesis 3} and \cref{l13}, respectively. 

    \begin{Lemma}\label{Hypothesis 1}
  Under the hypotheses of \cref{Hypothesis 3}, for all $0 < \delta < 1$ and $N$ large enough in terms of all that precedes, we have
    \begin{align*}
         \sum_{ \substack{N \leq n < 2N \\ n \in \mathbb{P}}} \mathbf{1}_{\mathcal{B}}(n) \geq \frac{\delta}{\log N} \sum_{ N \leq n < 2N } \mathbf{1}_{\mathcal{B}}(n).
    \end{align*}
    \end{Lemma}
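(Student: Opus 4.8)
The plan is to combine the two displayed asymptotic estimates recalled just before the statement with the prime number theorem on dyadic intervals. Write $c = \lambda_\ell(U) = \prod_{i=1}^\ell (v_i - u_i)$, and observe that $c > 0$ since $u_i < v_i$ for every $i$ by the hypotheses of \cref{Hypothesis 3}. Let $\kappa = \min\{\gamma, \beta'\} > 0$, where $\gamma > 0$ and $\beta' > 0$ are the exponents produced in the proofs of \cref{Hypothesis 3} and \cref{l13}, so that both error terms are $O_{f,\mathcal{H},k}(N^{1-\kappa})$.

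First I would invoke the prime number theorem in the form $\pi(N,2N) = \pi(2N) - \pi(N) = (1+o(1))\,N/\log N$ as $N \to \infty$, which follows immediately from $\pi(x) = (1+o(1))\,x/\log x$. Substituting this into the estimate
\[
\sum_{\substack{N \leq n < 2N \\ n \in \mathbb{P}}} \mathbf{1}_{\mathcal{B}}(n) = c\,\pi(N,2N) + O_{f,\mathcal{H},k}(N^{1-\kappa})
\]
and using that $N^{1-\kappa} = o(N/\log N)$, one gets
\[
\sum_{\substack{N \leq n < 2N \\ n \in \mathbb{P}}} \mathbf{1}_{\mathcal{B}}(n) = (1+o(1))\, c\,\frac{N}{\log N}.
\]

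Next, multiplying the estimate $\sum_{N \leq n < 2N} \mathbf{1}_{\mathcal{B}}(n) = cN + O_{f,\mathcal{H},k}(N^{1-\kappa})$ by $\delta/\log N$ yields
\[
\frac{\delta}{\log N} \sum_{N \leq n < 2N} \mathbf{1}_{\mathcal{B}}(n) = (1+o(1))\, \delta c\,\frac{N}{\log N}.
\]
Comparing the last two displays, the ratio of the left-hand side of the claimed inequality to its right-hand side tends to $1/\delta$, which is strictly larger than $1$ because $0 < \delta < 1$. Hence, for all $N$ large enough in terms of $f$, $\mathcal{H}$, $k$, $U$ and $\delta$, the desired inequality holds. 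There is no real obstacle here: the substantive equidistribution work was already carried out in establishing the two displayed asymptotics, and the only points to verify are that the power-saving error terms $N^{1-\kappa}$ are negligible against the main term $N/\log N$ and that $c = \lambda_\ell(U) > 0$, so that the comparison of main terms is meaningful.
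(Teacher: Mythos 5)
Your argument is correct and is essentially the same as the paper's: both proofs combine the asymptotics $\sum_{N\le p<2N}\mathbf{1}_{\mathcal{B}}(p)=\lambda_\ell(U)\pi(N,2N)+O(N^{1-\gamma})$ and $\sum_{N\le n<2N}\mathbf{1}_{\mathcal{B}}(n)=\lambda_\ell(U)N+O(N^{1-\beta'})$ with the prime number theorem to see that both sides of the inequality are asymptotic to $\lambda_\ell(U)N/\log N$ up to the factor $\delta<1$. Your explicit remark that $\lambda_\ell(U)>0$ is a worthwhile (if minor) point that the paper leaves implicit.
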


    \begin{proof}
        As in the proof of \cref{Hypothesis 3}, for $i = 1, \ldots, \ell$, we can write
          $$ \sum_{ \substack{N \leq n < 2N \\ n\in \mathbb{P}}} \mathbf{1}_{\mathcal{B}}(n) = \lambda_\ell(U_\ell)  \pi(N, 2N) + O_{f, \mathcal{H},k}(N^{1-\sigma'})$$
          for some $\sigma' >0$, so, by the prime number theorem,
        \begin{equation*}
            \sum_{ \substack{N \leq n < 2N \\ n \in \mathbb{P}}} \mathbf{1}_{\mathcal{B}}(n) \sim \lambda_\ell(U_\ell) \frac{N}{\log N}.
        \end{equation*}
        As in the proof of \cref{l13}, we have
          \[\sum_{N\le n\le 2N}\textbf{1}_{\mathcal{B}}(n)=\lambda_\ell(U_\ell)N+O_{f,\mathcal{H},k}(N^{1-\sigma})\]
          for some $\sigma > 0$, so
          \begin{align*}
              \frac{1}{\log N}\sum_{N\le n\le 2N}\textbf{1}_{\mathcal{B}}(n) \sim \lambda_\ell(U_\ell) \frac{N}{\log N}.
          \end{align*}
          Therefore, 
          \begin{align*}
              \lim_{N \to \infty}\frac{ \sum\limits_{ \substack{N \leq n < 2N \\ n \in \mathbb{P}}} \mathbf{1}_{\mathcal{B}}(n)}{\frac{1}{\log N} \sum\limits_{ N \leq n < 2N } \mathbf{1}_{\mathcal{B}}(n)} = 1.
          \end{align*}
          In particular, for all $N$ large enough, the ratio on the left side is bigger than $\delta$.\\
    \end{proof}
\section{Proof of the main theorem}\label{lastsection}
In this section, we prove \cref{main_result}. Fix $F$ as in the statement of \cref{main_result}. We show that for any integer $m$, there is some integer $d \geq m$ as well as integers $h_1 < \ldots < h_d$ such that for any $N$ large enough, there exists some $n$ in the set
\begin{equation}\label{good_set}
    \{ n \in \N : \{F(n+h_j)\} \in U \text{ for } 1 \leq j \leq d \} \cap [N,2N)
\end{equation}
with the property that at least $m$ elements of the set $\{ n + h_1, \ldots, n + h_d \}$ are consecutive primes. An important ingredient in establishing the main results of Sections \ref{discr_integers} and \ref{discr_primes} was that if $\rho$ is the largest exponent appearing in the expression for $F$ and $\ell = \lfloor \rho \rfloor + 1$, then the sequence $(\{F(n+h_1)\}, \ldots, \{F(n+h_\ell)\})$ is uniformly distributed in $[0,1)^\ell$. This, however, does not hold if $\ell$ is replaced by any integer $d > \ell$, because, in that case, we can find a $\Z$-linear combination of $F(n+h_1), \ldots, F(n+h_d)$ that goes to zero as $n$ goes to infinity. In \cref{prop_relations}, we show that we can choose $h_1, \ldots, h_d$ in such a way that for $\ell + 1 \leq j \leq d$, the function $F(n+h_j)$ can be written as a $\Z$-linear combination of $F(n+h_1), \ldots, F(n+h_\ell)$, up to a small error term. This is the key ingredient of our argument, as, for this choice of admissible set, the value of $(\{F(n+h_1)\}, \ldots, \{F(n+h_d)\})$, which is not uniformly distributed in $[0,1)^d$, is completely determined by the value of the vector $(\{F(n+h_1)\}, \ldots, \{F(n+h_\ell)\})$, which is uniformly distributed in $[0,1)^\ell$.
\begin{Remark}\label{rem_notinz}
    This is the reason why we require that the largest exponent of $F$ is not an integer, as opposed to some exponent not being an integer. Indeed, if $F$ is a function of the form $ F(x) = \sum_{i=1}^r d_i x^{\rho_i}$ where $d_1,\ldots,d_r\in\R$, $0 \leq \rho_1 < \ldots < \rho_r,$ $\rho_i \in \mathbb{R}$ for $ 1 \leq i \leq r$ and $i_0$ is the largest integer such that $\rho_{i_0}$ is not an integer and $d_{i_0} \neq 0$, then similar arguments to those of Sections \ref{discr_integers} and \ref{discr_primes} would imply that, for $\ell_0 = \lfloor \rho_{i_0} \rfloor + 1$, the vector $(\{F(x + h_1) \}, \ldots, \{F(x + h_{\ell_0}) \})$ is uniformly distributed in $[0,1)^{\ell_0}$ for every choice of distinct $h_1, \ldots, h_{\ell_0}$, but our argument would fail, as for every integer $h \not \in \{ h_1, \ldots, h_{\ell_0} \}$, we would not be able to express $F(x+h)$ as a $\Z$-linear (or even $\Q$-linear) combination of the $F(x+h_1), \ldots, F(x+h_{\ell_0})$.
\end{Remark}

\begin{Proposition}\label{prop_relations}
    Let $\ell \in \N$. Then, for every $d \geq \ell$, there exists an admissible set $\mathcal{H} = \{ h_1, \ldots, h_d \}$ of $d$ integers such that the following holds:\\
    Let $F(x) = \sum_{i=1}^r d_i x^{\rho_i}$ with $0 \leq \rho_1 < \ldots < \rho_r$ be a polynomial with real exponents, with $\rho_r \not\in \Z, d_r \neq 0$ and $\ell = \lfloor \rho_r \rfloor + 1$. Then, for every $\ell + 1 \leq s \leq d$, there is some vector $(b_{s,1}, \ldots, b_{s,\ell}) \in \Z^\ell$ such that
    \begin{equation*}
        F(x + h_s) = \sum_{j = 1}^\ell b_{s,j} F(x + h_j) + O_{F, \mathcal{H}}(x^{\{ \rho_r \} - 1}).
    \end{equation*}
\end{Proposition}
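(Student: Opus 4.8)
The plan is to choose $\mathcal{H}$ depending only on $\ell$ and $d$ (not on $F$), using a lattice–periodicity argument to force the required integer relations and a divisibility trick to guarantee admissibility. For $h \in \R$ write $\nu(h) = (1, h, h^{2}, \dots, h^{\ell-1})^{\mathsf T} \in \R^{\ell}$. First I would recall, from the computation in the proof of \cref{prop_lin_comb}, that for every integer $h \geq 0$ and every $x$ larger than $h$,
\[
F(x+h) = \sum_{i=1}^{r} d_i \sum_{n=0}^{\ell-1} \binom{\rho_i}{n} h^{n} x^{\rho_i - n} + O_{F,\mathcal{H}}\bigl(x^{\{\rho_r\}-1}\bigr),
\]
the error coming from the Taylor remainders $O(x^{\rho_i-\ell})$, the largest of which is $O(x^{\rho_r-\ell}) = O(x^{\{\rho_r\}-1})$ since $\ell = \lfloor \rho_r\rfloor + 1$. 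The point is that if $(b_{s,1},\dots,b_{s,\ell}) \in \Z^{\ell}$ is \emph{any} tuple satisfying $\nu(h_s) = \sum_{j=1}^{\ell} b_{s,j}\,\nu(h_j)$ -- equivalently $h_s^{n} = \sum_{j=1}^{\ell} b_{s,j} h_j^{n}$ for every $0 \leq n \leq \ell-1$ -- then subtracting the expansions for $F(x+h_s)$ and $\sum_j b_{s,j} F(x+h_j)$ term by term makes the coefficient of every power $x^{\rho_i-n}$ cancel, leaving precisely $F(x+h_s) - \sum_{j=1}^{\ell} b_{s,j}F(x+h_j) = O_{F,\mathcal{H}}(x^{\{\rho_r\}-1})$ (here I use that the finitely many $b_{s,j}$ are bounded in terms of $\mathcal{H}$ alone). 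So the whole problem reduces to producing an admissible $d$-element set $\{h_1,\dots,h_d\}$ with $\nu(h_1),\dots,\nu(h_\ell)$ linearly independent over $\R$ -- so that the $b_{s,j}$ are uniquely determined -- and with $\nu(h_s)$ lying in the lattice $\Lambda := \sum_{j=1}^{\ell} \Z\,\nu(h_j)$ for each $s > \ell$.

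To build such a set, I would let $W = \prod_{p \leq d} p$ and put $h_j := jW$ for $1 \leq j \leq \ell$. The vectors $\nu(W),\dots,\nu(\ell W)$ are the columns of a Vandermonde matrix with distinct nodes, hence linearly independent, and the index $M := [\Z^{\ell} : \Lambda] = \bigl|\det[\nu(W),\dots,\nu(\ell W)]\bigr| = W^{\binom{\ell}{2}} \prod_{1 \leq i < j \leq \ell}(j-i)$ is a positive integer divisible by $W$ when $\ell \geq 2$ (the case $\ell = 1$ is trivial since then $\Lambda = \Z$, and one simply takes $h_s = sW$ for all $s$). The crucial observation is the periodicity $\nu(h) \equiv \nu(h+M) \pmod{\Lambda}$: indeed $M\Z^{\ell} \subseteq \Lambda$ because $|\Z^{\ell}/\Lambda| = M$, and each coordinate $(h+M)^{t} - h^{t}$ is a multiple of $M$, so $\nu(h+M) - \nu(h) \in M\Z^{\ell} \subseteq \Lambda$. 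Since $\nu(h_1) = \nu(W) \in \Lambda$, this gives $\nu(W + kM) \in \Lambda$ for all $k \in \Z$, so I would take $h_{\ell+i} := W + (\ell+i)M$ for $1 \leq i \leq d - \ell$: these are pairwise distinct, each equals the $W$-multiple $W\bigl(1 + (\ell+i)M/W\bigr)$ with $1 + (\ell+i)M/W > \ell$, hence also distinct from $h_1,\dots,h_\ell$, and $\nu(h_{\ell+i}) \in \Lambda$ by construction.

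Finally, admissibility of $\mathcal{H} = \{h_1,\dots,h_d\}$ is immediate: every $h_i$ is a multiple of $W$, so for a prime $p \leq d$ we have $p \mid h_i$ for all $i$ and thus $\prod_{i=1}^{d}(1 + h_i) \equiv 1 \pmod p$, while for a prime $p > d$ the $d$ residues $-h_1,\dots,-h_d$ cannot cover $\Z/p\Z$, so some $n$ avoids all of them. This $\mathcal{H}$ depends only on $\ell$ and $d$, the integers $b_{s,j}$ depend only on $\mathcal{H}$, and the estimate of the first paragraph then finishes the proof. The one genuinely delicate point is achieving \emph{integrality} of the coefficients $b_{s,j}$ simultaneously with admissibility of $\mathcal{H}$; this is exactly what the combination of the congruence $\nu(h) \equiv \nu(h+M) \pmod{\Lambda}$ and the freedom to place all the $h_i$ inside $W\Z$ is designed to overcome.
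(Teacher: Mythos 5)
Your proof is correct, and while it shares the paper's overall skeleton --- reduce the claim via the Taylor expansion of $F(x+h)$ to the purely algebraic condition that $\nu(h_s)$ lies in the $\Z$-span $\Lambda$ of $\nu(h_1),\dots,\nu(h_\ell)$, and take all $h_i$ to be multiples of the primorial $\prod_{p\leq d}p$ to get admissibility --- it handles the one delicate step, integrality of the $b_{s,j}$, by a genuinely different mechanism. The paper sets $h_1=0$, expands the standard basis vectors $e_k$ over the basis $\nu(h_1),\dots,\nu(h_\ell)$ with rational coefficients $p_{k,j}/q_{k,j}$, and then checks by hand that choosing $h_s$ in $q\Z$ with $q=\prod_{k,j} q_{k,j}$ clears all denominators (using $q_{1,1}=1$, $p_{1,j}=0$ for $j\geq 2$, and $q_{k,j}\mid q^{k-1}$ for $k>1$); this yields an explicit formula for $b_{s,j}$. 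You instead invoke the structural fact that $M\Z^{\ell}\subseteq\Lambda$ for $M=[\Z^{\ell}:\Lambda]$ equal to the absolute value of the Vandermonde determinant, deduce the periodicity $\nu(h+M)\equiv\nu(h)\pmod{\Lambda}$ from the congruence $(h+M)^{t}\equiv h^{t}\pmod{M}$, and place the remaining $h_s$ in $h_1+M\Z$. Your route is cleaner and avoids the bookkeeping with the $p_{k,j},q_{k,j}$, at the cost of being less explicit about the values of $b_{s,j}$ --- which is immaterial here, since only their existence and a bound depending on $\mathcal{H}$ alone are used (and you correctly note that the implied constant in the error term absorbs $\sum_j|b_{s,j}|$). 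The supporting details all check out: the index formula for a full-rank integer sublattice, the divisibility $W\mid M$ for $\ell\geq 2$ ensuring the later $h_s$ remain multiples of $W$ and exceed $h_\ell$, the separate (trivial) treatment of $\ell=1$, and the two-case admissibility argument.
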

\begin{proof}
    Again, for $h \in \Q$, write 
    $$\nu(h) = \begin{pmatrix}
        1\\
        h\\
        \vdots\\
        h^{\ell - 1}
    \end{pmatrix} \in \Q^{\ell}.$$
    Let 
    $$h_j = (j-1) \prod_{p \leq d} p$$
    for $j = 1, \ldots, \ell$, so that $0 = h_1 < h_2 < \ldots < h_\ell$ and $\{ h_1, \ldots, h_\ell \}$ is admissible. Then, the vectors $\nu(h_1), \ldots, \nu(h_\ell)$ are all linearly independent, by the same argument as in Proposition \ref{prop_lin_comb}. Therefore, $\nu(h_1), \ldots, \nu(h_\ell)$ form a basis of $\Q^\ell$. In particular, for $k = 1, \ldots, \ell$, if $e_k \in \Q^\ell$ is the vector with a 1 in the $k^{th}$ position and $0$'s in all the other positions, we can write
    \begin{align*}
        e_k = \sum\limits_{j=1}^\ell\frac{p_{k,j}}{q_{k,j}}\nu(h_j), 
    \end{align*}
    where $p_{k,j}\in\Z$, $q_{k,j}\in \N$ and $(p_{k,j},q_{k,j})=1$. Note that $e_1 = \nu(0) = \nu(h_1)$, so $q_{1,1} = 1$ and $p_{1,j} = 0$ for $j \geq 2$.
    
    Denote $q:=\prod\limits_{k,j=1}^\ell q_{k,j}$. Then, for any integer $t$, we have
    \[\nu(qt)=\sum\limits_{k=1}^\ell(qt)^{k-1}e_k=\sum\limits_{k=1}^\ell t^{k-1}q^{k-1}\left(\sum\limits_{j=1}^\ell\frac{p_{k,j}}{q_{k,j}}\nu(h_j)\right)=\sum\limits_{j=1}^\ell\left(\sum\limits_{k=1}^\ell t^{k-1}p_{k,j}\frac{q^{k-1}}{q_{k,j}}\right)\nu(h_j).\]
But $q_{1,1}=1,p_{1,j}=0,$ and $q_{k,j}|q^{k-1}$ for $k>1,$ so $\sum\limits_{k=1}^\ell t^{k-1}p_{k,j}\frac{q^{k-1}}{q_{k,j}}\in\Z$. Hence, by defining, say, 
    $$ h_s =  qs  \prod_{p \leq d} p $$
    for $\ell < s \leq d$, we have that $\{ h_1, \ldots, h_d \}$ is admissible, $h_1 < h_2 < \ldots < h_d$, and we can write 
    \begin{align*}
        \nu(h_s) = \sum_{j=1}^{\ell} b_{s,j} \nu(h_j)
    \end{align*}
    for 
    $$b_{s,j} = \sum\limits_{k=1}^\ell \left(s  \prod_{p \leq d} p\right)^{k-1}p_{k,j}\frac{q^{k-1}}{q_{k,j}} \in \Z.$$
    
    Now, as in the proof of Proposition \ref{prop_lin_comb}, by Taylor's theorem, for $j = 1, 2, \ldots, d$, and $x \geq h_d$, $x \geq 1$, we can write
    \begin{equation}\label{taylor expansion} 
        F(x + h_j) = \sum_{i = 1}^r d_i   \sum_{n=0}^{\ell - 1} \binom{\rho_i}{n} h_j^n x^{\rho_i - n}  + O_{F, \mathcal{H}} \left(x^{\{ \rho_r \} - 1} \right). 
    \end{equation}
    In particular, we have that for $s = \ell + 1, \ell + 2, \ldots, d$,
    \begin{align*}
        F(x + h_s) &= \sum_{i = 1}^r d_i   \sum_{n=0}^{\ell - 1} \binom{\rho_i}{n} h_s^n x^{\rho_i - n}  + O_{F,\mathcal{H}} \left(x^{\{ \rho_r \} - 1} \right)\\
        &= \sum_{i = 1}^r d_i   \sum_{n=0}^{\ell - 1} \binom{\rho_i}{n} \left(\sum_{j = 1}^\ell b_{s,j} h_j^n \right) x^{\rho_i - n}  + O_{F,\mathcal{H}} \left(x^{\{ \rho_r \} - 1} \right)\\
        &= \sum_{j = 1}^\ell b_{s,j} \sum_{i = 1}^r d_i   \sum_{n=0}^{\ell - 1} \binom{\rho_i}{n} h_j^n x^{\rho_i - n}  + O_{F,\mathcal{H}} \left(x^{\{ \rho_r \} - 1} \right)\\
        &= \sum_{j = 1}^\ell b_{s,j} F(x+h_j)  + O_{F,\mathcal{H}} \left(x^{\{ \rho_r \} - 1} \right),
    \end{align*}
    where in the last line we use Equation \eqref{taylor expansion} and the fact that $b_{s,j}$ depends only on $\ell,s,p_{k,j}, q_{k,j}$, which, in turn, depend only on $\mathcal{H}$.\\
    \end{proof}

As a consequence of \cref{prop_lin_comb}, assuming $U$ is of the form $(1- \varepsilon,1) \cup [0,\varepsilon)$ for some $\varepsilon > 0$, then for all $N$ large enough, we can find a subset of \eqref{good_set} that can be shown to satisfy the hypotheses of \cref{maynard} by using the results of Sections \ref{discr_integers} and \ref{discr_primes}.

\begin{Proposition}{\label{shrunkset}}
    Let $F(x) = \sum_{i=1}^r d_i x^{\rho_i}$ with $0 \leq \rho_1 < \ldots < \rho_r$ be a polynomial with real exponents, with $\rho_r \not\in \Z, d_r \neq 0$ and $\ell = \lfloor \rho_r \rfloor + 1$. Let $d \geq \ell$ be an integer and let $\epsilon > 0$ be any real number. For these $d,\ell, F(x)$, let $(b_{s,1}, \ldots, b_{s, \ell}) \in \Z^{\ell}$, $\ell + 1 \leqslant s \leqslant d$ be the vectors obtained in the conclusion of Proposition \ref{prop_relations}. Denote
$$\Delta = \frac{\varepsilon}{2\ell \max \{|b_{s,j}| : \ell < s \leq d, 1 \leq j \leq \ell \}}.$$
Then, there exists an integer $N_0$ such that
    \begin{align*}
    \mathcal{B} := \{ n \in \N : n \geq N_0 , \{ F(n+ h_j) \} \in [0, \Delta) \text{ for } 1 \leq j \leq \ell \} \\
    \subset \{ n \in \N : n \geq N_0 , \lVert F(n+ h_j) \rVert < \varepsilon \text{ for } 1 \leq j \leq d \}.
\end{align*}
\end{Proposition}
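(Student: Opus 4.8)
The plan is to verify membership in the right-hand set pointwise. Fix $n$ in the left-hand set, so $n \geq N_0$ and $\{F(n+h_j)\} \in [0,\Delta)$ for $1 \leq j \leq \ell$; I will show $\lVert F(n+h_j)\rVert < \varepsilon$ for every $1 \leq j \leq d$ once $N_0$ is chosen large enough.

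First I would deal with the indices $1 \leq j \leq \ell$. Since each $b_{s,j}$ is an integer and, reading off the first coordinate of the identity $\nu(h_s) = \sum_{j=1}^\ell b_{s,j}\nu(h_j)$ from \cref{prop_relations} (all these vectors have first entry $1$), we get $\sum_{j=1}^\ell b_{s,j} = 1$; in particular not all of the $b_{s,j}$ vanish, so $\max\{|b_{s,j}| : \ell < s \leq d,\ 1 \leq j \leq \ell\} \geq 1$ and therefore $\Delta \leq \varepsilon/(2\ell) < \varepsilon$. Using the elementary bound $\lVert y \rVert \leq \{y\}$, we conclude $\lVert F(n+h_j)\rVert \leq \{F(n+h_j)\} < \Delta < \varepsilon$ for $1 \leq j \leq \ell$.

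The remaining indices $\ell + 1 \leq s \leq d$ carry the content. Here I would apply \cref{prop_relations} to write $F(n+h_s) = \sum_{j=1}^\ell b_{s,j} F(n+h_j) + E_s(n)$, where $|E_s(n)| \leq C_s\, n^{\{\rho_r\}-1}$ for some constant $C_s = C_s(F,\mathcal{H})$; as there are only finitely many values of $s$, set $C = \max_{\ell < s \leq d} C_s$. Splitting $F(n+h_j) = \lfloor F(n+h_j)\rfloor + \{F(n+h_j)\}$ and discarding the integer $\sum_{j=1}^\ell b_{s,j}\lfloor F(n+h_j)\rfloor$, reduction modulo $1$ gives $\lVert F(n+h_s)\rVert \leq \sum_{j=1}^\ell |b_{s,j}|\,\{F(n+h_j)\} + |E_s(n)|$. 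On the left-hand set $\{F(n+h_j)\} < \Delta$, so the first sum is at most $\Delta\,\ell\,\max\{|b_{s,j}|\} \leq \varepsilon/2$ by the definition of $\Delta$.

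Finally, since $\rho_r \notin \Z$ we have $\{\rho_r\} - 1 < 0$, hence $C\, n^{\{\rho_r\}-1} \to 0$; choose $N_0$ (also larger than $h_d$, so that \cref{prop_relations} applies) with $C\, n^{\{\rho_r\}-1} < \varepsilon/2$ for all $n \geq N_0$. Then for every $n \geq N_0$ in the left-hand set and every $\ell + 1 \leq s \leq d$ we obtain $\lVert F(n+h_s)\rVert < \varepsilon/2 + \varepsilon/2 = \varepsilon$, which together with the case $j \leq \ell$ places $n$ in the right-hand set. I do not expect a genuine obstacle: the only points needing a little care are that the error constant $C$ — and hence $N_0$ — can be taken uniform in $s$ (immediate, as $s$ ranges over a finite set) and the trivial inequality $\lVert y\rVert \leq \{y\}$ used for the first $\ell$ coordinates.
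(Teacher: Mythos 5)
Your proposal is correct and follows essentially the same route as the paper: apply the integer linear relation from Proposition \ref{prop_relations}, discard the integer part modulo $1$, bound each of the $\ell$ terms by $\varepsilon/(2\ell)$ via the definition of $\Delta$, and absorb the $O(x^{\{\rho_r\}-1})$ error into $\varepsilon/2$ by taking $N_0$ large. Your write-up is in fact somewhat more explicit than the paper's (the floor/fractional-part decomposition, the uniformity of the error constant over the finitely many $s$, and the observation that $\sum_j b_{s,j}=1$ forces $\Delta<\varepsilon$ for the indices $j\leq\ell$), all of which is sound.
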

\begin{proof}
    Notice that if 
\begin{align*}
     \{F(n+h_j)\} \in [0, \Delta)
\end{align*}
for $j = 1, \ldots, \ell$, then for every $\ell < k \leq d$ and every $1 \leq j \leq \ell$,
\begin{align*}
    \lVert b_{k,j} F(n+h_j) \rVert < \frac{\varepsilon}{2\ell}.
\end{align*}
Thus, if $n$ is large enough in terms of $F$, the $h_i$'s, $d$, and $\ell$ (say $n \geq N_0$), we have
\begin{align*}
    \lVert F(n+h_k) \rVert <  \sum_{j = 1}^\ell \lVert b_{k,j} F(x+h_j) \rVert + \frac{\varepsilon}{2}< \varepsilon.
\end{align*}
It follows that
\begin{align*}
    \mathcal{B} := \{ n \in \N : n \geq N_0 , \{ F(n+ h_j) \} \in [0, \Delta) \text{ for } 1 \leq j \leq \ell \} \\
    \subset \{ n \in \N : n \geq N_0 , \lVert F(n+ h_j) \rVert < \varepsilon \text{ for } 1 \leq j \leq d \}.
\end{align*}
\end{proof}

\begin{proof}[Proof of Theorem \ref{main_result}]
    First, notice that it suffices to prove the theorem for $U$ a non-empty open set of the form $(1-\varepsilon, 1) \cup [0, \epsilon)$ for some $\varepsilon > 0$. For any other open sets, we can choose a constant $c$ such that $U + c \bmod 1$ contains a set $U'$ of the above form, and apply the theorem to $F(x) + c$ and $U'$. In this case, the relevant set $\mathcal{A}$ takes the form $\mathcal{A} := \{ n \in \N : ||F(n)|| < \varepsilon \}$. Let $d \geq \max\{\ell, 2\}$ be an integer. By \cref{shrunkset}, there exists an admissible set of polynomials $\mathcal{L} = \{ n + h_1, \ldots, n + h_d \}$ such that $h_1 < \ldots < h_d$ and with
    \begin{align*}
        \mathcal{B} := \{ n \in \N : n \geq N_0 , \{ F(n+ h_j) \} \in [0, \Delta) \text{ for } 1 \leq j \leq \ell \} \\
    \subset \{ n \in \N : n \geq N_0 , \lVert F(n+ h_j) \rVert < \varepsilon \text{ for } 1 \leq j \leq d \}
    \end{align*}
    for some $0 < \Delta < \varepsilon$ and some integer $N_0 \in \N$. We show that $\mathcal{B}$ together with the set of admissible polynomials $\mathcal{L}$ satisfy the hypotheses of \cref{maynard} for $B = \alpha = 1$ and some $0 < \theta < 1$ depending only on $F$.\\
    
    For Hypothesis \ref{H1}, let $\delta = 0.98 > 1/\log d$. Let $h$ be any element of $\{h_1, \ldots h_d\}$. Then, notice that $n \in \mathcal{B}$ if and only if $n+h$ is in 
$$
\mathcal{C} = \{ n \in \N : n \geq N_0 + h , \{ F(n - h + h_j) \} \in [0, \Delta) \text{ for } 1 \leq j \leq \ell \}.
$$
Therefore, by \cref{Hypothesis 1} applied to $h$, for all $N$ large enough in terms of $F$ and $\mathcal{L}$, we have that
\begin{align*}
    \sum_{ \substack{N \leq n < 2N \\ n+ h \in \mathbb{P}}} \mathbf{1}_{\mathcal{B}}(n) &= \sum_{ \substack{N + h \leq n < 2N + h \\ n \in \mathbb{P}}} \mathbf{1}_{\mathcal{C}}(n)\\
    &\geq \frac{0.99}{\log N} \sum_{ N \leq n < 2N } \mathbf{1}_{\mathcal{C}}(n) - h\\
    &\geq \frac{0.99}{\log N} \sum_{ N \leq n < 2N } \mathbf{1}_{\mathcal{B}}(n) - \left( 1 + \frac{0.99}{\log N} \right) h\\
    &\geq \frac{\delta}{\log N} \sum_{ N \leq n < 2N } \mathbf{1}_{\mathcal{B}}(n).
\end{align*}
In particular, we have that
\begin{align*}
    \frac{1}{d} \sum_{j = 1}^d \sum_{ \substack{N \leq n < 2N \\ n+ h_j \in \mathbb{P}}} \mathbf{1}_{\mathcal{B}}(n) \geq \frac{\delta}{\log N} \sum_{ N \leq n < 2N } \mathbf{1}_{\mathcal{B}}(n)
\end{align*}
as long as $N$ is large enough.\\

For Hypotheses \ref{H2} and $\ref{H3}$, it follows from \cref{l13} and \cref{Hypothesis 3} that there are some $0 < \theta_1, \theta_2 < 1$ depending only on $F$ such that
\begin{align*}
    \sum_{q\leq N^{\theta_1}} \max_{0 \leq c \leq q-1} \left|\sum_{\substack{N\leq n < 2N \\ n \equiv c \bmod q}}\mathbf{1}_{\mathcal{B}}(n)-\frac{1}{q} \sum_{N\leq n < 2N}\mathbf{1}_{\mathcal{B}}(n)\right| \ll_{F, d, \varepsilon} \frac{1}{(\log N)^{101d^2}}\sum_{N\leq n < 2N}\mathbf{1}_{\mathcal{B}}(n)
\end{align*}
and, for every $h \in \{ h_1, \ldots, h_d \}$,
\begin{align*}
    &
    \sum_{q<N^{\theta_2}} \max_{(q,a + h)=1}\left|\sum_{\substack{N\le n < 2N\\n\equiv a\bmod q \\ n+h \in \P}}\textbf{1}_{\mathcal{B}}(n)-\frac{1}{\varphi(q)}\sum_{\substack{N\le n< 2N \\ n + h  \in \P}}\textbf{1}_{\mathcal{B}}(n)\right|\\
    &\qquad \ll
    \sum_{q<N^{\theta_2}} \max_{(q,a)=1}\left|\sum_{\substack{N\le n < 2N\\n\equiv a\bmod q \\ n \in \P}}\textbf{1}_{\mathcal{C}}(n)-\frac{1}{\varphi(q)}\sum_{\substack{N\le n< 2N \\ n  \in \P}}\textbf{1}_{\mathcal{C}}(n)\right| + h N^{\theta_2}\\
    &\qquad\ll_{F,d,\varepsilon} \frac{1}{(\log N)^{101d^2}} \sum\limits_{\substack{N\le n< 2N \\ n \in \P}}\textbf{1}_{\mathcal{C}}(n) +  N^{\theta_2}\\
    &\qquad\ll_{F,d, \varepsilon}  \frac{1}{(\log N)^{101d^2}} \sum\limits_{\substack{N\le n< 2N \\ n + h \in \P}}\textbf{1}_{\mathcal{B}}(n),
\end{align*}
where, for the last step, we have used the fact that $\theta_2 < 1$ and the fact that
\begin{align*}
    \sum\limits_{\substack{N\le n< 2N \\ n + h \in \P}}\textbf{1}_{\mathcal{B}}(n) \gg_{F,d,\varepsilon} \pi(N,2N),
\end{align*}
which follows from the proof of \cref{Hypothesis 3}. The result obviously also holds if $\theta_1$ and $\theta_2$ are replaced by $\theta = \min \{ \theta_1, \theta_2 \}$. This $\theta$ is again dependent on $F$ only, and, in particular, is independent of the choice of $d$.\\

Finally, Hypothesis \ref{H4} is \cref{hypothesis4}.\\

It follows from the conclusion of \cref{maynard} that there exists some constant $C$ depending only on $\theta$, and, therefore, only on $F$, such that for all $d$ and $N$ large enough, the number of $n \in \mathcal{B} \cap [N,2N)$ such that the set $\{ n + h_1, \ldots, n + h_d \}$ contains a string of at least $\lceil 0.98 C^{-1} \log d \rceil$ consecutive primes is
\begin{align*}
    \gg  \frac{\sum_{N \leq n < 2N} 1_{\mathcal{B} }(n)}{(\log N)^d \exp(Cd^5)},
\end{align*}
where the implied constant may depend on $F$ and $\varepsilon$, but not on $N$ and $d$. It follows from the proof of \cref{l13} that
\begin{align*}
    \sum_{N \leq n < 2N} 1_{\mathcal{B} } (n)\gg_{F, d, \varepsilon} N
\end{align*}
for all $N$ large enough, so, for any fixed $d$, for all $N$ large enough in terms of $F$, $d$ and $\varepsilon$, there will be at least one $n \in \mathcal{B} \cap [N,2N)$ such that the set $\{ n + h_1, \ldots, n + h_d \}$ contains a string of at least $\lceil 0.98 C^{-1} \log d \rceil$ consecutive primes, and, by our choice of $\mathcal{B}$, they are all contained in the set $\mathcal{A}$. Therefore, for any $m \in \N$, by taking $d$ large enough (so that all the conditions in \cref{maynard} are satisfied) and with $\lceil 0.98 C^{-1} \log d \rceil \geq m$, there are infinitely many strings of $m$ consecutive primes of bounded length (depending on $F$ and $m$) inside $\mathcal{A}$.

\end{proof}



\bibliographystyle{alpha}
\bibliography{biblio}

\bigskip
\footnotesize
\noindent
Sai Sanjeev Balakrishnan\\
\textsc{Department of Mathematics, \\ University of California, 
 Berkeley, \\ Evans Hall, Berkeley, California, \\ United States of America - 94720-3840}\\
\href{mailto:ssb55@cam.ac.uk}
{\texttt{saisanjeev@berkeley.edu}}
\\ \\
Félix Houde\\
\textsc{Department of Mathematics and Statistics,}\\
\textsc{Concordia University,}\\
\textsc{1455 de Maisonneuve Blvd West, Montreal, Quebec, Canada, H3G 1M8}\\
\href{mailto:f_houd@live.concordia.ca}
{\texttt{f\_houd@live.concordia.ca}}
\\ \\
Vahagn Hovhannisyan\\
\textsc{Faculty of Mathematics and Mechanics,}\\
\textsc{Yerevan State University,}\\
\textsc{1 Alek Manukyan St, Yerevan, Armenia,}\\
\href{mailto:vahagn.hovhannisyan3@edu.ysu.am}
{\texttt{vahagn.hovhannisyan3@edu.ysu.am}}
\\ \\
Maryna Manskova\\
\textsc{Institute of Analysis and Number Theory,\\}
\textsc{Graz University of Technology,}\\
\textsc{Steyrergasse 30, 8010 Graz, Austria,}\\
\href{mailto:maryna.manskova@tugraz.at}
{\texttt{maryna.manskova@tugraz.at}}
\\ \\
Yiqing Wang \\
\textsc{Department of Mathematics,}\\
\textsc{University of Wisconsin–Madison,}\\
\textsc{480 Lincoln Dr, Madison, WI 53706, USA}\\
\href{mailto:wang3498@wisc.edu}{\texttt{wang3498@wisc.edu}} \\ 


\end{document}